\documentclass[a4paper,10pt]{amsart}
\usepackage[margin=1in]{geometry}
\usepackage{enumerate, amsmath, amsfonts, amssymb, amsthm, mathtools, thmtools, wasysym, graphics, graphicx, xcolor, frcursive, xparse, comment, ytableau, stmaryrd, bbm, array, colortbl, tensor, arydshln, leftidx, mathrsfs, hyperref, cleveref, manfnt, array, caption, thm-restate}
\definecolor{darkblue}{rgb}{0.0,0,0.7} 

\usepackage{tikz}
\usetikzlibrary{arrows.meta}
\usetikzlibrary{calc}
\usetikzlibrary{shapes.geometric}
\usetikzlibrary{fit}
\usetikzlibrary{backgrounds}

\newcommand\Matching[2]{%
  \begin{tikzpicture}[baseline=-0.65ex,scale=0.5, 
  every node/.style={text height=1.5ex, text depth=0.25ex}]
    \foreach \x [count=\xi] in {a,...,z}{
       \draw[circle,fill] (\xi,0)circle[radius=1mm]node[below]{$\x$};
        \ifnum \xi=#1
            \breakforeach
        \fi
    }
    \foreach \x/\y in {#2} {
       \pgfmathsetmacro{\Radius}{\y/2-\x/2}
       \draw(\x,0) arc[radius=\Radius, start angle=180, end angle=0];
    }
  \end{tikzpicture}%
}

\tikzset{
  vtx/.style={circle,draw,line width=.7pt,fill=white,inner sep=0pt,minimum size=6.5pt},
  e/.style={line width=.7pt},
  mult/.style={midway,fill=white,inner sep=2pt,font=\small},
  tag/.style={font=\footnotesize}
}

\newtheorem{theorem}{Theorem}[section]
\newtheorem{proposition}[theorem]{Proposition}
\newtheorem{corollary}[theorem]{Corollary}
\newtheorem{lemma}[theorem]{Lemma}
\newtheorem{example}[theorem]{Example}
\newtheorem{conjecture}[theorem]{Conjecture}
\newtheorem{remark}[theorem]{Remark}

\newcommand{\QQ}{\mathbb Q}
\newcommand{\RR}{\mathbb R}

\newcommand{\ZZ}{\mathbb Z}

\newcommand{\perm}{\operatorname{Perm}}
\newcommand{\defn}[1]{\textcolor{darkblue}{\emph{#1}}}

\newcommand{\tope}{\mathsf{P}}
\newcommand{\lpts}[1]{\mathsf{L}(#1)} 
\newcommand{\cube}{\mbox{\mancube}}
\newcommand{\hilb}{\operatorname{Hilb}}
\newcommand{\suchthat}{\;|\;}

\newcommand{\gri}[1]{\operatorname{gr}I(#1)}
\newcommand{\image}{\operatorname{Im}}
\newcommand{\grFrob}[1]{\operatorname{grFrob}(#1)} 
\newcommand{\Frob}[1]{\operatorname{Frob}(#1)} 

\newcommand{\reg}{\operatorname{reg}}

\numberwithin{equation}{section}

\title{Graded Ehrhart Theory for Hypersimplices}

\author{Nathaniel Libman \and Weston Miller}

\begin{document}
\begin{abstract}
    We prove that the $q$-Ehrhart series of a hyperplane slice of a cube is a rational function with an explicit denominator that satisfies $q$-reciprocity, confirming a conjecture of Reiner and Rhoades for these polytopes. To do this, we find a generating set for the orbit harmonics ideal, which also yields the Hilbert series and graded Frobenius characteristic of the associated quotient. We further show that the harmonic algebra of a hypersimplex $\Delta$ is generated as an algebra by the harmonic space of $\Delta$ using structural results on 2-factors of regular multigraphs. In particular, the harmonic algebra is finitely generated, giving a second proof of rationality.
\end{abstract}

\maketitle

\section{Introduction}

\subsection{Ehrhart theory}

One of the main objects of study in Ehrhart theory is the \defn{Ehrhart series} of a lattice polytope $\tope$. 
This power series is defined as 
\[
    E_\tope(t) = \sum_{m \geq 0} i_\tope(m) t^m,
\]
where $i_\tope(m)$ is the number of lattice points in the $m$th dilate of $\tope$, denoted $m\tope = \{mp : p \in \tope\}$.
The original work of Ehrhart \cite{ehrhart} shows that $E_\tope(t)$ is a rational function in $t$, for any lattice polytope $\tope$. There is an analogous series
\[
    \overline{E}_\tope(t) = \sum_{m \geq 0} \overline{i}_\tope(m) t^m,
\]
where $\overline{i}_\tope(m)$ is the number of lattice points in the relative interior of the $m$th dilate of $\tope$, denoted $\operatorname{int}(m\tope)$. This series is also a rational function and is determined by $E_\tope(t)$ via \defn{Ehrhart-Macdonald reciprocity}:
\[
    \overline{E}_{\tope}(t) = (-1)^{d+1}E_\tope(1/t),
\]
where $d$ is the dimension of $\tope$.

The Ehrhart series can be realized as the Hilbert series of the associated \defn{affine semigroup ring} $A_\tope = \mathbb{Q}[\mathbb{Z}^{d+1} \cap \operatorname{cone}(\tope)]$. The rationality of $E_\tope(t)$ follows from $A_\tope$ being a finitely generated $\mathbb{Q}$-algebra, and reciprocity follows from $A_\tope$ being Cohen-Macaulay and the \defn{interior ideal} $\overline{A}_\tope = \mathbb{Q}[\mathbb{Z}^{d+1} \cap \operatorname{int}\operatorname{cone}(\tope)]$ being the canonical module of $A_\tope$. It follows from the definition of $A_\tope$ that $A_\tope$ is generated in degree 1 when $\tope$ has the \defn{integer decomposition property} (IDP), that is when
\[
    \mathbb{Z}^d \cap m\tope = \underbrace{(\mathbb{Z}^d \cap \tope) + \cdots + (\mathbb{Z}^d \cap \tope)}_{m \text{ times}}.
\]

In \cite{reinerrhoades}, the authors define a \defn{$q$-Ehrhart series}
\begin{equation} \label{eq:q-ehr}
    E_\tope(t, q) = \sum_{m \geq 0} i_{\tope}(m;q) \cdot t^m,
\end{equation}
and
\[
    \overline{E}_\tope(t,q) = \sum_{m \geq 0} \overline{i}_{\tope}(m;q) \cdot t^m
\]
where $i_\tope(m;q)$ (respectively $\overline{i}_\tope(m;q)$) is the Hilbert series of the orbit harmonics quotient corresponding to the lattice points in the $m$th dilate of $\tope$ (respectively in the relative interior of the $m$th dilate of $\tope$). In particular $i_\tope(m;1) = i_\tope(m)$ (respectively $\overline{i}_\tope(m;1) = \overline{i}_\tope(m)$). 

The \textit{orbit harmonics quotient} of a finite set of points $Z$ is the graded ring $R(Z) := S/\gri{Z}$, where 
\[
    \gri{Z} := \langle \text{top degree homogeneous component of } f : f(z) = 0 \text{ for all } z \in Z\rangle
\]
is the \textit{orbit harmonics ideal}. This ring has $\dim R(Z) = |Z|$ and will be explored more in \Cref{sec:back}.

Reiner and Rhoades make the following conjecture for $q$-Ehrhart series:
\begin{conjecture}[\cite{reinerrhoades}, Conjecture 1.1]
\label{conj:series}
Let $\tope$ be a $d$-dimensional lattice polytope in $\mathbb{R}^n$. Then
\begin{enumerate}
    \item[(i)] $E_\tope(t,q)$ and $\overline{E}_\tope(t,q)$ are rational functions in $t$ and $q$ expressible over the same denominator $\prod_{i=1}^\nu (1-q^{a_i}t^{b_i})$ with $\nu \geq d + 1$ and numerators in $\mathbb{Z}[t,q]$.
    \item[(ii)] If $\tope$ is a lattice simplex and $\nu = d + 1$, then both numerators have nonnegative coefficients as polynomials in $t,q$.
    \item[(iii)] There is a $q$-analogue of Ehrhart-Macdonald reciprocity:
    \[
        q^d \overline{E}_\tope(t,q) = (-1)^{d+1}E_\tope(1/t,1/q).
    \]
\end{enumerate}
\end{conjecture}

In \cite{reinerrhoades} the authors define the \textit{harmonic algebra} $\mathcal{H}_\tope$ using Macaulay inverse systems. This algebra is an analogue of the affine semigroup ring $A_\tope$ and will be precisely defined in \Cref{sec:back}. The bigraded Hilbert series of $\mathcal{H}_\tope$ is the $q$-Ehrhart series $E_\tope(t,q)$, and the rationality of $E_\tope(t,q)$ and $q$-reciprocity are implied by algebraic properties of $\mathcal{H}_\tope$.

\begin{conjecture}[\cite{reinerrhoades}, Conjecture 5.5]
\label{conj:alg}
     For a lattice polytope $\tope$, the harmonic algebra $\mathcal{H}_\tope$ is
     \begin{enumerate}
         \item[(i)] a finitely generated $\mathbb{Q}$-subalgebra of $\mathbb{Q}[x_0,x_1,\dots,x_n]$,
         \item[(ii)] a Cohen-Macaulay algebra, and
         \item[(iii)] its canonical module is isomorphic to $\overline{\mathcal{H}}_\tope$, up to a shift in grading.
     \end{enumerate}
\end{conjecture}

\subsection{The state of the conjecture}

Part (i) of \Cref{conj:alg} was disproved by Cavey \cite{cavey2025graded} for the lattice triangle with vertices $(0,0),(7,56),(-45,30)$ using connections to geometry and Mori dream spaces. No counterexample to \Cref{conj:series} has appeared in the literature, but the algebraic approach suggested by \Cref{conj:alg} will not work for all polytopes. 

Reiner and Rhoades prove \Cref{conj:series} for all one-dimensional lattice polytopes, antiblocking polytopes, standard simplices, cross-polytopes, cubes, order polytopes, and chain polytopes of finite posets \cite{reinerrhoades}. Recently, Crowley and Partida proved \Cref{conj:alg} for unimodular zonotopes \cite{crowley2026graded}.

\subsection{Why hypersimplices?}

For positive integers $\ell,n$ with $\ell \leq n$, the \defn{$(\ell,n)$-hypersimplex} is
\[
    \Delta_{\ell,n} := \left\{x \in [0,1]^n : \sum_{i=1}^n x_i = \ell \right\}.
\]
The Ehrhart polynomial of $\Delta_{\ell,n}$ was originally found by Katzman in \cite{Katzman01032005}:
\[
    i_{\Delta_{\ell,n}}(m) = \sum_{j=0}^{\ell-1} (-1)^j \binom{n}{j} \binom{(\ell - j)m - j + n-1}{n-1}.
\]
Since then, hypersimplices have been a central object of study in Ehrhart theory. In \cite{early2017conjectures}, Early conjectured a combinatorial interpretation of the $h^*$-vector of the hypersimplex using decorated ordered set partitions. This conjecture was proven by Kim in \cite{KIM2020105213}. In \cite{FERRONI2021105365}, Ferroni proved that hypersimplices are Ehrhart positive, settling a case of \cite[Conjecture 2]{DeLoera2009}. In \cite{Clarke_Kolbl_2025}, the authors studied the equivariant Ehrhart theory of hypersimplices, including an interpretation in terms of decorated ordered set partitions under the action of $\mathfrak{S}_n$.

Unlike general lattice polytopes, hypersimplices are stable under the $\mathfrak{S}_n$-action permuting coordinates. So the orbit harmonics quotient of each dilate is a graded $\mathfrak{S}_n$-module, and assembling these modules makes the harmonic algebra a bigraded $\mathfrak{S}_n$-algebra. So for hypersimplices, the coefficients of the $q$-Ehrhart series upgrade from integers to representations of $\mathfrak{S}_n$. In \Cref{sec:frob}, we compute the graded Frobenius characteristic of the orbit harmonics quotient for each dilate. 

For much of the paper, we will consider the larger family of cube slices 
\[
    \tope_{\ell,m,n} := \left\{z \in [0,m]^n \mid z_1 + \cdots + z_n = \ell \right\}.
\]
Dilations and relative interiors of hypersimplices are inside this family. That is, 
\[ 
    m\Delta_{\ell,n} = \tope_{\ell m,m,n} \quad \text{and} \quad \operatorname{int}(m\Delta_{\ell,n}) \cap \mathbb{Z}^n \cong \tope_{\ell m - n,m-2,n} \cap \mathbb{Z}^n.
\]
Since $\tope_{\ell,m,n}$ and $\tope_{nm - \ell,m,n}$ are affine equivalent, we often assume that $\ell \leq \ell_{\max} = \lfloor nm/2 \rfloor$.

\subsection{Main results}

In this paper, we prove \Cref{conj:series} parts (i) and (iii) for cube slices and state an explicit denominator for the $q$-Ehrhart series.

\begin{restatable}{introtheorem}{thmrational}
\label{thm:rational}
    For $1 \leq \ell \leq \ell_{\max}$, $E_{\tope_{\ell,m,n}}(t,q)$ is rational with denominator 
    \[
        (1-q^\ell t)^{n-1} \prod_{j=0}^{\lceil \ell/m \rceil -1} (1-q^{jm}t),
    \]
    numerator in $\mathbb{Z}[t,q]$, and $q^{n-1}\overline{E}_{\tope_{\ell,m,n}}(t,q) = (-1)^n E_{\tope_{\ell,m,n}}(1/t,1/q)$. If $n \geq 3$ and $2\ell = nm$, the exponent $n-1$ on $(1-q^\ell t)$ improves to $n-2$.
\end{restatable}

Starting from an explicit form of the $q$-Ehrhart series, the proof is elementary and only involves re-indexing sums and generating function techniques. Part (ii) of \Cref{conj:series} is vacuous unless $\tope_{\ell,m,n}$ is a simplex, which only happens for $\ell \leq m$, in which case $\tope_{\ell,m,n} = \ell \Delta_{1,n}$. Reiner and Rhoades already handled $\Delta_{1,n}$ explicitly \cite[Proposition 3.27]{reinerrhoades}. Moreover, $\ell \Delta_{1,n}$ is affine equivalent to the antiblocking polytope $\ell \operatorname{Pyr}(\Delta_{n-2}) = \{y \geq 0 : \sum_{i < n} y_i \leq \ell, \, y_n = 0\}$, which Reiner and Rhoades showed satisfies \Cref{conj:series} \cite[Corollary 3.17]{reinerrhoades}.

We also prove that the harmonic algebra for hypersimplices is generated in degree 1. This fact proves part (i) of \Cref{conj:alg} for hypersimplices, giving an alternate proof of rationality.

\begin{restatable}{introtheorem}{thmgendegone}
\label{thm:gendegone}
    $\mathcal{H}_{\Delta_{\ell,n}}$ is generated as an algebra in $x_0$-degree 1. In particular, it is finitely generated.
\end{restatable}

The harmonic space $V_{m\Delta_{\ell,n}}$ (which will be defined in \Cref{sec:back}) is spanned by 
\[
    \delta_G := \prod_{(i,j)\in G} (x_i - x_j)
\]
for loopless multigraphs $G$ on vertex set $[n]$ with $\leq \ell m$ edges and degrees $\leq m$. The set of all such multigraphs is denoted $G(\ell,m,n)$. Multiplication in $\mathcal{H}_{\Delta_{\ell,n}}$ is the union of edge multisets. The degree-1 generation of $\mathcal{H}_{\Delta_{\ell,n}}$ translates into a structural graph theory problem. This problem is to show that every element of $G(\ell,m,n)$ is a linear combination of multigraphs that can be factored into $m$ graphs each with $\leq \ell$ edges and degrees $\leq 1$.

This result is analogous to $A_{\Delta_{\ell,n}}$ being generated in degree 1, which holds for hypersimplices because they have the IDP. In general, satisfying the IDP is not enough to guarantee that $\mathcal{H}_{\tope}$ is generated in $x_0$-degree 1. A counterexample is the triangle $\operatorname{conv}\{(1,0),(0,1),(-1,-1)\}$. Computational evidence suggests that $\mathcal{H}_{\tope_{\ell,m,n}}$ is also finitely generated in $x_0$-degree 1, but many of the steps in our approach do not immediately extend to this case. 

The driving result of our paper is the explicit computation of the orbit harmonics ideal of $Z_{\ell,m,n} = \tope_{\ell,m,n} \cap \mathbb{Z}^n$.

\begin{restatable}{introtheorem}{thmgri} \label{thm:gri}
  For $\ell \leq \ell_{\max}$ we have
  \[
      \gri{Z_{\ell,m,n}}=\langle x_1^{m+1},\dots,x_n^{m+1},x_1+\cdots+x_n, x_1^{a_1}\cdots x_n^{a_n}\text{ for } a_1+\cdots+a_n=\ell+1\rangle.
  \]
\end{restatable}

This ideal is as simple as one could hope: the relations stem from the cube, the linear form, and a degree truncation corresponding to the precise slice. We will prove this theorem with a dimension count invoking the weak Lefschetz property of the monomial complete intersection $A_{n,m} = \mathbb{Q}[x_1,\dots,x_n]/\langle x_1^{m+1},\dots,x_n^{m+1} \rangle$.

In the appendix, we develop the graph theory further to give a complete characterization of graph factorization at the level of sets. 
\begin{restatable}{introtheorem}{thmfullfact}
\label{thm:fullfact}
    For $m,m' \geq 1$ the containment
    \[
        G(\ell,m,n) \cdot G(\ell,m',n) \subseteq G(\ell,m + m' ,n)
    \]
    is an equality iff
    \begin{enumerate}
        \item $m$ and $m'$ are both even,
        \item $m$ and $m'$ are both odd and $\hat\ell \leq 1$,
        \item $m$ is odd, $m'$ is even and $m \geq \frac{2\hat\ell - 3}{3}$ (up to swapping $m$ and $m'$),
    \end{enumerate}
    where $\hat\ell = \min\{\ell,\lceil n/2 \rceil \}$.
\end{restatable}

Here,
\[
    G(\ell,m,n) \cdot G(\ell,m',n) := \{G_1 \cdot G_2 : G_1 \in G(\ell,m,n) \text{ and } G_2 \in G(\ell,m',n)\}.
\]
The proof is purely graph-theoretic, and involves a long analysis of the structure of the graphs' bridge forests.

\subsection{Outline}

In \Cref{sec:back}, we provide some background on orbit harmonics and symmetric functions. In \Cref{sec:frob}, we use the graded Frobenius characteristic to describe the orbit harmonics of slices of a cube. This description will allow us to write the $q$-Ehrhart series of a slice of a cube. In \Cref{sec:ratRec}, we directly prove that rationality and $q$-reciprocity are satisfied for slices of the cube. In \Cref{sec:harm}, we investigate the harmonic algebra for hypersimplices and show that it is finitely generated in $x_0$-degree 1 using connections to structural graph theory.

\section*{Acknowledgments}

We thank Brendon Rhoades for suggesting this problem and providing guidance throughout. We thank Vasu Tewari for being involved in an early draft of this paper.

\section*{Use of AI}

The proof of \Cref{thm:rational} was developed through conversation with Claude Opus 4.8 and Opus 5. The idea to prove \Cref{cor:path} via \Cref{lem:bipartite} was entirely due to Claude Opus 5, though the actual proof of \Cref{lem:bipartite} was developed primarily by the authors. The authors also used AI models to generate some examples and TikZ figures, as well as for general revision of the paper. The authors take full responsibility for the accuracy of all mathematical content.

\section{Background}
\label{sec:back}

\subsection{Orbit harmonics}

Throughout the paper we fix a positive integer $n$ and let $S \coloneqq \QQ[x_1, \dots, x_n]$ denote the polynomial ring in $n$ variables with its standard grading $\deg(x_i) = 1$ for all $i$. 
Given a graded vector space $V = \bigoplus_{d \geq 0} V_d$, we can write its \defn{Hilbert series}
\[
    \hilb(V; q) = \sum_{d \geq 0} (\dim(V_d)) \cdot q^d.
\]
For a bigraded vector space $V = \bigoplus_{i, j \geq 0} V_{i, j}$, its bigraded Hilbert series is
\[
    \hilb(V; t, q) = \sum_{i, j \geq 0} (\dim(V_{i, j})) \cdot t^i q^j.
\]

Given a nonzero polynomial $f \in S$, we define $\tau(f)$ to be its top degree homogeneous component.
Let $Z \subset \RR^n$ be a finite point locus. 
We can associate to $Z$ two ideals: the vanishing ideal $I(Z)$ and its associated graded ideal $\gri{Z}$, which we refer to as its \defn{orbit harmonics ideal}.
These are denoted 
\[
    I(Z) \coloneqq \{ f(x_1, \dots, x_n) \in S: f(z) = 0 \text{ for all } z \in Z \}
\]
and 
\[
    \gri{Z} = \langle \tau(f): f \in I(Z) \rangle,
\]
respectively.
We define the \defn{orbit harmonics quotient} of $Z$ to be $R(Z) \coloneqq S / \gri{Z}$. We have that
\begin{equation}\label{eq:dim}
    \dim R(Z) = |Z|,
\end{equation}
and since $\gri{Z}$ is homogeneous, the quotient $R(Z)$ inherits a grading from $S$. If $Z$ possesses an action of a symmetric group $\mathfrak{S}_n$, then $R(Z) \cong \QQ[Z]$ as ungraded $\mathfrak{S}_n$ representations. We denote by $\operatorname{Aff}(\mathbb{Z}^n) = GL_n(\mathbb{Z}) \ltimes \mathbb{Z}^n$ the group of \defn{affine transformations}. We say that point loci $Z, Z'$ are \defn{affine equivalent} if there exists $g \in \operatorname{Aff}(\mathbb{Z}^n)$ such that $Z' = gZ$. In this case, $R(Z) \cong R(Z')$ as graded algebras. We will say that polytopes $\tope, \tope'$ are affine equivalent if $\lpts{\tope}, \lpts{\tope'}$ are affine equivalent. In this case, we will write $\tope \cong \tope'$. The graded Ehrhart data of a polytope only depends on the affine equivalence class \cite[Proposition 3.3]{reinerrhoades}.

We define a pairing $\odot: S \times S \to S$ by 
\[
    f \odot g \coloneqq f \left( \frac{\partial}{\partial x_1}, \dots, \frac{\partial}{\partial x_n} \right) g.
\]
In other words, we let $S$ act on itself by taking partial derivatives. This allows us to define a bilinear pairing 
\[
    \langle f, g \rangle \coloneqq \text{The constant term of } f \odot g.
\]

Given a homogeneous ideal $I \subset S$, we define the \defn{harmonic space} (or \textit{Macaulay inverse system}) associated to that ideal as
\[
    I^\perp \coloneqq \{g(x) \in S: \langle f, g \rangle = 0 \text{ for all } f \in I\}.
\]
One can show (see e.g. \cite{reinerrhoades}) that $I^\perp = \{g(x) \in S: f \odot g = 0 \text{ for all } f \in I \}$, and that 
\[
    \hilb(I^\perp; q) = \hilb(S/I; q).
\]

Given a point locus $Z$, we define 
\[
    V_Z \coloneqq (\gri{Z})^\perp,
\]
and refer to this space as the \textit{harmonic space of $Z$}. By \Cref{eq:dim}, we see that $\hilb(V_Z; q)$ provides a natural $q$-analogue of $|Z|$. 
Given a set $A\subset \RR^n$, we let $\lpts{A}$ denote the set of lattice points in $A$, i.e. $\lpts{A}=A\cap \ZZ^n$.
We define
\[
    V_\tope := V_{\lpts{\tope}},
\]
for $\tope \subset \mathbb{R}^n$ a polytope. This vector space will be referred to as the \textit{harmonic space of $\tope$}.

We now review the definition of the \defn{harmonic algebra} associated to a lattice polytope $\tope$. 
See \cite{reinerrhoades} for more details and examples.
Let $x_0$ denote an indeterminate, and define the harmonic algebra of a polytope $\tope \subset \RR^n$ to be
\[
    \mathcal{H}_\tope = \bigoplus_{m \geq 0} \QQ \cdot x_0^m \otimes V_{m\tope}.
\]
It is not a priori obvious that $\mathcal{H}_\tope$ is closed under multiplication. This property is proved in \cite[Proposition 5.4]{reinerrhoades}. We consider the polynomial ring $\QQ[x_0, x_1, \dots, x_n]$ to be bigraded with $\deg(x_0) = (1, 0)$ and $\deg(x_i) = (0, 1)$ for $1 \leq i \leq n$. One can check that the $q$-Ehrhart series defined in \Cref{eq:q-ehr} is indeed the bigraded Hilbert series of $\mathcal{H}_\tope$. A crystallographic linear group action stabilizing $\lpts{\tope}$ induces an action of bigraded algebra automorphisms fixing $x_0$ on $\mathcal{H}_\tope$.

\subsection{Symmetric functions}
We now give a brief review of symmetric functions and representation theory of $\mathfrak{S}_n$. 
See \cite{macdonald} for a more detailed overview of these concepts.
Define a \defn{partition} $\lambda = (\lambda_1, \dots, \lambda_l)$ to be a weakly decreasing sequence of nonnegative integers. 
The length $\ell(\lambda)$ is the number of \emph{positive} parts of $\lambda$.
A \defn{Young diagram} of shape $\lambda$ is an upper-left justified sequence of empty boxes, with $\lambda_i$ boxes in the $i^{th}$ row. 
A \defn{Young tableau} of shape $\lambda$ is a filling of these boxes with integers.
Such a filling is called \defn{semistandard} if the values weakly increase across rows and strictly increase down columns.
The set of semistandard Young tableaux of shape $\lambda$ is denoted $\text{SSYT}(\lambda)$.

A \defn{symmetric function} over $\QQ$ is a formal power series of bounded degree in infinitely many variables with rational coefficients which is invariant under any permutation of the variables. 
The algebra of all symmetric functions over $\QQ$ is denoted $\Lambda_\QQ$.
An important symmetric function is the complete homogeneous symmetric function $h_\lambda$, defined as $h_\lambda \coloneqq h_{\lambda_1}\dots h_{\lambda_l}$, and
\[
    h_r \coloneqq \sum_{1 \leq i_1 \leq \dots \leq i_r} x_{i_1}\dots x_{i_r}.
\]

Given a partition $\lambda$, the \defn{Schur function} $s_\lambda$ is defined by
\[
    s_\lambda(x_1, x_2, \dots) \coloneqq \sum_{T \in \text{SSYT}(\lambda)} \prod_{i \geq 1} x_i^{\# \text{ of } i's \text{ in } T} .
\]
The $s_\lambda$ are symmetric (\cite{sagan}) and the collection $\{s_\lambda: \lambda \text{ a partition of } n\}$ forms a $\QQ$-linear basis for the vector space of symmetric functions of degree $n$.

The irreducible complex representations of $\mathfrak{S}_n$ are indexed by partitions $\lambda$ of $n$. The \defn{Frobenius characteristic} is a map 
\[
    \text{Frob}: C(\mathfrak{S}_n) \to \Lambda_\QQ^{(n)}
\]
from the vector space of class functions $\mathfrak{S}_n \to \mathbb{Q}$ to the vector space of homogeneous symmetric functions over $\mathbb{Q}$ of degree $n$. It is defined as the linear extension of
\[
    \text{Frob}(\chi^{\lambda}) \coloneqq s_\lambda
\]
where $\chi^\lambda$ is the irreducible character of $\mathfrak{S}_n$ corresponding to the partition $\lambda$. Since the character of any representation of $\mathfrak{S}_n$ is a class function, this map gives a way to associate a symmetric function to any representation of $\mathfrak{S}_n$. 
Given a representation $V$, we write $\Frob{V}$ for the symmetric function corresponding to the character of $V$. If $V = \bigoplus_{d \geq 0} V_d$ is a graded representation of $\mathfrak{S}_n$, then the \defn{graded Frobenius characteristic} $\grFrob{V}$ is defined as 
\[
    \grFrob{V} \coloneqq \sum_{d \geq 0} \Frob{V_d} \cdot q^d.
\]

\section{Orbit harmonics of cube slices}\label{sec:frob}

Some of the results in this section appeared in the first author's doctoral thesis \cite{libman2025schur}. For $\lambda=(\lambda_1\geq \cdots \geq  \lambda_n) \in \RR^n$ we define $\perm_{\lambda}$ to be the \defn{permutahedron} determined by $\lambda$: the convex hull of the points $\{\lambda_{\sigma}\}_{\sigma\in \mathfrak{S}_n}$ where $\lambda_{\sigma}\coloneqq (\lambda_{\sigma(1)},\dots,\lambda_{\sigma(n)})$.
We have that $\mathfrak{S}_n$ acts on the set $\lpts{\perm_{\lambda}}$ by permuting coordinates and thus gives a permutation representation of dimension $|\lpts{\perm_{\lambda}}|$.
The method of orbit harmonics lifts this representation to a graded $\mathfrak{S}_n$ representation.

\subsection{The algebras $A_{n,m}$ and $B_{n,m}$}

Given a positive integer $m$, let $m\cube_n$ denote the cube $[0,m]^n$. We have $|\lpts{m\cube_n}|= (m+1)^n$ and
\[
    I_{n,m} := \gri{\lpts{m\cube_n}} = \langle x_1^{m+1},\dots,x_n^{m+1} \rangle
\]
by \cite[Lemma 3.11]{reinerrhoades}. The orbit harmonics quotient is
\[
  A_{n,m}:=\QQ[x_1,\dots,x_n]/ I_{n,m}.
\]
Let $A_{n,m,d}$ denote the degree $d$ graded piece of $A_{n,m}$. The Hilbert series of $A_{n,m}$ is easily seen to be
\[
    \hilb(A_{n,m};q)\coloneqq \sum_{d\geq 0}\dim_{\QQ}(A_{n,m,d})q^d=\left(1+q+\dots + q^m\right)^n.
\]
The polynomial $\left(1+q+\dots + q^m\right)^n$ is symmetric and unimodal of degree $nm$. We define $\ell_{\max} = \lfloor nm /2 \rfloor$, which is the degree at which this polynomial has its maximal coefficient. 

Let $c(d,m,n):= \dim A_{n,m,d}$ with the convention that $A_{n,m,d} = 0$ if $d < 0$ or $m < 0$. We have that $c(d,m,n)$ is equal to the number of weak compositions of $d$ (that is, tuples of nonnegative integers summing to $d$) into $n$ parts each of size $\leq m$. Alternatively, $c(d,m,n)$ can be viewed as the number of lattice points in the cube $m\cube_n$ with coordinate sum $d$. We define $\delta(d,m,n) := c(d,m,n) - c(d-1,m,n)$. Oh and Rhoades express the graded Frobenius characteristic of $A_{n,m}$ as an $h$-positive symmetric function as follows:

\begin{proposition} (\cite{OR22}, Proposition 1) \label{OR22}
    \begin{align*}
        \grFrob{A_{n,m}} = \sum_{\substack{\lambda_1 < m+1 \\ \ell(\lambda) \leq n}} q^{|\lambda|} \cdot h_{\text{mult}(\lambda)},
    \end{align*}
    where $\text{mult}(\lambda)$ is the weakly decreasing rearrangement of $(n-\ell(\lambda), \text{mult}_1(\lambda), \text{mult}_2(\lambda), \dots, \text{mult}_{m}(\lambda))$, and $\text{mult}_i(\lambda)$ is the multiplicity of the value $i$ in $\lambda$.
\end{proposition}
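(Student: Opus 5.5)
We want to prove Proposition~\ref{OR22}: that $\grFrob{A_{n,m}}$ is the displayed sum over partitions $\lambda$ with $\lambda_1 \le m$ and $\ell(\lambda)\le n$. The plan is to exhibit a monomial basis of $A_{n,m}$ that is stable under $\mathfrak{S}_n$ up to permutation, and then decompose it into orbits. The ideal $I_{n,m}=\langle x_1^{m+1},\dots,x_n^{m+1}\rangle$ is a monomial ideal, so the standard monomials
\[
x_1^{a_1}\cdots x_n^{a_n}, \qquad 0\le a_i\le m,
\]
form a $\QQ$-basis of $A_{n,m}$, homogeneous with respect to the grading. Moreover, $I_{n,m}$ is stable under permuting variables, so $\mathfrak{S}_n$ acts on $A_{n,m}$ by graded algebra automorphisms. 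It permutes this basis: $\sigma$ sends $x^{a}$ to $x^{a\circ\sigma^{-1}}$, which is again a standard monomial. Hence each graded piece $A_{n,m,d}$ is a permutation representation on the set of exponent vectors $a\in\{0,\dots,m\}^n$ with $|a|=d$.

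Next we decompose this permutation representation into orbits. The $\mathfrak{S}_n$-orbits of such exponent vectors are indexed by their sorted rearrangements. Each sorted rearrangement is a partition $\lambda$ with at most $n$ positive parts (padded by zeros), $\lambda_1\le m$, and $|\lambda|=d$. So
\[
A_{n,m,d}\cong\bigoplus_{\lambda}\QQ[\mathfrak{S}_n\cdot\lambda],
\]
summed over those $\lambda$. For a fixed $\lambda$, the stabilizer of the vector $(\lambda_1,\dots,\lambda_{\ell(\lambda)},0,\dots,0)$ is the Young subgroup
\[
\mathfrak{S}_{n-\ell(\lambda)}\times\mathfrak{S}_{\text{mult}_1(\lambda)}\times\cdots\times\mathfrak{S}_{\text{mult}_m(\lambda)},
\]
since positions holding equal values may be permuted freely. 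The orbit representation is therefore the induced trivial representation $\operatorname{Ind}_{\mathfrak{S}_\mu}^{\mathfrak{S}_n}\mathbf{1}$, where $\mu=\text{mult}(\lambda)$ is the composition above, sorted.

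Finally we apply the standard fact that $\Frob{\operatorname{Ind}_{\mathfrak{S}_\mu}^{\mathfrak{S}_n}\mathbf 1}=h_\mu$. Since the $h_r$ commute, $h_\mu$ does not depend on the ordering of the parts, so the weakly decreasing rearrangement is harmless, and zero multiplicities contribute $h_0=1$. Weighting each orbit by $q^{|\lambda|}$ and summing over $d$ gives the formula.

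Nothing in this argument is a real obstacle. The one point needing care is to check that the action sends standard monomials to standard monomials, rather than merely to monomials that reduce modulo the ideal. This holds because the condition $a_i\le m$ is symmetric in $i$.
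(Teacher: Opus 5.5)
Your proposal is correct and complete. The paper gives no proof of this proposition; it imports it from Oh and Rhoades. Your argument is the standard self-contained one: the standard monomials $x^a$ with $0\le a_i\le m$ are permuted by $\mathfrak{S}_n$, so each $A_{n,m,d}$ is a permutation module. Its orbits correspond to partitions $\lambda$ of $d$ with $\lambda_1\le m$ and $\ell(\lambda)\le n$, and the Young-subgroup stabilizers give $h_{\text{mult}(\lambda)}$ via $\operatorname{Frob}(\operatorname{Ind}_{\mathfrak{S}_\mu}^{\mathfrak{S}_n}\mathbf{1})=h_\mu$.
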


Next, we consider a quotient of $A_{n,m}$ by the linear form $x_1 + \cdots + x_n$. This algebra will play an important role in the orbit harmonics for slices of the cube by the hyperplane $x_1 + \cdots + x_n = \ell$. The algebra $B_{n,m}$ is defined as 
\begin{align*}
  B_{n,m}=S/ {\widetilde{I}_{n,m}}
\end{align*}
where $\widetilde{I}_{n,m}\coloneqq \langle x_1^{m+1},\dots,x_n^{m+1},x_1+\cdots+x_n \rangle$.

\begin{theorem} \label{thm grFrobB}
For $B_{n,m}$, we have
\begin{enumerate}
    \item $\dim B_{n,m,d} = \delta(d,m,n)$ for $d \leq \ell_{max}$, and $B_{n,m,d} = 0$ for $d > \ell_{max}$.

    \item $\grFrob{B_{n,m}}= \sum_{d\leq \ell_{\max}} (\Frob{A_{n,m,d}}-\Frob{A_{n,m,d-1}})q^d$.
\end{enumerate}
\end{theorem}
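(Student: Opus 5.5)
The plan is to derive both parts from the weak (in fact strong) Lefschetz property of the monomial complete intersection $A_{n,m}$, with Lefschetz element $\omega := x_1+\cdots+x_n$. By construction $B_{n,m} = A_{n,m}/\omega A_{n,m}$, so for each degree $d$ there is an exact sequence
\[
    A_{n,m,d-1} \xrightarrow{\;\cdot \omega\;} A_{n,m,d} \longrightarrow B_{n,m,d} \longrightarrow 0 .
\]
Over $\QQ$ (characteristic zero), the monomial complete intersection $A_{n,m}$ has the strong Lefschetz property with respect to $\omega$. This is the classical result of Stanley and Watanabe, proved via the hard Lefschetz theorem for $(\mathbb{P}^m)^n$ or via $\mathfrak{sl}_2$-representation theory, and I will cite it rather than reprove it. Since the Hilbert function $c(d,m,n)$ is symmetric and unimodal with peak at $\ell_{\max}$, multiplication by $\omega$ from degree $d-1$ to degree $d$ has maximal rank. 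It is therefore injective when $c(d-1,m,n)\le c(d,m,n)$, which holds for $d \le \ell_{\max}$ (the case $d=\ell_{\max}+1$ with $nm$ odd also fits, since there the two dimensions are equal). It is surjective when $d > \ell_{\max}$. From the exact sequence, $\dim B_{n,m,d} = c(d,m,n)-c(d-1,m,n) = \delta(d,m,n)$ for $d\le \ell_{\max}$, and $B_{n,m,d}=0$ for $d>\ell_{\max}$. This gives (1).

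For (2), I upgrade the dimension count to an equivariant statement. The ideal $\langle x_1^{m+1},\dots,x_n^{m+1}\rangle$ is $\mathfrak{S}_n$-stable and $\omega$ is $\mathfrak{S}_n$-invariant, so the map $\cdot\omega$ is $\mathfrak{S}_n$-equivariant and the sequence above is a sequence of $\mathfrak{S}_n$-modules. For $d\le \ell_{\max}$ the map is injective, so we get a short exact sequence $0\to A_{n,m,d-1}\to A_{n,m,d}\to B_{n,m,d}\to 0$ of $\mathfrak{S}_n$-representations. By Maschke's theorem it splits, so $\Frob{B_{n,m,d}} = \Frob{A_{n,m,d}} - \Frob{A_{n,m,d-1}}$. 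Summing against $q^d$, and using $B_{n,m,d}=0$ for $d>\ell_{\max}$, yields the formula for $\grFrob{B_{n,m}}$.

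The only substantive input is the Lefschetz property in the form of maximal rank of $\cdot\omega$ in every degree. I would state it as an imported theorem, noting that it requires characteristic zero, which holds since we work over $\QQ$. The remaining care needed is at the boundary degree $d=\ell_{\max}+1$: if $nm$ is even the map is surjective, and if $nm$ is odd the two dimensions are equal so the map is bijective. In both cases $B_{n,m,d}=0$, so the boundary causes no problem.
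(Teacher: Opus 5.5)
Your proposal is correct and matches the paper's proof: both use the (strong, hence weak) Lefschetz property of the monomial complete intersection $A_{n,m}$ with respect to $x_1+\cdots+x_n$, together with symmetric unimodality of $(1+q+\cdots+q^m)^n$. This gives injectivity of multiplication by $x_1+\cdots+x_n$ for $d \le \ell_{\max}$ and surjectivity for $d > \ell_{\max}$, and $\mathfrak{S}_n$-equivariance of that map then yields the Frobenius formula. Your appeal to Maschke's theorem and your check of the boundary degree $d=\ell_{\max}+1$ only make explicit what the paper leaves implicit.
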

\begin{proof}
  Begin by noting that $A_{n,m}$ is a monomial complete intersection, so in characteristic zero $L\coloneqq x_1+\cdots+x_n$ is a strong Lefschetz element \cite{stanleyLefschetz}. We only will only use the resulting weak Lefschetz property of $L$.
  This property implies that the multiplication-by-$L$ map has maximal rank in each degree. Combining this fact with the unimodality of $\hilb(A_{n,m};q)$, we get that
 \begin{align*}
   \phi_d:A_{n,m,d-1}\to A_{n,m,d}
 \end{align*}
 defined by $\phi_d(f)=L\cdot f$ is an injection for $1\leq d\leq \ell_{\max}$ and a surjection for $d>\ell_{\max}$. We have that the graded degree $d>0$ piece $B_{n,m,d}$ satisfies
 \begin{align*}
   B_{n,m,d} = A_{n,m,d} / \image(\phi_d).
 \end{align*}
 So $\dim B_{n,m,d} = \delta(d,m,n)$ for $d \leq \ell_{max}$, and $B_{n,m,d} = 0$ for $d > \ell_{max}$. Hence $\hilb(B_{n,m};q)$ is the truncation of $(1-q)\hilb(A_{n,m};q)$ to degrees $\leq \ell_{\max}$.
 
 Since $L$ is $\mathfrak{S}_n$-invariant, $\phi_d$ is an $\mathfrak{S}_n$-homomorphism.
 So the graded Frobenius characteristic of the $\mathfrak{S}_n$-module $B_{n,m}$ is
\begin{align*}
  \grFrob{B_{n,m}}=\sum_{d\geq 0}\Frob{B_{n,m,d}}q^d=\sum_{d\leq \ell_{\max}} (\Frob{A_{n,m,d}}-\Frob{A_{n,m,d-1}})q^d.
\end{align*}
\end{proof}

A symmetric function is \defn{Schur-positive} if its expansion in the Schur basis has nonnegative coefficients, or equivalently, if it is the Frobenius image of an $\mathfrak{S}_n$-representation.

\begin{corollary}
For $0 \leq d \leq \ell_{\max}$, $\Frob{A_{n,m,d}}-\Frob{A_{n,m,d-1}}$ is Schur-positive.
\end{corollary}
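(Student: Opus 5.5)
The corollary follows directly from the proof of \Cref{thm grFrobB}, by realizing the difference of Frobenius characteristics as the Frobenius characteristic of an honest $\mathfrak{S}_n$-representation.

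Fix $0 \leq d \leq \ell_{\max}$. The case $d = 0$ is immediate: $A_{n,m,-1} = 0$, so the difference is $\Frob{A_{n,m,0}} = s_{(n)}$, the trivial representation.

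For $1 \leq d \leq \ell_{\max}$, consider the map $\phi_d \colon A_{n,m,d-1} \to A_{n,m,d}$ given by multiplication by $L = x_1 + \cdots + x_n$. The proof of \Cref{thm grFrobB} established two facts about it. First, by the weak Lefschetz property together with unimodality of $(1+q+\cdots+q^m)^n$, the map $\phi_d$ is injective in this range. Second, since $L$ is $\mathfrak{S}_n$-invariant, $\phi_d$ is $\mathfrak{S}_n$-equivariant.

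It follows that $\image(\phi_d) \cong A_{n,m,d-1}$ as $\mathfrak{S}_n$-modules, and that $B_{n,m,d} = A_{n,m,d}/\image(\phi_d)$ is an $\mathfrak{S}_n$-module. Since we work over $\QQ$ in characteristic zero, Maschke's theorem splits the short exact sequence
\[
0 \to A_{n,m,d-1} \xrightarrow{\phi_d} A_{n,m,d} \to B_{n,m,d} \to 0
\]
equivariantly. Hence $A_{n,m,d} \cong A_{n,m,d-1} \oplus B_{n,m,d}$. Applying $\operatorname{Frob}$, which is additive on direct sums, gives
\[
\Frob{A_{n,m,d}} - \Frob{A_{n,m,d-1}} = \Frob{B_{n,m,d}}.
\]
The right-hand side is the Frobenius image of a genuine representation, so it is Schur-positive.

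Nothing here is hard. The only point needing care is that injectivity of $\phi_d$ holds exactly for $d \leq \ell_{\max}$, and that hypothesis is precisely the one assumed in the statement.
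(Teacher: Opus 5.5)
Your proof is correct and is essentially the paper's. The corollary is read off from part (2) of \Cref{thm grFrobB}: the injective, $\mathfrak{S}_n$-equivariant multiplication-by-$L$ map identifies $\Frob{A_{n,m,d}}-\Frob{A_{n,m,d-1}}$ with $\Frob{B_{n,m,d}}$, the Frobenius image of a genuine representation; your appeal to Maschke is harmless but not needed, since characters are already additive on short exact sequences.
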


\subsection{The slices $\tope_{\ell,m,n}$}

We will be interested in slices of $m\cube_{n}$ given by intersecting with translates of the hyperplane $H\coloneqq \{(z_1,\dots,z_n)\suchthat z_1+\cdots+z_n=0\}$.
We only need to consider slices
\begin{align} \label{eq:tope}
    \tope_{\ell,m,n} \coloneqq m\cube_n\cap H^{(\ell)},
\end{align}
where $H^{(\ell)}\coloneqq \{(z_1,\dots,z_n)\suchthat z_1+\cdots+z_n=\ell\}$ and $\ell$ satisfies $0 < \ell <nm$. The dimension of the polytope $\tope_{\ell,m,n}$ is $n-1$.
The polytopes $\tope_{\ell,m,n}$ are permutahedra: let $a,b$ be the unique nonnegative integers such that $\ell=am+b$, with $b < m$.
Then we have that
\begin{align*}
    \tope_{\ell,m,n}=\perm_{(m^a,b,0^{n-a-1})}.
\end{align*} 
The slices $\tope_{\ell,m,n}$ and $\tope_{nm-\ell,m,n}$ are affine equivalent, so we may restrict our attention to the range $0 < \ell\leq \ell_{\max}$.

Let $Z_{\ell,m,n}=\lpts{\tope_{\ell,m,n}}$ so that $|Z_{\ell,m,n}| = c(\ell,m,n)$. A dilation of a slice of a cube is a slice of a larger cube. More precisely, we have $k\tope_{\ell,m,n} = \tope_{k\ell,km,n}$. The set of relative interior lattice points of a slice of a cube is affine equivalent to the set of lattice points in the slice of a smaller cube. More precisely, we have $\operatorname{int}(\tope_{\ell,m,n}) \cap \mathbb{Z}^n \cong Z_{\ell - n, m-2, n}$.

\subsection{The orbit harmonics ideal}

In this subsection, we will compute the orbit harmonics ideal of $Z_{\ell,m,n}$.
\begin{proposition}\label{pro:members_vanishing_ideal}
  The following polynomials lie in  $I(Z_{\ell,m,n})$.
  \begin{enumerate}
    \item[(i)] \label{it1} $x_1+\cdots+x_n-\ell$.
    \item[(ii)] \label{it2} $x_i(x_i-1)\cdots (x_i-m)$ for $1\leq i\leq n$.
    \item[(iii)] \label{it3} The polynomials
    \[
    f_a=\prod_{\substack{1\leq i\leq n\\a_i>0}}x_i(x_i-1)\cdots (x_i-a_i+1)
    \]
    where $a=(a_1,\dots,a_n)\in \ZZ_{\geq 0}^n$ is such that $a_1+\cdots+a_n > \ell$.
  \end{enumerate}
\end{proposition}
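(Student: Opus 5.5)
The plan is to verify each item by evaluating the polynomial at an arbitrary point $z=(z_1,\dots,z_n)\in Z_{\ell,m,n}$. Recall that such a $z$ lies in $\ZZ^n$, has $0\le z_i\le m$ for all $i$, and has $z_1+\cdots+z_n=\ell$.

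Item (i) is immediate from the defining hyperplane $H^{(\ell)}$, since $\sum z_i-\ell=0$.

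For item (ii), the factor $x_i(x_i-1)\cdots(x_i-m)$ vanishes at every integer in $\{0,1,\dots,m\}$. Since $z_i$ is an integer with $0\le z_i\le m$, one of the factors is zero at $z$.

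Item (iii) is the only step requiring an argument, a pigeonhole-type observation. Fix $a\in\ZZ_{\ge0}^n$ with $\sum a_i>\ell$, and let $z\in Z_{\ell,m,n}$. Since $\sum_i z_i=\ell<\sum_i a_i$, there is some index $i$ with $z_i<a_i$; in particular $a_i>0$, so the factor $x_i(x_i-1)\cdots(x_i-a_i+1)$ actually appears in $f_a$. This falling factorial vanishes at every integer in $\{0,1,\dots,a_i-1\}$. The coordinate $z_i$ is a nonnegative integer with $z_i\le a_i-1$, so this factor is zero at $z$, and therefore $f_a(z)=0$.

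I expect no real obstacle. The one point to state carefully is that the product in $f_a$ runs only over indices with $a_i>0$, and the index $i$ found above automatically satisfies this. Taken together, (i)–(iii) show that all three families lie in $I(Z_{\ell,m,n})$.

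For later use, I would also record their top-degree components. These are $x_1+\cdots+x_n$, then $x_i^{m+1}$, and finally the monomials $x^a$ with $|a|>\ell$, in particular those of degree $\ell+1$. They are exactly the generators appearing in \Cref{thm:gri}, so this proposition gives the containment $\supseteq$ there; the reverse containment will come from the dimension count via \Cref{thm grFrobB}.
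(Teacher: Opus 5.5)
Your proposal is correct and follows essentially the same argument as the paper: (i) and (ii) are checked directly, and for (iii) you compare coordinate sums to find an index with $z_i<a_i$. This is the paper's proof by contradiction stated directly, and your observation that this index automatically has $a_i>0$ replaces the paper's reduction, by permuting coordinates, to $a=(a_1,\dots,a_r,0^{n-r})$.
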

\begin{proof} 
    Let $p\coloneqq (p_1,\dots,p_n)\in Z_{\ell,m,n}$.
    \begin{enumerate}
        \item[(i)] $x_1 + \cdots + x_n - \ell$ vanishes at $p$ because $p$ satisfies the hyperplane equation $p_1 + \cdots + p_n = \ell$.
        \item[(ii)] Each $p_i$ takes values in $\{0,\dots,m\}$ and hence $\prod_{0\leq j\leq m}(p_i-j)=0$, so the polynomial 
        \[
            x_i(x_i-1)\cdots (x_i-m)
        \]
        vanishes at $p$.
        \item[(iii)] By permuting coordinates, it suffices to consider the cases with $a=(a_1,\dots,a_r,0^{n-r})$ where $a_1,\dots,a_r>0$ (and $r$ is some positive integer). If any $p_i$ for $1\leq i\leq r$ satisfies $0\leq p_i<a_i$, then $f_a$ clearly vanishes. Hence suppose $p_i\geq a_i$ for $1\leq i\leq r$. But then $\sum_{i=1}^r p_i\leq \ell$ and $\sum_{i=1}^r a_i>\ell$, which contradicts $\sum_{i=1}^r p_i \geq \sum_{i=1}^r a_i$.
    \end{enumerate}
\end{proof}

The next theorem shows that the top-degree components of the polynomials in \Cref{pro:members_vanishing_ideal} generate the orbit harmonics ideal. This phenomenon does not always occur -- the top-degree components of a generating set of an ideal do not necessarily generate the associated graded ideal. A simple counterexample is the ideal $I = \langle x^2 + y, xy \rangle$ in $\mathbb{Q}[x,y]$. The top-degree components of these generators give $\langle x^2,xy \rangle$. But 
\[
    y^2 = y(x^2 + y) - x(xy),
\]
so $y^2 \in \operatorname{gr} I$, but $y^2 \not\in \langle x^2,xy \rangle$. The computation of the orbit harmonics ideal is a major part of the difficulty in much of orbit harmonics theory.

\thmgri*
\begin{proof}
    Let $J$ denote the ideal on the right-hand side of \Cref{thm:gri}. 
    Then by \Cref{pro:members_vanishing_ideal}, we have $J \subseteq \gri{Z_{\ell,m,n}}$.

    For the reverse, let $S_+$ be the ideal $\langle x_1,\dots,x_n \rangle$. Then, $J = \widetilde{I}_{n,m} + S_+^{\ell + 1}$. Hence $S/J$ is the degree $\leq \ell$ truncation of $B_{n,m}$. By \Cref{thm grFrobB}, we get
    \[
        \dim S/J = \sum_{d = 0}^\ell \dim B_{n,m,d} = \sum_{d= 0}^\ell \delta(d,m,n) = c(\ell,m,n) = |Z_{\ell,m,n}| = \dim S/\gri{Z_{\ell,m,n}}.
    \]
    Hence $J = \gri{Z_{\ell,m,n}}$.
\end{proof}

\begin{remark}
    At $\ell = \ell_{\max}$, we have $S_+^{\ell_{\max}+1} \subseteq \widetilde{I}_{n,m}$, so $\gri{Z_{\ell_{\max},m,n}} = \widetilde{I}_{n,m}$.
\end{remark}

\begin{remark}
    If $m \geq \ell$, the generators $x_i^{m+1}$ are redundant. In this case, we have that $\tope_{\ell,m,n} = \ell \Delta_{1,n}$ is a dilated simplex.
\end{remark}

\begin{corollary}
    $S / \gri{Z_{\ell,m,n}} \cong \bigoplus_{d \leq \ell} B_{n,m,d}$ as graded $\mathfrak{S}_n$-modules.
\end{corollary}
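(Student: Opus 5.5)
We combine \Cref{thm:gri} with the structure of $B_{n,m}$. By \Cref{thm:gri}, and as noted in its proof, $\gri{Z_{\ell,m,n}} = J = \widetilde{I}_{n,m} + S_+^{\ell+1}$. Since $\widetilde{I}_{n,m}$ and $S_+^{\ell+1}$ are homogeneous, the degree-$d$ piece of $S/J$ is computed directly. For $d \leq \ell$ we have $(S_+^{\ell+1})_d = 0$, so $J_d = (\widetilde{I}_{n,m})_d$ and $(S/J)_d = S_d/(\widetilde{I}_{n,m})_d = B_{n,m,d}$. For $d > \ell$ we have $J_d = S_d$, so $(S/J)_d = 0$. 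Hence, as graded vector spaces,
\[
S/\gri{Z_{\ell,m,n}} \;=\; \bigoplus_{d \leq \ell} B_{n,m,d}.
\]

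The identifications are equivariant. The ideals $\langle x_1^{m+1},\dots,x_n^{m+1}\rangle$, $\langle x_1+\cdots+x_n\rangle$ and $S_+^{\ell+1}$ are all stable under the $\mathfrak{S}_n$-action permuting variables. So $\widetilde{I}_{n,m}$ and $J$ are $\mathfrak{S}_n$-stable homogeneous ideals. The degree-$d$ pieces of both quotients are therefore quotients of the same $\mathfrak{S}_n$-module $S_d$ by the same submodule when $d \leq \ell$. The identity map on $S_d$ then induces an $\mathfrak{S}_n$-isomorphism $(S/J)_d \cong B_{n,m,d}$.

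The only point to check is that, for $d \leq \ell$, the equality $J_d = (\widetilde{I}_{n,m})_d$ holds exactly as sets, and not merely up to dimension. This follows because $S_+^{\ell+1}$ has no components in degree $\leq \ell$. No appeal to the Lefschetz property is needed beyond its use already made in \Cref{thm:gri}. Combined with \Cref{thm grFrobB}, the isomorphism also gives
\[
\grFrob{S/\gri{Z_{\ell,m,n}}} = \sum_{d \leq \ell} \left(\Frob{A_{n,m,d}} - \Frob{A_{n,m,d-1}}\right) q^d,
\]
using $\ell \leq \ell_{\max}$.
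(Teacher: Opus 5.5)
Your proposal is correct and follows essentially the paper's route. The corollary is read off from the proof of \Cref{thm:gri}: writing $J = \widetilde{I}_{n,m} + S_+^{\ell+1}$ shows that $S/J$ is the degree $\leq \ell$ truncation of $B_{n,m}$, with $\mathfrak{S}_n$-equivariance automatic since all ideals involved are $\mathfrak{S}_n$-stable (and, as in \Cref{thm:gri}, with the standing hypothesis $\ell \leq \ell_{\max}$).
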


\begin{corollary}
    \[
        \grFrob{S/\gri{{Z_{\ell,m,n}}}} = \sum_{d\leq \ell} (\Frob{A_{n,m,d}}-\Frob{A_{n,m,d-1}})q^d
    \]
    and
    \[
        \hilb(S/\gri{Z_{\ell,m,n}}) = [(1-q)(1+q+\dots+q^m)^n]_{\leq \ell}= \sum_{d = 0}^{\ell} \delta(d,m,n) q^d.
    \]
\end{corollary}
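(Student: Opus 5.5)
The plan is to read both formulas off the previous corollary, which identifies $S/\gri{Z_{\ell,m,n}}$ with $\bigoplus_{d\le \ell} B_{n,m,d}$ as graded $\mathfrak{S}_n$-modules. Taking graded Frobenius characteristics of both sides, the left side becomes $\sum_{d\le \ell}\Frob{B_{n,m,d}}\,q^d$. By \Cref{thm grFrobB}(2), each graded piece satisfies $\Frob{B_{n,m,d}}=\Frob{A_{n,m,d}}-\Frob{A_{n,m,d-1}}$ whenever $d\le\ell_{\max}$. Since we are assuming $\ell\le \ell_{\max}$, every $d\le \ell$ is in this range, and summing gives the first formula.

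The only point that needs care is the identity $\Frob{B_{n,m,d}}=\Frob{A_{n,m,d}}-\Frob{A_{n,m,d-1}}$ in each degree $d$. It comes from the short exact sequence of $\mathfrak{S}_n$-modules
\[
0\to A_{n,m,d-1}\xrightarrow{\ \cdot L\ } A_{n,m,d}\to B_{n,m,d}\to 0,
\]
where $L=x_1+\cdots+x_n$. The first map is injective for $d\le\ell_{\max}$ by the weak Lefschetz property, and it is $\mathfrak{S}_n$-equivariant because $L$ is symmetric. Over $\QQ$, Maschke's theorem makes the sequence split, so characters are additive and the identity follows.

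For the Hilbert series, specialize by taking dimensions, which is the same as applying the Hilbert series to each graded piece. This gives $\hilb(S/\gri{Z_{\ell,m,n}};q)=\sum_{d=0}^{\ell}\delta(d,m,n)q^d$, using $\dim B_{n,m,d}=\delta(d,m,n)$ from \Cref{thm grFrobB}(1). Next, $(1-q)(1+q+\cdots+q^m)^n=\sum_d \bigl(c(d,m,n)-c(d-1,m,n)\bigr)q^d=\sum_d\delta(d,m,n)q^d$. Truncating this to degrees $\le \ell$ gives the bracketed expression, which proves the second formula.

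I do not expect a real obstacle. The main thing to check is that the hypothesis $\ell\le\ell_{\max}$ is what allows \Cref{thm grFrobB} to be applied in every degree $d\le \ell$, so that no surjective-range degrees occur. At the boundary $d=0$, the convention $A_{n,m,-1}=0$ makes the formula read $\Frob{B_{n,m,0}}=\Frob{A_{n,m,0}}$, which is the trivial representation, as it should be.
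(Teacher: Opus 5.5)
Your proposal is correct and matches how the paper gets this corollary: the paper gives no separate argument, reading both formulas off the identification $S/\gri{Z_{\ell,m,n}} \cong \bigoplus_{d\le \ell} B_{n,m,d}$ together with both parts of \Cref{thm grFrobB}, with $\ell \le \ell_{\max}$ ensuring every degree $d \le \ell$ lies in the range where multiplication by $L$ is injective. Your split short exact sequence and the identity $(1-q)\hilb(A_{n,m};q)=\sum_d \delta(d,m,n)q^d$ simply spell out the routine details the paper leaves implicit.
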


\begin{corollary}
\label{cor:EhrTope}
For $\ell \leq \ell_{\max}$,
    \[
        E_{\tope_{\ell,m,n}}(t,q) = \sum_{k \geq 0} \sum_{d = 0}^{k \ell} \delta(d,km,n) q^{d}t^{k}.
    \]
\end{corollary}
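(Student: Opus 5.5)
The plan is to assemble the formula directly from the definition of the $q$-Ehrhart series in \Cref{eq:q-ehr} together with the computed Hilbert series of the orbit harmonics quotients. By definition, the coefficient of $t^k$ in $E_{\tope_{\ell,m,n}}(t,q)$ is $i_{\tope_{\ell,m,n}}(k;q)$. This is the Hilbert series of $R(\lpts{k\tope_{\ell,m,n}})$.

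\textbf{Step 1: dilates stay in the family.} For $k=0$ the dilate is the single point $\{0\}$, so $R$ is $\QQ$ in degree $0$ and contributes $1 = \delta(0,0,n)$, consistent with the stated sum. For $k \geq 1$, use the identity $k\tope_{\ell,m,n} = \tope_{k\ell,km,n}$ recorded above. Hence $\lpts{k\tope_{\ell,m,n}} = Z_{k\ell,km,n}$.

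\textbf{Step 2: check the hypothesis of the previous corollary.} The formula
\[
\hilb(S/\gri{Z_{\ell,m,n}}) = \sum_{d=0}^{\ell}\delta(d,m,n)q^d
\]
requires the slice parameter to be at most $\ell_{\max}$ for the cube in question. Here that cube is $[0,km]^n$ and the slice parameter is $k\ell$. Since $\ell \leq \lfloor nm/2\rfloor \leq nm/2$, we get $k\ell \leq knm/2$. Because $k\ell$ is an integer, this gives $k\ell \leq \lfloor knm/2 \rfloor$, which is the $\ell_{\max}$ for parameters $(km,n)$. Applying the corollary with $(\ell,m)$ replaced by $(k\ell,km)$ then gives
\[
i_{\tope_{\ell,m,n}}(k;q) = \sum_{d=0}^{k\ell}\delta(d,km,n)q^d.
\]

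\textbf{Step 3: sum over $k$.} Summing the coefficients from Step 2 against $t^k$ over all $k \geq 0$ yields exactly the claimed expression.

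The only point needing care is Step 2, namely the integrality argument that $\ell \leq \ell_{\max}(m,n)$ implies $k\ell \leq \ell_{\max}(km,n)$. The earlier results of the paper (\Cref{thm:gri} and the dimension count behind it) are stated only in that range, so this check is what makes them applicable. The rest is direct substitution.
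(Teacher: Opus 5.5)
Your proposal is correct and matches the paper's (implicit) argument: apply the preceding Hilbert series corollary to $Z_{k\ell,km,n}$ using $k\tope_{\ell,m,n} = \tope_{k\ell,km,n}$, then sum over $k$. Your explicit check that $k\ell \leq \lfloor knm/2 \rfloor$ and your separate treatment of $k=0$ fill in details the paper leaves tacit.
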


\begin{corollary}
\label{cor:intEhrTope}
    For $\ell \leq \ell_{\max}$,
    \[
        \overline{E}_{\tope_{\ell,m,n}}(t,q) = \sum_{k \geq 0} \sum_{d = 0}^{k \ell - n} \delta(d,km - 2,n) q^{d}t^{k}.
    \]
\end{corollary}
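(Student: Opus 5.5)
The statement to prove is \Cref{cor:intEhrTope}: for $\ell\le\ell_{\max}$, the coefficient of $t^k$ in $\overline{E}_{\tope_{\ell,m,n}}(t,q)$ is $\sum_{d=0}^{k\ell-n}\delta(d,km-2,n)q^d$. The plan is to identify interior lattice points of each dilate with a smaller cube slice, and then read off the Hilbert series from the corollary computing $\hilb(S/\gri{Z_{\ell,m,n}})$.

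\emph{Step 1: reduce to a cube slice.} By definition, $\overline{i}_{\tope}(k;q)$ is the Hilbert series of $R(\operatorname{int}(k\tope)\cap\ZZ^n)$. We have $k\tope_{\ell,m,n}=\tope_{k\ell,km,n}$. The relative interior of this dilate within $H^{(k\ell)}$ consists of the points with $0<z_i<km$ for all $i$, because $\tope_{k\ell,km,n}$ is full-dimensional in $H^{(k\ell)}$ whenever the interior is nonempty. The translation $z\mapsto z-(1,\dots,1)$ lies in $\operatorname{Aff}(\ZZ^n)$. It maps these lattice points bijectively onto $Z_{k\ell-n,km-2,n}$. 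Since affine equivalent loci have isomorphic graded orbit harmonics quotients, $\overline{i}(k;q)=\hilb(S/\gri{Z_{k\ell-n,km-2,n}};q)$.

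\emph{Step 2: check the hypotheses and apply the formula.} To use the earlier corollary we need $k\ell-n\le\lfloor n(km-2)/2\rfloor$. This is equivalent to $k\ell\le \lfloor nkm/2\rfloor$, and that holds because $\ell\le nm/2$ gives $k\ell\le nkm/2$ with $k\ell$ an integer. When $0\le k\ell-n$, the corollary gives $\sum_{d=0}^{k\ell-n}\delta(d,km-2,n)q^d$.

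\emph{Step 3: degenerate cases.} If $k\ell-n<0$, the sum is empty and there are no interior points, so the coefficient is $0$ as required. If $km-2<0$, then $c(\cdot,km-2,n)=0$ by convention, so the sum vanishes, and again there are no interior points. There is one remaining boundary case: $k\ell-n=0$, or $km=2$ with no interior lattice points. For these I will just check the formula directly with the stated conventions. For example, $Z_{0,m',n}$ is the single point $0$, giving Hilbert series $1=\delta(0,m',n)$.

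The main obstacle is Step 2's range check, together with the earlier result's implicit requirement $\ell\ge 0$ for the slice. Both are handled by the arithmetic above and the conventions for $c$.
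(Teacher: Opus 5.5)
Your proposal is correct and is essentially the paper's argument. The paper likewise combines $k\tope_{\ell,m,n}=\tope_{k\ell,km,n}$ with the affine equivalence $\operatorname{int}(\tope_{\ell,m,n})\cap\ZZ^n\cong Z_{\ell-n,m-2,n}$ (translation by $(1,\dots,1)$), and then reads off the Hilbert series from the preceding corollary. Your check that $k\ell-n\le\lfloor n(km-2)/2\rfloor$, and your handling of the degenerate cases, fill in details the paper leaves implicit. One small correction: for $k=0$ the relative interior of $\{0\}$ is $\{0\}$ itself, so the zero coefficient there comes from the standard convention $\overline{i}_{\tope}(0;q)=0$, not from an absence of interior points.
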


\section{Rationality and Reciprocity}
\label{sec:ratRec}

In this section we will prove directly using generating function techniques that $E_{\tope_{\ell,m,n}}(t,q)$ is rational and satisfies $q$-reciprocity. We will use the following three lemmas in the proof of \Cref{thm:rational}. The first lemma gives us a method for showing the rationality of certain power series and a reciprocity result for such series.

\begin{lemma}
\label{lem:polyrat}
    Suppose $f(x_1,\dots,x_k) \in \mathbb{C}[x_1,\dots,x_k]$ with degree $d_i$ as a polynomial in $x_i$. Then 
    \[
        \sum_{m_1,\dots,m_k \geq 0} f(m_1,\dots,m_k) x_1^{m_1}\cdots x_k^{m_k}
    \]  
    is a rational function $F(x_1,\dots,x_k)$ with denominator dividing $\prod_{i=1}^k (1-x_i)^{d_i + 1}$. Moreover,
    \[
        F(1/x_1,\dots,1/x_k) = (-1)^k \sum_{m_1,\dots,m_k \geq 1} f(-m_1,\dots,-m_k) x_1^{m_1}\cdots x_k^{m_k}.
    \]
\end{lemma}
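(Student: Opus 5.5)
The plan is to reduce to the one-variable case and then handle the multivariable statement by linearity and tensor products.

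\emph{Reduction.} Both sides of each claim are linear in $f$. Every polynomial $f$ is a linear combination of monomials $x_1^{e_1}\cdots x_k^{e_k}$ with $e_i \le d_i$. So it suffices to treat a single monomial. For $f=\prod_i x_i^{e_i}$ the series factors as
\[
  \prod_{i=1}^k \Bigl(\sum_{m_i\ge 0} m_i^{e_i} x_i^{m_i}\Bigr),
\]
and the claimed reciprocity identity factors the same way, since $(-1)^k=\prod_i(-1)$ and $f(-m)=\prod_i(-m_i)^{e_i}$. It is therefore enough to prove the case $k=1$ with $f(x)=x^e$, $e\le d$. Indeed, a product of rational functions with denominators dividing $(1-x_i)^{e_i+1}$ has denominator dividing $\prod_i (1-x_i)^{d_i+1}$.

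\emph{One variable.} Write $G_e(x)=\sum_{m\ge0} m^e x^m$. Since $G_e = (x\,\tfrac{d}{dx})^e \tfrac{1}{1-x}$, an induction on $e$ shows that $G_e = P_e(x)/(1-x)^{e+1}$ with $P_e$ a polynomial of degree $\le e$. The inductive step is that applying $x\frac{d}{dx}$ to $P/(1-x)^{j}$ gives $\bigl(xP'(1-x)+jxP\bigr)/(1-x)^{j+1}$. This proves rationality with the stated denominator.

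\emph{Reciprocity in one variable.} The quickest route is the standard identity for the two expansions of the same rational function. Consider the rational function $G(x)=\sum_{m\ge0} g(m)x^m$ for a polynomial $g$. Its expansion around $x=\infty$ is $-\sum_{m\ge1} g(-m)x^{-m}$. Equivalently,
\[
  G(1/x) = -\sum_{m\ge1} g(-m)x^{m}.
\]
I would check this on the basis $g(m)=\binom{m+j}{j}$, which spans the polynomials of degree $\le d$. In that case $G=(1-x)^{-j-1}$, so
\[
  G(1/x)=\frac{(-1)^{j+1}x^{j+1}}{(1-x)^{j+1}} = (-1)^{j+1}\sum_{m\ge 0}\binom{m+j}{j}x^{m+j+1}.
\]
Reindexing with $m'=m+j+1$ gives $(-1)^{j+1}\sum_{m'\ge j+1}\binom{m'-1}{j}x^{m'}$. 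On the other side, $\binom{-m'+j}{j}=(-1)^j\binom{m'-1}{j}$, and this vanishes for $1\le m'\le j$. Hence $-\sum_{m'\ge1}g(-m')x^{m'}$ equals the same expression, and the identity holds on this basis.

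\emph{Assembling the pieces.} Linearity extends the one-variable identity to all polynomials $g$, and the product structure from the reduction step gives the $k$-variable statement. Because $F$ is a rational function, $F(1/x_1,\dots,1/x_k)$ is interpreted as a rational function and then expanded as a power series.

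The main subtlety is this interpretation: in one variable the identity is an equality of rational functions, and one needs to see that it is compatible with the product decomposition. That compatibility is immediate, since both the substitution $x\mapsto 1/x$ and the expansions respect the product structure.
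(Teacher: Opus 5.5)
Your proposal is correct and follows the same route as the paper. Both arguments use linearity to reduce to products $f_1(x_1)\cdots f_k(x_k)$ (you take monomials), and then factor the series and the reciprocity identity variable by variable. The only difference is that you prove the univariate rationality and reciprocity directly, via $(x\,\tfrac{d}{dx})^e\tfrac{1}{1-x}$ and a check on the basis $\binom{m+j}{j}$, whereas the paper cites Stanley's \emph{Enumerative Combinatorics}, Corollary 4.3.1 and Proposition 4.2.3, for the case $k=1$.
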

\begin{proof}
    For $k = 1$, this is Corollary 4.3.1 and Proposition 4.2.3 in \cite{Stanley_2011}. The univariate case then implies the statement for simple tensors in
    \[
        \{f(x_1,\dots,x_k) : \deg_{i} f \leq d_i \text{ for all } i\} = \bigotimes_{i=1}^k \{g(x_i) : \deg g \leq d_i\},
    \]
    i.e., polynomials that can be written as a product $f(x_1,\dots,x_k) = f_1(x_1) \cdots f_k(x_k)$. The full result then follows by linearity.
\end{proof}

The second lemma will be used for re-indexing sums in the proof of \Cref{thm:rational}.

\begin{lemma}
\label{lem:rbound}
    Let $r = \lceil \ell/m \rceil$ and $k \geq 0$.
    \begin{enumerate}
        \item[(i)] For $0 \leq d \leq k\ell $, we have $\lfloor d/(km + 1) \rfloor \leq r - 1$,
        \item[(ii)] For $0 \leq d \leq k\ell - n$ and $km \neq 1$, we have $\lfloor d/(km-1) \rfloor \leq r - 1$.
    \end{enumerate}
\end{lemma}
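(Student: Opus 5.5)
Both parts reduce to a strict upper bound on $d$. Since $d\ge 0$ and the divisor is a positive integer, the inequality $\lfloor d/N\rfloor \le r-1$ is equivalent to $d < rN$. So the plan is to show $d < r(km+1)$ in (i) and $d < r(km-1)$ in (ii). I work under the standing assumptions $1 \le \ell \le \ell_{\max}$ and $m\ge 1$, which give $r=\lceil \ell/m\rceil \ge 1$. The only input is the defining property of the ceiling, $\ell \le rm$.

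For (i), multiply $\ell\le rm$ by $k\ge 0$ to get
\[
d \le k\ell \le krm < krm + r = r(km+1),
\]
using $r \ge 1$ for the strict inequality. This also covers $k=0$, where $d=0$ and the divisor is $1$.

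For (ii), first dispose of degenerate cases. If $k=0$, the range $0\le d\le -n$ is empty, so there is nothing to prove. Since $km \ne 1$, we may therefore assume $km \ge 2$, so the divisor $km-1$ is a positive integer. Then $d \le k\ell - n \le krm - n$, and it suffices to show $krm - n < krm - r$, that is, $r < n$.

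The one real step is this bound $r<n$, and it comes from $\ell \le \ell_{\max} = \lfloor nm/2\rfloor$. That hypothesis gives $\ell/m \le n/2$, hence $r \le \lceil n/2\rceil$, which is strictly less than $n$ once $n \ge 2$. The case $n=1$ must be handled separately. There $\ell_{\max}=\lfloor m/2\rfloor$ and $\ell\ge 1$ force $r=1$ and $\ell \le m/2 < m$. We then argue directly: $d \le k\ell - 1 < km - 1 = r(km-1)$ for $k\ge 1$. I expect no genuine obstacle. The only care needed is in tracking the hypotheses ($\ell\ge 1$, $\ell\le \ell_{\max}$, $km\ne 1$) so the floor is well defined and the empty-range cases are noted.
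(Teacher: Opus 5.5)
Your proof is correct and takes the paper's approach: bound $d$ by the top of its range and compare with $r$ times the divisor using $\ell \le rm$, multiplying out where the paper divides. The one difference is in (ii): you apply $\ell \le rm$ before absorbing the $-n$, which needs $r<n$ and so forces your separate $n=1$ case, whereas the paper uses $\ell < nm$ (from $\ell\le \ell_{\max}$) directly to get $(k\ell-n)/(km-1) < \ell/m \le r$ uniformly in $n$.
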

\begin{proof}
This is immediate for $k = 0$, so assume $k \geq 1$. For part (i), observe that
\[
    \frac{d}{km + 1} \leq \frac{k \ell}{km + 1} < \frac{k \ell}{km} = \ell/m \leq r,
\]
and the floor of a number strictly less than $r$ is at most $r-1$. Similarly, for part (ii), we have
\[
    \frac{d}{km - 1} \leq \frac{k\ell - n}{km-1} = \frac{k\ell m - nm}{km^2 - m} < \frac{k\ell m - \ell}{km^2 - m} = \ell/m \leq r.
\]
\end{proof}

The third lemma is the identity responsible for the improved exponent $n-2$ in the case $2\ell = nm$.

\begin{lemma}
\label{lem:2l=nm}
    Let $n \geq 3$, $2\ell = nm$, and $r = \lceil \ell/m \rceil = \lceil n/2 \rceil$. Then
    \[
        \sum_{j=0}^{r-1}(-1)^j \binom{n}{j} (\ell-jm)^{n-2} = 0.
    \]
\end{lemma}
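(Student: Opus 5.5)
The idea is to recognize the sum as half of a full alternating binomial sum, which vanishes by finite differences. Since $2\ell = nm$, we have $\ell - jm = m(n/2 - j)$. Hence the left-hand side equals
\[
    m^{n-2}\sum_{j=0}^{r-1}(-1)^j\binom{n}{j}\Bigl(\tfrac{n}{2}-j\Bigr)^{n-2}.
\]
Set $p(j) = (n/2 - j)^{n-2}$, a polynomial in $j$ of degree $n-2 < n$. It then suffices to show that the sum of $T_j := (-1)^j\binom{n}{j}p(j)$ over $0 \le j \le r-1$ is zero.

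First, the full sum vanishes. The $n$th finite difference of a polynomial of degree less than $n$ is zero, so
\[
    \sum_{j=0}^{n}(-1)^j\binom{n}{j}p(j) = 0.
\]

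Second, the terms are symmetric under $j \mapsto n-j$. We have $(-1)^{n-j} = (-1)^n(-1)^j$ and
\[
    p(n-j) = \bigl(j - \tfrac{n}{2}\bigr)^{n-2} = (-1)^{n}\bigl(\tfrac n2 - j\bigr)^{n-2},
\]
using $(-1)^{n-2} = (-1)^n$. Together with $\binom{n}{n-j} = \binom{n}{j}$, the two signs $(-1)^n$ cancel, so $T_{n-j} = T_j$.

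Third, we split the full sum into halves. If $n$ is odd, then $r-1 = (n-1)/2$, and the indices $0,\dots,r-1$ and $r,\dots,n$ are swapped by $j \mapsto n-j$. The full sum is therefore twice our sum. If $n$ is even, then $r - 1 = n/2 - 1$, and the only unpaired index is $j = n/2$. There $p(n/2) = 0$ because $n - 2 \ge 1$, which uses the hypothesis $n \ge 3$. Again the full sum is twice our sum. In both cases our sum is half of zero, so it is zero.

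There is no real obstacle in this argument. The only care needed is the sign bookkeeping in the symmetry step, together with checking that the middle term vanishes when $n$ is even. That check is exactly where $n \ge 3$ enters: for $n = 2$ the exponent is $0$ and the middle term equals $\binom{2}{1}(-1)^1 = -2 \neq 0$.
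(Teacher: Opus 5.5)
Your proof is correct and follows the paper's argument: the full alternating sum vanishes by finite differences, the terms satisfy $T(n-j)=T(j)$, and the only unpaired term ($j=n/2$ when $n$ is even) is zero. Factoring out $m^{n-2}$ is only a cosmetic difference.
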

\begin{proof}
    For $0 \leq j \leq n$, let $T(j) = (-1)^j \binom{n}{j} (\ell-jm)^{n-2}$. Since $(\ell-jm)^{n-2}$ has degree $n-2 < n$ as a polynomial in $j$, it follows that $\sum_{j=0}^n T(j) = 0$ (using finite differences). Since $2\ell = nm$, we have $\ell - (n-j)m = -(\ell - jm)$, so
    \[
        T(n-j) = (-1)^{n-j} \binom{n}{j} (-1)^{n-2} (\ell-jm)^{n-2} = T(j).
    \]
    So it suffices to show that $\sum_{r}^{n-r} T(j) = 0$. When $n$ is odd, the sum is empty and thus trivially zero. When $n$ is even, $T(n/2) = 0$ because $(\ell - (n/2)m) = 0$.
\end{proof}

With these lemmas in hand, we are ready to prove \Cref{thm:rational}.

\thmrational*
\begin{proof}
    Using inclusion-exclusion,
    \[
        c(d,km,n) = \sum_{j = 0}^{\lfloor d/(km+1) \rfloor} (-1)^j \binom{n}{j} \binom{d - j(km+1) + n-1}{n-1}.
    \]
    So by Pascal's identity,
    \[
        \delta(d,km,n) = \sum_{j = 0}^{\lfloor d/(km+1) \rfloor} (-1)^j \binom{n}{j} \binom{d - j(km+1) + n-2}{n-2}.
    \]
    Hence, using \Cref{lem:rbound},
    \begin{align*}
        E_{\tope_{\ell,m,n}}(t,q) &= \sum_{k \geq 0} \sum_{d = 0}^{k\ell} \sum_{j=0}^{\lfloor d/(km+1) \rfloor} (-1)^j \binom{n}{j} \binom{d - j(km+1) + n-2}{n-2} q^d t^k \\
        &= \sum_{j = 0}^{r-1} (-1)^j \binom{n}{j} \sum_{k \geq 0} \sum_{e = 0}^{(\ell - jm)k -j} \binom{e + n-2}{n - 2} q^{e + j(km+1)}t^k.
    \end{align*}
    Define $G_\ell$ by extending the inner sum over all $e \geq 0$:
    \begin{align*}
        G_{\ell}(t,q) &= \sum_{j = 0}^{r -1} (-1)^j \binom{n}{j} \sum_{k \geq 0} \sum_{e \geq 0} \binom{e + n-2}{n - 2} q^{e + j(km+1)}t^k \\
        &= \frac{1}{(1-q)^{n-1}} \sum_{j = 0}^{r-1} (-1)^j \binom{n}{j} \frac{q^j}{1-q^{jm} t},
    \end{align*}
    which is rational with denominator dividing $(1-q)^{n-1}\prod_{j=0}^{r - 1} (1-q^{jm} t)$ by \Cref{lem:polyrat}.

    Now consider
    \begin{align*}
        H_\ell(t,q) := G_\ell(t,q) - E_{\tope_{\ell,m,n}}(t,q) &= \sum_{j = 0}^{r -1} (-1)^j \binom{n}{j} \sum_{k \geq 0} \sum_{e \geq (\ell - jm)k - j + 1} \binom{e + n-2}{n - 2} q^{e + j(km+1)}t^k \\
        &= \sum_{j = 0}^{r -1} (-1)^j \binom{n}{j} \sum_{k \geq 0} \sum_{f \geq 0} \binom{f + (\ell - jm)k - j + n - 1}{n - 2} q^{f + \ell k + 1}t^k ,
    \end{align*}
    which is rational with denominator dividing $(1-q)^{n-1}(1-q^\ell t)^{n-1}$ by \Cref{lem:polyrat}.
    
    Hence $E_{\tope_{\ell,m,n}}(t,q)$ is rational with denominator dividing 
    \[
        (1-q)^{n-1}(1-q^\ell t)^{n-1} \prod_{j=0}^{r - 1} (1-q^{jm} t).
    \]
    If we write $E_{\tope_{\ell,m,n}}(t,q) = N/(1-q)^{n-1}D$, where $D = (1-q^\ell t)^{n-1} \prod_{j=0}^{r - 1} (1-q^{jm} t)$, then $N/(1-q)^{n-1} = D\cdot E_{\tope_{\ell,m,n}}(t,q)$. The coefficient of $t^k$ in $D \cdot E_{\tope_{\ell,m,n}}(t,q)$ is a polynomial in $\mathbb{Z}[q]$, so we must have that $(1-q)^{n-1}$ divides the coefficient of $t^k$ in $N$ for all $k$. Hence $(1-q)^{n-1}$ divides $N$. It follows that we can write $E_{\tope_{\ell,m,n}}(t,q)$ as a rational function with denominator dividing $(1-q^\ell t)^{n-1} \prod_{j=0}^{r - 1} (1-q^{jm} t)$.
    
    Now, the coefficient of $k^{n-2}$ in
    \[
        \sum_{j = 0}^{r -1} (-1)^j \binom{n}{j}\sum_{f \geq 0} \binom{f + (\ell - jm)k - j + n-1}{n - 2} q^{f + 1}
    \]
    is $\frac{q}{1-q} \cdot \frac{1}{(n-2)!}$ times
    \[
        \sum_{j = 0}^{r -1} (-1)^j \binom{n}{j}(\ell-jm)^{n-2}.
    \]
    When $2\ell = nm$, this sum is 0 by \Cref{lem:2l=nm}. Hence we get the improved denominator in this case.

    For reciprocity, we have
    \begin{align*}
        G_{\ell}(1/t,1/q) &= \frac{(-1)^n q^{n-1}}{(1-q)^{n-1}}\sum_{j = 0}^{r - 1} (-1)^j \binom{n}{j} \frac{q^{j(m-1)}t}{1 - q^{jm} t} \\
        &= (-1)^n q^{n-1} \sum_{j = 0}^{r - 1}(-1)^j \binom{n}{j} \sum_{k \geq 0} \sum_{e \geq 0} \binom{e + n-2}{n-2}q^{e + jmk + j(m-1)}t^{k + 1} \\
        H_\ell(1/t,1/q) &= \sum_{j = 0}^{r - 1} (-1)^j \binom{n}{j} \sum_{k\geq 1} \sum_{f \geq 1} \binom{-f - (\ell - jm)k - j + n-1}{n-2} q^{f + \ell k -1} t^k \\
        &= \sum_{j = 0}^{r - 1} (-1)^j \binom{n}{j} \sum_{k \geq 0} \sum_{f \geq 0} \binom{-f - (\ell - jm)k - \ell + j(m-1) + n-2}{n-2} q^{f + \ell k + \ell} t^{k+1} \\
        &= (-1)^n \sum_{j = 0}^{r - 1} (-1)^j \binom{n}{j} \sum_{k \geq 0} \sum_{f \geq 0} \binom{f + (\ell - jm)k + \ell - j(m-1) - 1}{n-2} q^{f + \ell k + \ell} t^{k+1} \\
        &= (-1)^n q^{n-1} \sum_{j = 0}^{r - 1} (-1)^j \binom{n}{j} \sum_{k \geq 0} \sum_{e \geq (\ell - jm)k + \ell - j(m-1) - n + 1} \binom{e + n - 2}{n-2} q^{e + jmk +j(m-1)} t^{k+1} \\
    \end{align*}
    by \Cref{lem:polyrat}. By \Cref{cor:intEhrTope} and \Cref{lem:rbound} (when $km = 1$, the inner sum is empty),
    \begin{align*}
        \overline{E}_{\tope_{\ell,m,n}}(t,q) &= \sum_{k \geq 0}\sum_{d = 0}^{k\ell -n} \delta(d,km-2,n) q^d t^k  \\
        &= \sum_{k\geq 0} \sum_{d = 0}^{k \ell - n} \sum_{j = 0}^{\lfloor d/(km-1) \rfloor} (-1)^j \binom{n}{j} \binom{d - j(km-1) + n-2}{n-2}q^d t^{k} \\
        &= \sum_{j = 0}^{r - 1} (-1)^j \binom{n}{j} \sum_{k \geq 0} \sum_{e = 0}^{(\ell - jm)k + j - n} \binom{e + n-2}{n-2} q^{e + j(km-1)}t^{k} \\
        &= \sum_{j = 0}^{r - 1} (-1)^j \binom{n}{j} \sum_{k \geq 0} \sum_{e = 0}^{(\ell - jm)k + \ell - j(m-1) - n} \binom{e + n-2}{n-2} q^{e + jmk + j(m-1)}t^{k+1} \\
    \end{align*}
    Hence
    \[
        q^{n-1}\overline{E}_{\tope_{\ell,m,n}}(t,q) = (-1)^n(G_\ell(1/t,1/q) - H_\ell(1/t,1/q)) = (-1)^n E_{\tope{\ell,m,n}}(1/t,1/q).
    \]
\end{proof}

\begin{corollary}
    For hypersimplices, we have that for $1 \leq \ell \leq \ell_{\max} = \lfloor n/2 \rfloor$, $E_{\Delta_{\ell,n}}(t,q)$ is rational with denominator
    \[
        (1-q^\ell t)^{n-1} \prod_{j=0}^{\ell - 1} (1-q^j t).
    \]  
    Moreover, if $n \geq 3$ and $2\ell = n$, the exponent $n-1$ on $(1-q^\ell t)$ improves to $n-2$.
\end{corollary}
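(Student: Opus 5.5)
This corollary follows by specializing \Cref{thm:rational} to $m=1$ through the identification $m'\Delta_{\ell,n} = \tope_{\ell m',m',n}$. Since $\Delta_{\ell,n} = \tope_{\ell,1,n}$, the harmonic algebra $\mathcal{H}_{\Delta_{\ell,n}}$ and the harmonic algebra of $\tope_{\ell,1,n}$ coincide as bigraded algebras: their $x_0$-degree $k$ pieces are $V_{k\Delta_{\ell,n}} = V_{\tope_{k\ell,k,n}} = V_{k\tope_{\ell,1,n}}$. Hence $E_{\Delta_{\ell,n}}(t,q) = E_{\tope_{\ell,1,n}}(t,q)$ as power series.

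Next I check the hypotheses of \Cref{thm:rational} with $m=1$. In that case $\ell_{\max} = \lfloor nm/2 \rfloor = \lfloor n/2 \rfloor$, so the range $1 \leq \ell \leq \lfloor n/2\rfloor$ is exactly the range allowed by the theorem. Also $r = \lceil \ell/m \rceil = \ell$. The denominator in \Cref{thm:rational} therefore becomes
\[
(1-q^\ell t)^{n-1}\prod_{j=0}^{\ell-1}(1-q^{j}t),
\]
which is the denominator claimed in the corollary.

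For the improved exponent, the condition $2\ell = nm$ with $m=1$ reads $2\ell = n$. So for $n\ge 3$, \Cref{thm:rational} lowers the exponent on $(1-q^\ell t)$ from $n-1$ to $n-2$, as claimed.

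There is no real obstacle here. The only point that needs care is the identification of the $q$-Ehrhart series of $\Delta_{\ell,n}$ with that of $\tope_{\ell,1,n}$. This is immediate because both are literally the same polytope in $\mathbb{R}^n$, and the $q$-Ehrhart series is defined through dilates, so the dilates agree as well. The range $\ell > n/2$ is not needed for the corollary, though it would reduce to this case through the affine equivalence $\Delta_{\ell,n}\cong\Delta_{n-\ell,n}$.
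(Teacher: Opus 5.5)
Your proposal is correct and matches the paper's intended argument: the paper gives no separate proof because the corollary is exactly the $m=1$ specialization of \Cref{thm:rational}. That specialization uses $\Delta_{\ell,n}=\tope_{\ell,1,n}$, $\ell_{\max}=\lfloor n/2\rfloor$, $r=\lceil \ell/1\rceil=\ell$, and the condition $2\ell=nm$ becoming $2\ell=n$, just as you wrote.
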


\begin{remark}
    We note an interesting identity relating $q$-Ehrhart series to classical Ehrhart series found while trying to prove \Cref{thm:rational}:
    \[
        E_{\tope_{\ell,m,n}}(t,q) = E_{\tope_{\ell,m,n}}(q^\ell t) + (1-q)\sum_{k \geq 1}\sum_{\substack{0 \leq d < \ell k \\ \gcd(d,k) = 1}} \left(E_{\tope_{d,mk,n}}(q^d t^k) - 1 \right).
    \]
\end{remark}

\begin{example}
Some small examples of $q$-Ehrhart series of hypersimplices are shown in \Cref{tab:small} and \Cref{tab:smallint}.
 
\begin{table}[h!]\centering\footnotesize\renewcommand{\arraystretch}{1.5}
    \begin{tabular}{c c l}
        \hline
        $n$ & $\ell$ & \multicolumn{1}{c}{$E_{\Delta_{\ell,n}}(t,q)$}\\ \hline\noalign{\vskip 3pt}
        $n$ & $1$ & $\dfrac{1}{(1-t)(1-qt)^{n-1}}$ \\[7pt]
        $4$ & $2$ & $\dfrac{1 + 2qt + q^{2}t^{2}}{(1-t)(1-qt)(1-q^{2}t)^{2}}$ \\[7pt]
        $5$ & $2$ & $\dfrac{1 + 3qt + q^{2}t + q^{2}t^{2} - 2q^{3}t^{2} + q^{4}t^{2} - 4q^{4}t^{3} - q^{5}t^{3}}{(1-t)(1-qt)(1-q^{2}t)^{4}}$ \\[7pt]
        $6$ & $2$ & $\dfrac{1 + 4qt + 4q^{2}t + q^{2}t^{2} + 5q^{4}t^{2} - 10q^{4}t^{3} - 4q^{5}t^{3} - q^{6}t^{4}}{(1-t)(1-qt)(1-q^{2}t)^{5}}$ \\[7pt]
        $7$ & $2$ & $\dfrac{1 + 5qt + 8q^{2}t + q^{2}t^{2} + 5q^{3}t^{2} + 15q^{4}t^{2} - 20q^{4}t^{3} - 9q^{5}t^{3} + q^{6}t^{3} - 6q^{6}t^{4} - q^{7}t^{4}}{(1-t)(1-qt)(1-q^{2}t)^{6}}$ \\[7pt]
        \hline
    \end{tabular}
\caption{$q$-Ehrhart series $E_{\Delta_{\ell,n}}(t,q)$.}
\label{tab:small}
\end{table}

\begin{table}[h!]\centering\footnotesize\renewcommand{\arraystretch}{1.5}
    \begin{tabular}{c c l}
        \hline
        $n$ & $\ell$ & \multicolumn{1}{c}{$\overline{E}_{\Delta_{\ell,n}}(t,q)$}\\ \hline\noalign{\vskip 3pt}
        $n$ & $1$ & $\dfrac{t^{n}}{(1-t)(1-qt)^{n-1}}$ \\[7pt]
        $4$ & $2$ & $\dfrac{t^{2} + 2qt^{3} + q^{2}t^{4}}{(1-t)(1-qt)(1-q^{2}t)^{2}}$ \\[7pt]
        $5$ & $2$ & $\dfrac{t^{3} + 4qt^{3} - qt^{4} + 2q^{2}t^{4} - q^{3}t^{4} - q^{3}t^{5} - 3q^{4}t^{5} - q^{5}t^{6}}{(1-t)(1-qt)(1-q^{2}t)^{4}}$ \\[7pt]
        $6$ & $2$ & $\dfrac{t^{3} + 4qt^{4} + 10q^{2}t^{4} - 5q^{2}t^{5} - q^{4}t^{5} - 4q^{4}t^{6} - 4q^{5}t^{6} - q^{6}t^{7}}{(1-t)(1-qt)(1-q^{2}t)^{5}}$ \\[7pt]
        $7$ & $2$ & $\dfrac{t^{4} + 6qt^{4} - qt^{5} + 9q^{2}t^{5} + 20q^{3}t^{5} - 15q^{3}t^{6} - 5q^{4}t^{6} - q^{5}t^{6} - 8q^{5}t^{7} - 5q^{6}t^{7} - q^{7}t^{8}}{(1-t)(1-qt)(1-q^{2}t)^{6}}$ \\[7pt]
        \hline
    \end{tabular}
\caption{Interior $q$-Ehrhart series $\overline{E}_{\Delta_{\ell,n}}(t,q)$.}
\label{tab:smallint}
\end{table}

For $(\ell,n) = (3,6)$,
\[
E_{\Delta_{3,6}}(t,q)=\frac{\begin{array}{c}1 + (4q + 8q^{2} + q^{3})\,t + (q^{2} + 11q^{3} + 10q^{4} - q^{5} + q^{6})\,t^{2}\\[2pt]+ (-q^{4} + q^{5} - 10q^{6} - 11q^{7} - q^{8})\,t^{3} + (-q^{7} - 8q^{8} - 4q^{9})\,t^{4} - q^{10}\,t^{5}\end{array}}{(1-t)(1-qt)(1-q^{2}t)(1-q^{3}t)^{4}}.
\]
and
\[
\overline{E}_{\Delta_{3,6}}(t,q)=\frac{\begin{array}{c}t^{2} + (4q + 8q^{2} + q^{3})\,t^{3} + (q^{2} + 11q^{3} + 10q^{4} - q^{5} + q^{6})\,t^{4}\\[2pt]+ (-q^{4} + q^{5} - 10q^{6} - 11q^{7} - q^{8})\,t^{5} + (-q^{7} - 8q^{8} - 4q^{9})\,t^{6} - q^{10}\,t^{7}\end{array}}{(1-t)(1-qt)(1-q^{2}t)(1-q^{3}t)^{4}}.
\]

Observe that $\overline{E}_{\Delta_{3,6}}(t,q) = t^2E_{\Delta_{3,6}}(t,q)$. This holds more generally whenever $\ell = n/2$ because in these cases $\operatorname{int}(m\Delta_{\ell,n}) \cap \mathbb{Z}^n \cong \lpts{(m-2)\Delta_{\ell,n}}$.
\end{example}

The denominator has $n$ factors if and only if $r=1$, or $r=2$ and $2\ell = nm$. The case $r=1$ happens when $\ell \leq m$, i.e., when $\tope_{\ell,m,n}$ is a simplex. The case $r=2$ and $2\ell = nm$ forces $n \in \{3,4\}$ and gives two non-simplex families:
\begin{itemize}
    \item dilates of the hexagon $\tope_{3,2,3}$,
    \item dilates of the octahedron $\tope_{2,1,4} = \Delta_{2,4}$.
\end{itemize}
In both cases, computational evidence suggests that the numerator always has nonnegative coefficients as a polynomial in $q,t$. We computed $E_{\tope_{\ell,m,n}}(t,q)$ for all $n \leq 9$, $m \leq 7$, and $1 \leq \ell \leq \ell_{\max}$. In all computed cases, the numerator lies in $\mathbb{N}[t,q]$ if and only if $\nu = n$. This evidence suggests that part (ii) of \Cref{conj:series} holds for a larger class of polytopes than just simplices.

\section{The harmonic algebra}
\label{sec:harm}

Let $V_{m\Delta_{\ell,n}}$ denote the harmonic space of the ideal $\gri{\lpts{m\Delta_{\ell,n}}}$. Let $\mathcal{H}_{\Delta_{\ell,n}}$ denote the harmonic algebra of the hypersimplex $\Delta_{\ell,n}$. Recall from \Cref{sec:back} that this algebra is defined as
\[
    \mathcal{H}_{\Delta_{\ell,n}} = \bigoplus_{m \geq 0} \mathbb{Q} \cdot x_0^m \otimes V_{m\Delta_{\ell,n}}.
\]
In this section we will show that $\mathcal{H}_{\Delta_{\ell,n}}$ is generated as an algebra in $x_0$-degree one by $V_{\Delta_{\ell,n}}$. That is, the algebra map
\[
    \mathbb{Q}[V_{\Delta_{\ell,n}}] \to \mathcal{H}_{\Delta_{\ell,n}} 
\]
determined by $g \mapsto x_0 \cdot g$ for $g \in V_{\Delta_{\ell,n}}$ is a surjection.

We will assume throughout this section that $\ell \leq \ell_{\max} = \lfloor n/2 \rfloor$. We recall that $\Delta_{\ell,n} \cong \Delta_{n-\ell,n}$, so we lose nothing by making this restriction on $\ell$. We denote by $G(\ell,m,n)$ the set of loopless multigraphs on $n$ vertices labeled $\{1,2,\dots,n\}$ with number of edges $\leq \ell m$ and vertex degrees $\leq m$. For $G \in G(\ell,m,n)$, we define
\[
    \delta_G := \prod_{(i,j) \in G}(x_i - x_j)
\]
where $(i,j)$ ranges over all edges of $G$ (with multiplicity) with $i < j$. So $\delta_G$ is homogeneous with degree equal to the number of edges in $G$ counted with multiplicity. An example is given in \Cref{ex:easygraph}. The next lemma shows that $\delta_G \in V_{m \Delta_{\ell,n}}$. We will often tacitly identify the graph $G \in G(\ell,m,n)$ with the polynomial $x_0^m\delta_G \in \mathbb{Q} \cdot x_0 \otimes V_{m\Delta_{\ell,n}} \subset \mathcal{H}_{\Delta_{\ell,n}}$.

\begin{lemma}
    We have that $\delta_G \in V_{m\Delta_{\ell,n}}$ for $G \in G(\ell,m,n)$.
\end{lemma}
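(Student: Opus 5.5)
We need $\delta_G \in V_{m\Delta_{\ell,n}} = (\gri{Z_{\ell m,m,n}})^\perp$ for every $G\in G(\ell,m,n)$. Since $m\Delta_{\ell,n}=\tope_{\ell m,m,n}$ and $\ell m\le \ell_{\max}(m)=\lfloor nm/2\rfloor$ (because $\ell\le\lfloor n/2\rfloor$), \Cref{thm:gri} gives the explicit generators
\[
\gri{Z_{\ell m,m,n}}=\langle x_1^{m+1},\dots,x_n^{m+1},\,x_1+\cdots+x_n,\,\text{all monomials of degree } \ell m+1\rangle .
\]
The harmonic space of an ideal is the set of $g$ killed (under $\odot$) by every generator. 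It is killed by all of $I$ once it is killed by a generating set, since $(hf)\odot g = h\odot(f\odot g)$. So the plan is to check that $\delta_G$ is annihilated by each of the three kinds of generators.

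\emph{Degree truncation.} $\delta_G$ is homogeneous of degree $|E(G)|\le \ell m$. Any monomial of degree $\ell m+1$ acting as a differential operator lowers degree by $\ell m+1$, so it annihilates $\delta_G$.

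\emph{Linear form.} The operator $\partial_1+\cdots+\partial_n$ is a derivation. It kills every factor $x_i-x_j$, since $(\partial_1+\cdots+\partial_n)(x_i-x_j)=1-1=0$. By the Leibniz rule it therefore kills the product $\delta_G$. Equivalently, $\delta_G$ is invariant under translation by $(1,\dots,1)$.

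\emph{Cube relations.} The variable $x_i$ appears only in the factors corresponding to edges incident to $i$. The number of such factors is $\deg_G(i)\le m$, since $G$ is loopless and each edge at $i$ contributes exactly one factor containing $x_i$ linearly. Hence $\delta_G$ has degree at most $m$ in $x_i$, and $\partial_i^{m+1}\delta_G=0$.

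Since $\delta_G$ is annihilated by every generator of $\gri{Z_{\ell m,m,n}}$, it lies in $V_{m\Delta_{\ell,n}}$. The only point needing care is the first step: confirming that the hypothesis $\ell\le\lfloor n/2\rfloor$ puts $\ell m$ in the range $\ell m\le\lfloor nm/2\rfloor$ where \Cref{thm:gri} applies. This holds because $\ell m\le nm/2$ and $\ell m$ is an integer. The remaining steps are routine.
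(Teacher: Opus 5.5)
Your proposal is correct and follows the paper's proof: you check that each of the three generator types from \Cref{thm:gri} annihilates $\delta_G$, namely $x_i^{m+1}$ by the degree bound in $x_i$, the degree-$(\ell m+1)$ monomials by the total degree bound, and the linear form by the Leibniz rule. Your two extra checks are correct details the paper leaves implicit: that annihilation by a generating set suffices, and that $\ell m \le \lfloor nm/2 \rfloor$ so \Cref{thm:gri} applies.
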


\begin{proof}
We can check that for each generator $f$ of $\gri{\lpts{m\Delta_{\ell,n}}}$ we have $f \odot \delta_G = 0$. By \Cref{thm:gri}, there are three types of generators of this ideal:
\begin{enumerate}
    \item $x_i^{m+1}$
    \item $x_1^{a_1}\dots x_n^{a_n}$ for $a_1 + \dots + a_n = \ell m + 1$
    \item $L = x_1 + \dots + x_n$
\end{enumerate}

For type (1), $x_i^{m+1} \odot \delta_G = 0$ because the vertex degrees of $G$ are at most $m$.

For type (2), $x_1^{a_1}\dots x_n^{a_n} \odot \delta_G = 0$ because there are at most $\ell m$ edges in $G$.

For type (3), we use the product rule
\begin{align*}
    L \odot \delta_G  = \sum_{(i,j) \in G} (L \odot  (x_i-x_j)) \cdot \prod_{\substack{(a,b) \in G \\ (a,b) \neq (i,j)}} (x_a - x_b) = 0,
\end{align*}
where the last equality uses that $L \odot (x_i - x_j) = 0$ for all $(i,j)$.
\end{proof}

For an element $G \in G(\ell,m,n)$ with $d$ edges, we define a tableau of shape $2 \times d$ as follows: order the edges lexicographically and fill the tableau so that the first row records the smaller endpoints and the second the larger. The first row $(a_1,\dots,a_d)$ is automatically non-decreasing. If the second row $(b_1,\dots,b_d)$ is also non-decreasing, then we have a two-row semistandard Young tableau $T$. We will let $S(\ell,m,n)$ denote the set of all such semistandard tableaux. The set $S(\ell,m,n)$ can be equivalently defined as all graphs $G \in G(\ell,m,n)$ satisfying a nonnesting property: for all vertices $a < b < c < d$, $G$ cannot have both edges $(a,d)$ and $(b,c)$. 

\begin{example}
\label{ex:easygraph}
    Let $G$ be the graph on $\{1,2,3,4\}$ with edges $(1,3)$, $(2,4)$ and a double edge
    $(3,4)$. Ordering the edges lexicographically gives
    \[
      (1,3),\quad (2,4),\quad (3,4),\quad (3,4),
    \]
    so the first row is $(a_1,\dots,a_4)=(1,2,3,3)$ and the second is
    $(b_1,\dots,b_4)=(3,4,4,4)$. Both rows are non-decreasing, so $G$ gives a
    two-row semistandard Young tableau.
    \[
      T \;=\;
      \begin{tikzpicture}[baseline=-2mm,x=6.2mm,y=6.2mm]
        \foreach \c/\v in {0/1,1/2,2/3,3/3}{\draw (\c,0) rectangle ++(1,1);
          \node at (\c+0.5,0.5) {$\v$};}
        \foreach \c/\v in {0/3,1/4,2/4,3/4}{\draw (\c,-1) rectangle ++(1,1);
          \node at (\c+0.5,-0.5) {$\v$};}
      \end{tikzpicture}
      \qquad\text{for}\qquad
      G \;=\; 
      \begin{tikzpicture}[baseline=-1mm,x=1.05cm,y=1cm,font=\small,
          v/.style={circle,draw,inner sep=0pt,minimum size=6.4mm,fill=white}]
        \foreach \x/\lab in {0/1,1/3,2/4,3/2}{\node[v] (m\x) at (\x,0) {$\lab$};}
        \draw (m0) -- (m1);
        \draw (m1) to[bend left=42] (m2);
        \draw (m1) to[bend right=42] (m2);
        \draw (m2) -- (m3);
      \end{tikzpicture}
    \]
    We have that $G\in S(\ell,m,n)$ for $n = 4$ and any $\ell,m$ with $m\ge3$ and $\ell m\ge4$. The polynomial $\delta_G$ is
    \[
        \delta_G = (x_1 - x_3)(x_2-x_4)(x_3 - x_4)^2.
    \]
\end{example}

\begin{lemma}
\label{lem:span}
    For $m \geq 1$, $V_{m\Delta_{\ell,n}}$ is linearly spanned by $\{\delta_T: T \in S(\ell, m, n)\}$. The dimension of $V_{m\Delta_{\ell,n}}$ is $c(\ell m, m ,n)$ which also equals the number of realizable first rows of elements of $G(\ell,m,n)$.
\end{lemma}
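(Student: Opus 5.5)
The plan is to trap $\operatorname{span}\{\delta_T\}$ between a lower bound coming from leading monomials and the upper bound $\dim V_{m\Delta_{\ell,n}} = c(\ell m,m,n)$. I would work one degree $d \leq \ell m$ at a time. The preceding lemma gives $\operatorname{span}\{\delta_G : G \in G(\ell,m,n)\} \subseteq V_{m\Delta_{\ell,n}}$. By \Cref{thm:gri} and \eqref{eq:dim}, applied to $m\Delta_{\ell,n} = \tope_{\ell m,m,n}$ with $\ell m \leq \ell_{\max}(m)$, we have $\dim V_{m\Delta_{\ell,n}} = c(\ell m,m,n)$, with degree-$d$ piece of dimension $\delta(d,m,n)$ by the corollaries to \Cref{thm:gri}. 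So it suffices to show two things: every $\delta_G$ is a combination of $\delta_T$ with $T \in S(\ell,m,n)$, and the $\delta_T$ already span a space of dimension at least $c(\ell m,m,n)$.

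\textbf{Step 1: straightening.} For $a<b<c<d$ one checks directly that
\[
(x_a-x_d)(x_b-x_c) = (x_a-x_c)(x_b-x_d) - (x_a-x_b)(x_c-x_d).
\]
If $G$ contains the nesting pair $(a,d),(b,c)$, I would replace it by either the crossing pair $(a,c),(b,d)$ or the disjoint pair $(a,b),(c,d)$. Both replacements keep every vertex degree and the edge count, so the resulting graphs stay in $G(\ell,m,n)$. The statistic $\sum_{(i,j)\in G}(j-i)^2$ strictly decreases under each replacement:
\begin{itemize}
\item for the crossing pair, $(d-a)^2+(c-b)^2 > (c-a)^2+(d-b)^2$ because $ad+bc<ac+bd$;
\item for the disjoint pair, the strict decrease is clear.
\end{itemize}
Hence the process terminates. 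Since nonnesting is equivalent to the semistandard condition, every $\delta_G$ lies in $\operatorname{span}\{\delta_T : T \in S(\ell,m,n)\}$.

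\textbf{Step 2: independence via leading terms.} I would use the lex order with $x_1 > x_2 > \cdots > x_n$. Each factor $(x_i - x_j)$ with $i<j$ has leading term $x_i$, so the leading monomial of $\delta_G$ is $\prod_k x_{a_k}$, where $(a_1,\dots,a_d)$ is the first row of the tableau of $G$. Consequently the polynomials $\delta_G$ attached to distinct first rows have distinct leading monomials and are linearly independent. Choosing one realizing graph per first row and straightening it does not change the first-row count available: by Step 1, the span of the $\delta_T$ contains all of these independent $\delta_G$. Therefore
\[
\dim \operatorname{span}\{\delta_T\} \geq \#\{\text{realizable first rows}\}.
\]
Conversely, every leading monomial of an element of $V$ must come from such a first row, since $V$ is spanned by the $\delta_G$ after spanning is established. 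Hence the number of realizable first rows is also at most $\dim V$. Once the count is known, all of these quantities coincide.

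\textbf{Step 3: counting realizable first rows (main obstacle).} I must show that the number of realizable first rows of $d$-edge graphs equals $\delta(d,m,n)$, or at least is at least $\delta(d,m,n)$; summing over $d$ then gives $c(\ell m,m,n)$. A first row is a multiset with multiplicity $\mu_i$ at vertex $i$, and it must be realizable with degrees $\leq m$. By a Hall or greedy argument, matching the smaller endpoints in order to the largest available vertices, I expect realizability to be equivalent to a ballot-type condition: for each $k$, the first-row entries $\geq k$ can be absorbed as second-row entries by vertices $>k$ with the remaining capacity $m-\mu_j$. I would then count multisets satisfying this condition and match the count with $\delta(d,m,n) = c(d,m,n)-c(d-1,m,n)$, either through a reflection-type bijection or by induction on $n$, deleting vertex $n$, which can only be a larger endpoint. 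This step is the genuinely combinatorial one; the total bound $c(\ell m,m,n)$ follows by telescoping once the degreewise count holds.
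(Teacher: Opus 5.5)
There is a genuine gap: your Step 3 is left as a plan, and it carries the whole dimension statement.

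Your Steps 1 and 2 are sound. The three-term relation preserves vertex degrees and edge count. The potential $\sum (j-i)^2$ drops strictly (in the crossing case by $2(b-a)(d-c)$), and strict nonnesting is exactly semistandardness. Choosing one graph per realizable first row then gives, via lex leading monomials, an independent family inside $\operatorname{span}\{\delta_T\}\subseteq V_{m\Delta_{\ell,n}}$. But this only yields
\[
\#\{\text{realizable first rows}\} \;\le\; \dim \operatorname{span}\{\delta_T : T \in S(\ell,m,n)\} \;\le\; \dim V_{m\Delta_{\ell,n}} = c(\ell m,m,n).
\]
Both assertions of the lemma are equivalent to the reverse inequality: that the $\delta_T$ span, and that the first-row count equals $c(\ell m,m,n)$. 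That reverse inequality is exactly your Step 3. Nothing you have proved rules out the $\delta_G$ spanning a proper subspace of $V_{m\Delta_{\ell,n}}$.

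The ``Conversely'' sentence in Step 2 does not fill this in. It assumes the spanning you are trying to prove. Even granting spanning, the inference to ``every leading monomial of an element of $V$ is a first row'' fails in general, because cancellation among spanning vectors can create new leading monomials; ruling that out here is equivalent to the count. And the inequality you draw from it, $\#\{\text{first rows}\}\le \dim V$, is the easy direction you already had from independence.

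What is missing is a genuine enumeration. Hall's theorem, applied to the nested neighbourhoods $\{j>a\}$ with capacities $m-\mu_j$, does give the criterion you anticipate. A weak composition $\mu$ is a realizable first row iff $\mu_j\le m$ for all $j$ and $\mu_k+2\sum_{j>k}\mu_j\le (n-k)m$ for all $k$ (in particular $\mu_n=0$). You must then prove that for $d\le\lfloor nm/2\rfloor$ exactly $\delta(d,m,n)=c(d,m,n)-c(d-1,m,n)$ of the compositions $\mu\in\{0,\dots,m\}^n$ of $d$ satisfy this. Equivalently, the violators must be equinumerous with all compositions of $d-1$ in $\{0,\dots,m\}^n$. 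For $m=1$ this is the ballot identity $\binom{n}{d}-\binom{n}{d-1}$. For $m\ge 2$ it needs an actual bijective or inductive proof, and this count is the entire content of the dimension statement.

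The paper does not prove it internally either. It imports both the spanning and the count from Lemma 3.3 and Proposition 3.4 of \cite{zigzags}. As written, your proposal establishes only the lower-bound half: it needs either a proof of this enumeration or a citation for it.
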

\begin{proof}
    See \cite[Lemma 3.3 and Proposition 3.4]{zigzags}.
\end{proof}

\subsection{Multiplicative structure}

The remainder of this section will primarily use techniques and objects in graph theory, so we begin by recalling some standard definitions. A graph is said to be \defn{$k$-regular} if every vertex has degree $k$. A \defn{$k$-factor} of a graph $G$ is a $k$-regular subgraph containing every vertex of $G$. So if a $k$-regular graph $R$ has an $m$-factor $F$, then the edge complement $R-F$ is a $(k-m)$-factor of $R$.

Let $e_G(i,j)$ denote the number of edges between $i$ and $j$ in a multigraph (the notation is symmetric in $i$ and $j$; we do not require that $i < j$ here). Given $G_1 \in G(\ell,m,n)$ and $G_2 \in G(\ell, m',n)$, we define $G_1\cdot G_2 \in G(\ell, m + m', n)$ to be the multigraph on $\{1,\dots,n\}$ with $e_{G_1\cdot G_2}(i,j) = e_{G_1}(i,j) + e_{G_2}(i,j)$ for all $i,j$. Correspondingly, $x_0^m\delta_{G_1}\cdot x_0^{m'}\delta_{G_2} = x_0^{m+m'}\delta_{G_1 \cdot G_2}$ in $\mathcal{H}_{\Delta_{\ell,n}}$. We write
\[
    G(\ell,m,n) \cdot G(\ell,m',n) = \{G_1\cdot G_2 : G_1 \in G(\ell,m,n) \text{ and } G_2 \in G(\ell,m',n)\} \subseteq G(\ell,m + m', n).
\]
In this section, we will prove that the above inclusion is always an equality when $m$ and $m'$ are both even. The other cases will be investigated in \Cref{app:modd}. As a first result in that direction, we have the following lemma.

\begin{lemma}
    If $G \in G(\ell,m,n) \cdot G(\ell,m',n)$, then $G$ is a subgraph of an $(m + m')$-regular graph with an $m$-factor.
\end{lemma}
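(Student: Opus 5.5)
The plan is to regularize the two factors separately and then multiply them. Write $G = G_1 \cdot G_2$ with $G_1 \in G(\ell,m,n)$ and $G_2 \in G(\ell,m',n)$. I will build an $m$-regular loopless multigraph $R_1 \supseteq G_1$ and an $m'$-regular loopless multigraph $R_2 \supseteq G_2$ on a \emph{common} vertex set $W \supseteq [n]$. Then $R := R_1 \cdot R_2$, meaning $e_R(i,j) = e_{R_1}(i,j) + e_{R_2}(i,j)$, is $(m+m')$-regular. It contains $G_1 \cdot G_2 = G$ as a subgraph, and $R_1$ is an $m$-factor of $R$ whose edge complement is $R_2$. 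So everything reduces to one regularization claim.

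\emph{Regularization claim.} Every loopless multigraph $H$ on $[n]$ with all degrees $\leq k$ is a subgraph of a $k$-regular loopless multigraph on $W = [n] \sqcup [n]'$, where $[n]' = \{1',\dots,n'\}$ is a disjoint copy of $[n]$. I will use the standard doubling construction:
\begin{enumerate}
    \item take $H$ on $[n]$;
    \item add its mirror image $H'$ on $[n]'$;
    \item for each $i \in [n]$, add $k - \deg_H(i) \geq 0$ parallel edges between $i$ and $i'$.
\end{enumerate}
Each vertex $i$ then has degree $\deg_H(i) + (k-\deg_H(i)) = k$, and the same holds for $i'$. No loops are created since $i \neq i'$, and $H$ is an induced copy on $[n]$. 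Isolated vertices of $H$ simply receive $k$ parallel edges to their twins. Only the degree bound $\leq k$ is used; the edge bound $\leq \ell k$ plays no role.

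The only point to watch is that $R_1$ and $R_2$ must live on the same vertex set. This is why the doubling is done uniformly to $[n] \sqcup [n]'$ for both $G_1$ (with $k=m$) and $G_2$ (with $k=m'$), rather than adding an ad hoc number of auxiliary vertices to each. The verification that $R = R_1 \cdot R_2$ is $(m+m')$-regular and contains $G$ is then immediate from additivity of edge multiplicities and of degrees.

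The main obstacle is interpretive rather than technical. If "subgraph" is meant to keep the vertex set exactly $[n]$, the doubling trick is not available. Regularizing on $[n]$ can then genuinely fail: for example, a single vertex of deficient degree cannot be fixed without loops. In that case I would instead try to exploit $\ell \leq \lfloor n/2 \rfloor$ and the edge bound $|E(G_1)| \leq \ell m$ to pair up deficient vertices and add edges among them, handling parity via the handshake lemma. I expect that version to need the finer structural analysis deferred to \Cref{app:modd}. For the lemma as used here, as a necessary condition for membership in the product, the doubled vertex set suffices.
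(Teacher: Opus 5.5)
Your proof is correct and is essentially the paper's argument: your $R_1 \cdot R_2$ is exactly the paper's graph (two copies of $G$, with $m+m'-\deg_G(v)$ parallel edges joining each $v$ to its copy), and your $R_1$ is exactly the paper's $m$-factor ($G_1$ in both copies plus $m-\deg_{G_1}(v)$ of the edges between the copies of $v$); you simply assemble it factor by factor rather than carving it out afterwards. Your closing worry about keeping the vertex set equal to $[n]$ is unnecessary, since the paper's proof also adds the doubled vertices.
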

\begin{proof}
    Take two disjoint copies of $G$ and connect each vertex $v \in G$ with its copy using $m + m' - \deg(v)$ edges. This produces an $(m + m')$-regular graph $R$. Write $G = G_1 \cdot G_2$ with $G_1 \in G(\ell,m,n)$. For a vertex $v \in G$, let $e_{G_1}(v)$ denote the set of edges incident to $v$ in $G_1$. Then $R$ has an $m$-factor consisting of the edges from $G_1$ (in both copies of $G$) and $m - \#e_{G_1}(v)$ of the edges between the copies of $v$.
\end{proof}

The other direction of this lemma is false, but offers inspiration for our approach. First, we establish some terminology. For $G \in G(\ell,m,n)$, let $B_G$ be the set of cut-edges in $G$. We refer to the connected components of $G-B_G$ as the \defn{blocks} of $G$. We define the \defn{deficit} of a vertex by $\operatorname{def}(v):= m - \deg(v)$. The deficit of a block $B$ is
\[
    \operatorname{def}(B) = \sum_{v \in B} \operatorname{def}(v).
\]
The \defn{total deficit} of a connected component is the sum of the deficits of all its blocks. We will denote by $C_d(G)$ the set of components of $G$ with total deficit $d$. We will say a block is \defn{full} if it has deficit 0 and is \defn{almost full} if it has deficit 1. 

Let $T_G$ denote the \defn{bridge forest} of $G$ with vertices as the blocks of $G$ and edges as the edges in $B_G$. A block is a \defn{leaf-block} if it has degree 1 in $T_G$ (a block with degree 0 in $T_G$ is not considered a leaf-block). We will denote by $FL(G)$ and $AFL(G)$ the sets of full leaf-blocks and almost full leaf-blocks of $T_G$, respectively.

\begin{example}\label{ex:blocks}
Let $m = 3$, $\ell = 4$ and $n = 8$, and let $G \in G(\ell,m,n)$ have the ten edges
\[
  \underbrace{(1,2),\; (1,2),\; (1,3),\; (2,3)}_{\text{block } B_1},\quad
  (3,4),\quad
  \underbrace{(4,5),\; (4,5)}_{\text{block } B_2},\quad
  (5,6),\quad
  \underbrace{(6,7),\; (6,7)}_{\text{block } B_3},
\]
with the vertex $8$ isolated (\Cref{fig:blocks}). Every vertex degree is at most $3$ and $\ell m = 12 \geq 10$, so indeed $G \in G(\ell,m,n)$. 

The cut-edges are
\[
  B_G = \{(3,4),\; (5,6)\}
\]
and the blocks of $G$ are
\[
  B_1 = \{1,2,3\}, \qquad B_2 = \{4,5\}, \qquad B_3 = \{6,7\}, \qquad B_4 = \{8\}.
\]
Since $\operatorname{def}(v) = 3 - \deg(v)$, the only vertices of positive deficit are $7$ and $8$, with $\operatorname{def}(7) = 1$ and $\operatorname{def}(8) = 3$. Hence
\[
  \operatorname{def}(B_1) = \operatorname{def}(B_2) = 0, \qquad
  \operatorname{def}(B_3) = 1, \qquad
  \operatorname{def}(B_4) = 3,
\]
so $B_1$ and $B_2$ are full and $B_3$ is almost full. The graph $G$ has two connected components: the one containing $1, \dots, 7$ has total deficit $1$, while $\{8\}$ has total deficit $3$. Thus $C_1(G)$ consists of the large component, $C_3(G) = \bigl\{\{8\}\bigr\}$, and $C_d(G) = \varnothing$ for all other $d$.

The bridge forest $T_G$ has vertices $B_1, B_2, B_3, B_4$, with $(3,4)$ joining $B_1$ to $B_2$ and $(5,6)$ joining $B_2$ to $B_3$. So $T_G$ is the disjoint union of the path $B_1 - B_2 - B_3$ and the isolated vertex $B_4$. Since $B_1$ and $B_3$ have degree $1$ in $T_G$ while $B_2$ has degree $2$ and $B_4$
has degree $0$,
\[
  FL(G) = \{B_1\}, \qquad AFL(G) = \{B_3\}.
\]
\end{example}

\begin{figure}[ht]
\centering
\begin{tikzpicture}[
    vtx/.style={circle, draw, thick, inner sep=0pt, minimum size=6.5mm, font=\small},
    def/.style={vtx, fill=black!12},
    blk/.style={circle, draw, inner sep=0pt, minimum size=8mm, font=\small},
    cut/.style={thick, dashed},
    box/.style={rounded corners, draw=black!45, dotted, thick, inner sep=4mm},
    elbl/.style={font=\scriptsize, inner sep=1.5pt}
  ]

  \node[vtx] (v1) at (0,0.8)   {$1$};
  \node[vtx] (v2) at (0,-0.8)  {$2$};
  \node[vtx] (v3) at (1.45,0)  {$3$};
  \node[vtx] (v4) at (3.35,0)  {$4$};
  \node[vtx] (v5) at (4.75,0)  {$5$};
  \node[vtx] (v6) at (6.65,0)  {$6$};
  \node[vtx] (v7) at (8.05,0)  {$7$};
  \node[vtx] (v8) at (9.95,0)  {$8$};

  \draw[thick] (v1) to[bend left=22]  (v2);
  \draw[thick] (v1) to[bend right=22] (v2);
  \draw[thick] (v1) -- (v3);
  \draw[thick] (v2) -- (v3);

  \draw[cut] (v3) -- node[elbl, above] {} (v4);

  \draw[thick] (v4) to[bend left=22]  (v5);
  \draw[thick] (v4) to[bend right=22] (v5);

  \draw[cut] (v5) -- node[elbl, above] {} (v6);

  \draw[thick] (v6) to[bend left=22]  (v7);
  \draw[thick] (v6) to[bend right=22] (v7);

  \begin{scope}[on background layer]
    \node[box, fit=(v1)(v2)(v3)] {};
    \node[box, fit=(v4)(v5)]     {};
    \node[box, fit=(v6)(v7)]     {};
    \node[box, fit=(v8)]         {};
  \end{scope}

  \node[font=\scriptsize] at (0.72,-1.85) {$B_1$: full};
  \node[font=\scriptsize] at (4.05,-1.05) {$B_2$: full};
  \node[font=\scriptsize] at (7.35,-1.05) {$B_3$: almost full};
  \node[font=\scriptsize] at (9.95,-1.05) {$B_4$};

  \node[font=\itshape] at (-1.15,0) {$G$};

  \begin{scope}[yshift=-3.1cm]
    \node[blk] (b1) at (0.72,0) {$B_1$};
    \node[blk] (b2) at (4.05,0) {$B_2$};
    \node[blk] (b3) at (7.35,0) {$B_3$};
    \node[blk] (b4) at (9.95,0) {$B_4$};

    \draw[cut] (b1) -- node[elbl, above] {$(3,4)$} (b2);
    \draw[cut] (b2) -- node[elbl, above] {$(5,6)$} (b3);

    \node[font=\scriptsize] at (0.72,-0.8) {leaf};
    \node[font=\scriptsize] at (4.05,-0.8) {};
    \node[font=\scriptsize] at (7.35,-0.8) {leaf};
    \node[font=\scriptsize] at (9.95,-0.8) {};

    \node[font=\itshape] at (-1.15,0) {$T_G$};
  \end{scope}

\end{tikzpicture}
\caption{The graph $G$ of \Cref{ex:blocks} and its bridge forest $T_G$. The two cut-edges are dashed and the four blocks are enclosed by dotted boxes. The blocks $B_1$ and $B_3$ are the leaves of $T_G$, so $FL(G) = \{B_1\}$ and $AFL(G) = \{B_3\}$. The block $B_2$ is full but has degree $2$, and $B_4$ has degree $0$, so neither is a leaf-block.}
\label{fig:blocks}
\end{figure}

\begin{theorem}
\label{thm:2f}
    Suppose that $R$ is an $m$-regular loopless multigraph. Then $R$ has a 2-factor if and only if one of the following conditions is true:
    \begin{enumerate}
        \item $m$ is even \cite{peterson}, or
        \item $m\geq 3$ is odd and every 2-factor-free block of $R$ is incident to fewer than $m$ cut-edges \cite{minimal}.
    \end{enumerate}
\end{theorem}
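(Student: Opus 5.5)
Both directions of \Cref{thm:2f} are classical, so the plan is to give a self-contained sketch for part (1) and to reduce part (2) to the cited characterization of \cite{minimal}, checking how its hypotheses match our notion of blocks (components of $R - B_R$).

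For $m$ even, I would follow Petersen's argument \cite{peterson}. Every vertex of $R$ has even degree, so each connected component has an Eulerian circuit; this works in the multigraph setting, and looplessness ensures each circuit traversal enters and leaves a vertex along genuine edges. Orient every edge along the circuit, so that each vertex has in-degree and out-degree $m/2$. Form the bipartite multigraph $R'$ with vertex classes $V^+$ and $V^-$ (two copies of $V(R)$), placing an edge $u^+v^-$ for each arc $u\to v$. Then $R'$ is $(m/2)$-regular and bipartite, so by K\"onig's theorem (Hall's theorem) it has a perfect matching. Pulling the matching back to $R$, each vertex gets exactly one outgoing and one incoming chosen arc, so the chosen edges form a $2$-regular spanning subgraph. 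Looplessness is essential here: it forces $u\neq v$ on every arc. A pair of parallel edges gives a $2$-cycle, which counts as a cycle in a multigraph, so this is acceptable. This yields a $2$-factor, and in fact a full decomposition into $m/2$ of them.

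For $m$ odd, the ``only if'' direction is the delicate one. I would first note that a $2$-factor avoids every cut-edge: a cycle cannot use a bridge, so a $2$-factor of $R$ is a union of $2$-factors of the blocks. Second, a block $B$ incident to $k$ cut-edges has $k$ vertex-degree deficits inside $B$. A parity or Tutte-type count (the $f$-factor theorem with $f\equiv 2$) then shows that when $B$ has no $2$-factor on its own and $k\geq m$, no $2$-factor of $R$ can exist. Conversely, when every $2$-factor-free block meets fewer than $m$ cut-edges, one verifies Tutte's $f$-factor condition for $f\equiv 2$. The standard route is Hanson--Loten--Toft style: show that any Tutte obstruction $(S,T)$ localizes to a single block, and then count the odd components against the bridges leaving that block. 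I expect this converse, the verification of the $f$-factor criterion, to be the main obstacle. Since it is precisely the content of \cite{minimal}, I would cite it rather than reprove it, after checking that the notion of ``$2$-factor-free block'' there coincides with the one used here for loopless multigraphs.
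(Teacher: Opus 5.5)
Your proposal matches the paper, which gives no argument of its own and simply cites Petersen \cite{peterson} for even $m$ and \cite{minimal} for odd $m$; your Euler-tour and bipartite-matching sketch is a correct rendering of Petersen's proof (for $m \geq 2$). One correction: for odd $m$ the ``only if'' direction is not delicate and needs no Tutte-type count, because your own observation that a $2$-factor avoids every cut-edge already shows that a graph with a $2$-factor has no $2$-factor-free blocks, so the condition holds vacuously; the real content, which you rightly defer to \cite{minimal}, is the ``if'' direction, namely that a block meeting fewer than $m$ cut-edges always has a $2$-factor.
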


\begin{corollary}
\label{cor:2fblock}
    If $R$ is an $m$-regular loopless multigraph with $m \geq 3$ odd, and each connected component of $T_R$ has fewer than $m$ leaf-blocks, then $R$ has a 2-factor.
\end{corollary}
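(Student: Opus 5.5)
The plan is to deduce \Cref{cor:2fblock} from part (2) of \Cref{thm:2f}. Since $m \geq 3$ is odd, it suffices to show that every 2-factor-free block $B$ of $R$ is incident to fewer than $m$ cut-edges. We will in fact prove this for every block of $R$, not only the 2-factor-free ones.

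Fix a block $B$, and let $T$ be the connected component of the bridge forest $T_R$ containing $B$. The number of cut-edges incident to $B$ is exactly $\deg_{T}(B)$. This uses that $T_R$ has no multiple edges: two cut-edges between the same pair of blocks would lie on a common cycle and so would not be bridges. The component $T$ is a tree, so we can use the standard leaf count
\[
    \#\{\text{leaves of } T\} \;=\; 2 + \sum_{v \in T,\ \deg v \geq 3} (\deg v - 2) \;\geq\; \deg_T(B)
\]
whenever $T$ has at least two vertices. More simply, deleting $B$ from $T$ splits it into $\deg_T(B)$ subtrees. Each subtree either is a single vertex, which is then a leaf of $T$, or contains at least two leaves of itself, at most one of which is adjacent to $B$. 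In either case each subtree contributes at least one leaf of $T$ different from $B$. Hence $\deg_T(B) \leq \#\{\text{leaf-blocks of } T\} < m$ by hypothesis. If $T$ is a single vertex, then $\deg_T(B) = 0 < m$.

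Thus every block of $R$ is incident to fewer than $m$ cut-edges, and in particular every 2-factor-free block is. Part (2) of \Cref{thm:2f} then gives that $R$ has a 2-factor. This also fits the convention that leaf-blocks are blocks of degree exactly 1 in $T_R$, since the leaves counted above are precisely those.

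The only real point to check is the bijection between cut-edges at $B$ and tree edges at $B$, i.e.\ that $T_R$ is a simple forest. This is standard: contracting 2-edge-connected components yields a forest whose edges are exactly the bridges. No step should be a serious obstacle.
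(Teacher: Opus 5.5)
Your proposal is correct and follows the paper's argument: the paper proves the contrapositive in one line (a block incident to at least $m$ cut-edges forces at least $m$ leaf-blocks in its component of $T_R$) and then applies part (2) of \Cref{thm:2f}. Your subtree-counting argument, together with the check that $T_R$ is a simple forest, just supplies the tree fact that the paper leaves implicit.
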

\begin{proof}
    If there is a block incident to at least $m$ cut-edges, then $T_R$ must have at least $m$ leaf-blocks in the same connected component.
\end{proof}

We will use \Cref{thm:2f} by taking an arbitrary element of $G(\ell,m,n)$, embedding it in an $m$-regular graph with a 2-factor, then restricting the 2-factor back to the original graph to show that it is in $G(\ell,2,n) \cdot G(\ell,m-2,n)$. But we need to account for the $\ell$ parameter when we restrict back to the original graph by controlling how many edges of the 2-factor are contained in the original graph.

We will say $R$ is an \defn{external regularization} of $G$ if it can be constructed using the following steps:
\begin{enumerate}
    \item If $G$ is regular, then the only external regularization of $G$ is $R = G$ itself. Otherwise, define $k := \lfloor 2d/m \rfloor \leq 2\ell$, where $d$ is the number of edges in $G$. Then add $n - k - 1$ vertices, labeled $v_1,v_2,\dots,v_{n-k-1}$. Connect each of these vertices with $m$ edges to vertices in $G$ such that the maximum degree in $G$ is still $\leq m$ (which is possible by choice of $k$). After this step, the sum of the vertex degrees in $G$ is 
    \[
        mn - (m(k+1) -2d).
    \]

    \item Add an additional vertex $v_{n-k}$ and connect it with vertices in $G$ using $m(k+1) -2d \leq m$ edges, again keeping vertex degrees $\leq m$. After this step, every vertex in $G$ has degree $m$. We will refer to the $v_1,\dots,v_{n-k}$ as the \defn{external vertices}.

    \item If $2d/m$ is an integer, we now have an $m$-regular graph. Otherwise, if $mk$ is even, we add edges and vertices as shown on the left side of \Cref{fig:cases}. If $mk$ is odd, then $m$ must also be odd. In this case we add edges and vertices as shown on the right side of \Cref{fig:cases}.

    \begin{figure}[ht]
      \centering
      \begin{tikzpicture}[
          vtx/.style  = {circle, draw, minimum size=6pt, inner sep=0pt},
          gbox/.style = {draw, ellipse, fill=black!7, minimum width=12mm, minimum height=8.5mm, inner sep=1pt},
          lbl/.style  = {font=\footnotesize},
          nm/.style   = {font=\footnotesize, inner sep=1.5pt},
          sub/.style  = {font=\small},
        ]
        \begin{scope}[shift={(0,0)}]
          \node[gbox] (G) at (0,-1.4)      {$G$};
          \node[vtx, label={[nm]left:$v_{n-k}$}] (v) at (0,-3.02) {};
          \node[vtx]  (l) at (-1.2,-5.1)   {};
          \node[vtx]  (r) at (1.2,-5.1)    {};
          \draw (G) -- (v) node[lbl, right, midway]  {$m(k+1)-2d$};
          \draw (v) -- (l) node[lbl, left, midway]   {$d-\tfrac{mk}{2}$};
          \draw (v) -- (r) node[lbl, right, midway]  {$d-\tfrac{mk}{2}$};
          \draw (l) -- (r) node[lbl, below, midway]  {$\frac{m(k+2)}{2}-d$};
          \node[sub] at (0,-6.05) {(A)\; $mk$ even};
        \end{scope}
        \begin{scope}[shift={(6.4,0)}]
          \node[gbox] (G2) at (0,0.3)      {$G$};
          \node[vtx, label={[nm]left:$v_{n-k}$}] (v2) at (0,-1.4) {};
          \node[vtx]  (w2) at (0,-3.02)    {};
          \node[vtx]  (l2) at (-1.2,-5.1)  {};
          \node[vtx]  (r2) at (1.2,-5.1)   {};
          \draw (G2) -- (v2) node[lbl, right, midway]  {$m(k{+}1)-2d$};
          \draw (v2) -- (w2) node[lbl, right, midway]  {$2d-mk$};
          \draw (w2) -- (l2) node[lbl, left, midway]   {$\tfrac{m(k+1)}{2}-d$};
          \draw (w2) -- (r2) node[lbl, right, midway]  {$\tfrac{m(k+1)}{2}-d$};
          \draw (l2) -- (r2) node[lbl, below, midway]  {$\tfrac{m(1-k)}{2}+d$};
          \node[sub] at (0,-6.05) {(B)\; $mk$ odd};
        \end{scope}
      \end{tikzpicture}
      \caption{Step (3) of the external regularization, in the case $2d/m \notin \mathbb{Z}$. The new vertices are joined only to $v_{n-k}$ and to each other. All vertex degrees come out to exactly $m$, so the result is $m$-regular.}
      \label{fig:cases}
    \end{figure}
\end{enumerate}

Since none of the added vertices $v_1,\dots,v_{n-k}$ have edges between them, if $R$ has a 2-factor, then we know how many of its edges are in the original graph. This gives us the following result.

\begin{theorem}
\label{thm:extreg}
    If $G \in G(\ell,m,n)$ has an external regularization with a 2-factor, then
    \[
        G \in G(\ell,2,n) \cdot G(\ell,m-2,n).
    \]
\end{theorem}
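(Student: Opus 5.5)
The plan is to restrict the $2$-factor of the external regularization back to $G$ and then check the three defining constraints (degree, edge count, and the $\ell$-parameter) for both pieces. Let $R$ be an external regularization of $G$ with a $2$-factor $F$, and let $d$ be the number of edges of $G$. If $G$ is regular then $R=G$, and we are in the special case below. Set $G_1 := F \cap G$, the edges of $F$ with both endpoints in the original vertex set $[n]$, and $G_2 := G - G_1$, so that $G = G_1 \cdot G_2$. It remains to show $G_1 \in G(\ell,2,n)$ and $G_2 \in G(\ell,m-2,n)$.

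\emph{Degree bounds.} Each original vertex $v$ has degree $m$ in $R$ and degree $2$ in $F$. Hence $\deg_{G_1}(v) \leq 2$. The edges of $G_2$ at $v$ are edges of $R$ not in $F$, and there are exactly $m-2$ of those, so $\deg_{G_2}(v)\le m-2$.

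\emph{Counting the edges of $F$ inside $G$.} This is the key step, and it uses that the external vertices form an independent set. Summing $F$-degrees over the $n$ original vertices gives
\[
  2n = 2e(G_1) + \#\{\text{edges of } F \text{ between } [n] \text{ and external vertices}\}.
\]
Each of $v_1,\dots,v_{n-k-1}$ has all of its neighbours in $[n]$, so it contributes exactly $2$ edges of $F$. The vertex $v_{n-k}$ contributes between $0$ and $2$. Therefore $k \le e(G_1) \le k+1$.

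\emph{Edge count for $G_1$.} We need $e(G_1)\le 2\ell$, and there are two cases.
\begin{itemize}
  \item If $2d/m\in\ZZ$ (this includes the regular case, where $k=2d/m=n$), then there is no gadget. All edges at $v_{n-k}$ go to $[n]$, so it contributes exactly $2$ and $e(G_1)=k=2d/m\le 2\ell$, using $d\le \ell m$.
  \item Otherwise $k<2d/m\le 2\ell$, so $k+1\le 2\ell$.
\end{itemize}
Either way $G_1\in G(\ell,2,n)$.

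\emph{Edge count for $G_2$.} We have $e(G_2)=d-e(G_1)\le d-k$. Since $k>2d/m-1$,
\[
  d-k< d\Bigl(1-\tfrac{2}{m}\Bigr)+1 \le \ell(m-2)+1,
\]
and integrality gives $e(G_2)\le \ell(m-2)$. Hence $G_2\in G(\ell,m-2,n)$.

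The main obstacle I expect is the bookkeeping around $v_{n-k}$ and the gadget of \Cref{fig:cases}. I need to verify that the gadget vertices are adjacent only to $v_{n-k}$ and to each other, so that they never contribute $F$-edges inside $[n]$. I also need to confirm that the boundary case $k=2\ell$ forces $2d/m\in\ZZ$, since that is exactly the case where the bound $e(G_1)\le k+1$ would otherwise be too weak. The degenerate case $m=2$, where $G_2$ is empty, and the case where $G$ itself is regular should be checked separately but are immediate.
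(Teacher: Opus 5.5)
Your proof is correct and follows the same route as the paper: restrict the 2-factor to $G$, use that each of $v_1,\dots,v_{n-k-1}$ sends exactly two $F$-edges into $[n]$ to get $e(F|_G)\in\{k,k+1\}$ (exactly $k$ when $2d/m\in\mathbb{Z}$), and bound the complement by $d-k\le(m-2)\ell$. Your degree-sum identity just spells out the ``similar analysis'' the paper leaves implicit, and the two worries you flag at the end are already settled — the boundary case $k=2\ell$ by your own case split, and the gadget vertices by the construction itself, which joins them only to $v_{n-k}$ and each other.
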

\begin{proof}
    Let $F$ be the 2-factor. There are two cases:
    \begin{enumerate}
        \item[Case 1.] If $2d/m$ is an integer, then $v_{n-k}$ connects only to vertices in $G$. So the number of edges of $F$ not in $G$ is exactly $2n - 2k$. Hence the restriction $F|_G$ of $F$ to $G$ has exactly $k \leq 2\ell$ edges, so $F|_G \in G(\ell,2,n)$. Its edge-complement $F|_G^c$ has vertex degrees $\leq m-2$ and has exactly $(m-2)d/m \leq (m-2)\ell$ edges, so $F|_G^c \in G(\ell, m-2,n)$.

        \item[Case 2.] If $2d/m$ is not an integer, then by a similar analysis, the number of edges in the restriction $F|_G$ is either $k$ or $k + 1$. Also, $k \leq 2\ell - 1$, so $F|_G \in G(\ell,2,n)$. Now,
        \[
            d-(k+1) \leq d-k = \lceil (m-2)d/m\rceil \leq (m-2)\ell.
        \]
        Hence $F|_G^c \in G(\ell, m-2,n)$.
    \end{enumerate}

    Thus $G = F|_G \cdot F|_G^c$ with $F|_G \in G(\ell, 2,n)$ and $F|_G^c \in G(\ell, m-2,n)$ as desired.
\end{proof}

\begin{corollary}
\label{cor:even}
    If $m$ is even, then
    \[
        G(\ell,m,n) = G(\ell,2,n) \cdot G(\ell,m-2,n).
    \]
\end{corollary}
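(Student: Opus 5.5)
The plan is to combine \Cref{thm:extreg} with part (1) of \Cref{thm:2f} (Petersen's theorem). The containment $G(\ell,2,n)\cdot G(\ell,m-2,n) \subseteq G(\ell,m,n)$ is immediate from the definitions. A product $G_1\cdot G_2$ has vertex degrees at most $2+(m-2)=m$. Its edge count is at most $2\ell+(m-2)\ell=\ell m$. It remains loopless because both factors are. So the work is in the reverse containment.

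For the reverse containment, fix $G \in G(\ell,m,n)$ with $m$ even. If $m=2$, there is nothing to prove: write $G = G\cdot \emptyset$, where the empty graph lies in $G(\ell,0,n)$. So assume $m\ge 4$, and take any external regularization $R$ of $G$ as constructed before \Cref{thm:extreg}.

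First I would check that the construction is well defined when $m$ is even.

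\begin{itemize}
\item In step (1), the number of added vertices is $n-k-1$, which must be nonnegative. This needs $k=\lfloor 2d/m\rfloor \le n-1$. That holds when $d\le \ell m$ and $\ell\le \lfloor n/2\rfloor$, except possibly when $2d/m = n$ exactly. In that case all degrees equal $m$, so $G$ is regular and $R=G$.
\item The edges from the external vertices can be distributed greedily while keeping all degrees in $G$ at most $m$. The total deficit is $mn-2d \ge m(n-k-1) + (m(k+1)-2d)$, with $0 \le m(k+1)-2d \le m$.
\item In step (3), since $m$ is even, $mk$ is even, so only case (A) of \Cref{fig:cases} arises. The multiplicities there are $d-mk/2$ and $m(k+2)/2-d$, which are nonnegative integers because $mk/2 \le d < m(k+1)/2$. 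One checks directly that the two new vertices and $v_{n-k}$ get degree exactly $m$.
\end{itemize}

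Thus $R$ is an $m$-regular loopless multigraph. Loops never arise, since every added edge joins distinct vertices.

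Since $m$ is even, \Cref{thm:2f}(1) gives a 2-factor of $R$. (Alternatively, take an Euler tour of each component and split it alternately, per Petersen's argument.) \Cref{thm:extreg} then yields $G\in G(\ell,2,n)\cdot G(\ell,m-2,n)$, completing the equality.

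The main obstacle is the well-definedness check in the regularization: confirming $n-k-1\ge 0$ and that the degree budget in steps (1) and (2) works out. The delicate boundary case is $2\ell = n$ with $d=\ell m$. It is handled by noting that $G$ is then already $m$-regular, since the total degree $2d=nm$ forces every degree to equal $m$.

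If one wants the full statement $G(\ell,m,n)=G(\ell,m',n)\cdot G(\ell,m-m',n)$ for even $m'$, iterate this corollary by induction on $m$. That iteration is not needed for the statement as given.
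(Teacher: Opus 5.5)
Your proposal is correct and follows the paper's proof: take any external regularization $R$ of $G$, use \Cref{thm:2f}(1) (Petersen) to get a 2-factor of $R$, and conclude via \Cref{thm:extreg}. Your additional checks are details the paper leaves implicit: that the regularization is well defined for even $m$ (only case (A) of \Cref{fig:cases} arises, and $k\le n-1$ unless $G$ is already $m$-regular), the $m=2$ case, and the trivial reverse containment.
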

\begin{proof}
    Take any external regularization $R$ of $G \in G(\ell,m,n)$. By \Cref{thm:2f}, $R$ has a 2-factor. Hence $G \in G(\ell,2,n) \cdot G(\ell,m-2,n)$ by \Cref{thm:extreg}.
\end{proof}

The case when $m$ is odd is more complicated and is handled in \Cref{app:modd}.

\subsection{Maximal tableaux}

For any possible first row of an element of $G(\ell,m,n)$ there is a unique maximal non-decreasing second row obtained by choosing $b_d$ as large as possible, then $b_{d-1}$ as large as possible, and so on. We will denote by $S_{\max}(\ell,m,n)$ the set of graphs with maximal non-decreasing second row. Because the second row is non-decreasing, maximal tableaux are semistandard, i.e., elements of $S(\ell,m,n)$.

\begin{lemma}
\label{lem:basis}
    $S_{\max}(\ell,m,n)$ is a basis of $V_{m\Delta_{\ell,n}}$.
\end{lemma}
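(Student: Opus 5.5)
The plan is a counting argument combined with a triangularity (leading term) argument. By \Cref{lem:span}, $\dim V_{m\Delta_{\ell,n}} = c(\ell m,m,n)$, and this number equals the number of realizable first rows of elements of $G(\ell,m,n)$. Each realizable first row determines exactly one element of $S_{\max}(\ell,m,n)$, since the greedy choice of $b_d, b_{d-1},\dots$ is unique. Hence $|S_{\max}(\ell,m,n)| = \dim V_{m\Delta_{\ell,n}}$, and it suffices to show that $\{\delta_T : T \in S_{\max}(\ell,m,n)\}$ is linearly independent.

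To prove independence, I will pick a monomial order and locate a distinguished monomial of $\delta_T$ that depends only on the first row. Expanding $\delta_T=\prod_{i}(x_{a_i}-x_{b_i})$, choosing $x_{a_i}$ from every factor gives the monomial $x^{\alpha(T)}$, where $\alpha(T)_v$ is the number of times $v$ appears in the first row. Under lexicographic order with $x_1>x_2>\cdots>x_n$ this is the leading monomial of $\delta_T$, because $a_i<b_i$ in every factor. Its coefficient is $\pm1$, more precisely $+1$ after the sign convention. Distinct elements of $S_{\max}$ have distinct first rows, so they have distinct multisets of first-row entries, and therefore distinct leading monomials $x^{\alpha(T)}$. (The first row is non-decreasing, so it is determined by its content $\alpha$.) Polynomials with pairwise distinct leading monomials are linearly independent. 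This gives independence, and together with the dimension count it proves the lemma.

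The main obstacle is making sure the counting step really matches. I need \Cref{lem:span}'s ``realizable first rows'' to be exactly the set of first rows for which a non-decreasing second row exists, so that the greedy maximal second row is always defined. The paper asserts that the greedy maximal second row lies in $S(\ell,m,n)$ and, in particular, that it keeps all degrees $\le m$. If there is a gap, I would argue that any realizable first row admits some valid graph $G$, and that sorting its second row yields a semistandard one. This works because the nonnesting uncrossing $(a,d),(b,c)\mapsto(a,c),(b,d)$ preserves the degrees and the edge count and keeps endpoints distinct. The greedy maximization then stays within the degree constraints. The leading-monomial step itself is routine. I would double check one point only: a first row of $G(\ell,m,n)$ must have every entry smaller than the corresponding second-row entry, so $x_{a_i}$ is indeed the larger variable in each factor.
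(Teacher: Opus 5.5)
Your proposal is correct and follows the paper's proof: linear independence comes from the distinct lexicographic leading monomials $\prod_i x_{a_i}$, which are determined by the first rows, and the count $|S_{\max}(\ell,m,n)| = \dim V_{m\Delta_{\ell,n}}$ comes from \Cref{lem:span}. Your extra check fills in a step the paper only asserts: degree-preserving uncrossing shows every realizable first row admits a semistandard second row, so the maximal second row is well defined.
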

\begin{proof}
    The elements of $S_{\max}(\ell,m,n)$ are linearly independent because they have distinct leading monomials in lexicographic order. By \Cref{lem:span}, $S_{\max}(\ell,m,n)$ has size $\dim V_{m\Delta_{\ell,n}}$, so is a basis.
\end{proof}

Denote by $S_{\reg}(m,n)$ the set of $m$-regular graphs in $S(\lceil n/2 \rceil,m,n)$. The tableaux are semistandard since they are in $S(\lceil n/2 \rceil,m,n)$, and $m$-regularity determines the second row from the first, so the second row is certainly maximal. Hence $S_{\reg}(m,n) \subseteq S_{\max}(\lceil n/2 \rceil,m,n)$.

\begin{lemma}
\label{lem:maxreg}
    Every element of $S_{\max}(\ell,m,n)$ has an external regularization in $S_{\reg} (m,n')$ for some $n' \geq n$.
\end{lemma}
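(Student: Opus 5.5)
The plan is to build the external regularization of \Cref{lem:maxreg} explicitly, and to choose the labels of the new vertices so that the result is a two-row semistandard tableau. We are free to relabel $R$ by any order on its $n'$ vertices that restricts to the usual order on $[n]$: the new vertices only have to be interleaved among $1,\dots,n$ or placed after them. Membership in $S(\lceil n'/2\rceil,m,n')$ then needs only two things. The edge count holds automatically, since an $m$-regular graph on $n'$ vertices has $mn'/2\le m\lceil n'/2\rceil$ edges. So the real content is that $R$ is nonnesting (equivalently, both tableau rows are non-decreasing) and $m$-regular.

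First, the regularity bookkeeping from steps (1)--(3) of the definition of external regularization. Write $G\in S_{\max}(\ell,m,n)$ with $d$ edges, and regard each vertex $a$ of $G$ as carrying $\operatorname{def}(a)$ ``stubs''. List all stubs in increasing order of their vertex. Assign the first $m$ stubs to $v_1$, the next $m$ to $v_2$, and so on, with the final $m(k+1)-2d$ stubs going to $v_{n-k}$. This is a legal choice in steps (1) and (2), because each vertex of $G$ then has degree exactly $m$. The gadget vertices of \Cref{fig:cases} are given the largest labels, in the order $v_{n-k}<w<l<r$ (or $v_{n-k}<l<r$ in case (A)). Every gadget edge is then among the top vertices, forms a small interval pattern, and one checks directly that no nesting occurs among those edges.

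The main step is placing $v_1,\dots,v_{n-k}$ in the order. Because the stubs were assigned monotonically, the new edges $(a,v_i)$ have both endpoints non-decreasing in $i$. I would insert each $v_i$ immediately after the largest vertex $c\in[n]$ such that some edge of $G$ has larger endpoint $c$ and smaller endpoint at most the vertex of the last stub of $v_i$. The claim to prove is that, after this insertion, the merged list of old edges and new edges is nonnesting. This is exactly where maximality of the second row must enter. Suppose a new edge $(a,v_i)$ nests an old edge $(b,c)$, i.e.\ $a<b<c<v_i$. Then $a$ has positive deficit, and we can try replacing the edge $(b,c)$ by one whose larger endpoint is bigger. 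Such a replacement exchange should produce a lexicographically larger non-decreasing second row with the same first row, which stays within the degree bound $m$ because $\operatorname{def}(a)>0$ frees capacity. That contradicts the greedy choice of $b_d,b_{d-1},\dots$. Conversely, nesting of old edges by new ones is ruled out by the choice of insertion point.

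I expect this exchange argument to be the main obstacle. The greedy definition of the maximal second row is stated position by position, so the modification of the tableau must be shown to remain semistandard and to respect the degree and edge bounds defining $G(\ell,m,n)$. In particular, the edge count $\le \ell m$ is unchanged because the first row is fixed. I would first try the simplest modification: swap the second-row entries so that the stub vertex is absorbed into $G$. If that fails, I would instead characterize $S_{\max}$ directly. I expect that characterization to say that the vertices of positive deficit occupy a final segment in the appropriate sense, after which the insertion positions become evident.
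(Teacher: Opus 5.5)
There is a genuine gap. Inserting the external vertices above their neighbours in $G$ is the wrong direction, and the exchange argument you rely on to rule out nestings does not exist.

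Take $m=3$, $n=4$ and $G=\{(1,4),(2,4)\}$. Its first row $(1,2)$ has maximal second row $(4,4)$, so $G\in S_{\max}(1,3,4)$. The deficits of vertices $1,2,3,4$ are $2,2,3,1$, so your stub list is $1,1,2,2,3,3,3,4$ and $v_1$ receives the stubs $1,1,2$. Your rule inserts $v_1$ after $4$, and then $(1,v_1)$ nests $(2,4)$. No exchange can enlarge the larger endpoint $4=n$, so maximality yields no contradiction. In fact no position of $v_1$ above vertex $1$ avoids a nesting:
\begin{itemize}
\item between $1$ and $2$, the edge $(v_1,2)$ sits inside $(1,4)$;
\item between $2$ and $4$, the edge $(2,v_1)$ sits inside $(1,4)$;
\item after $4$, the edge $(1,v_1)$ contains $(2,4)$.
\end{itemize}
The paper's running example, with rows $1\,2\,3\,3$ and $3\,4\,4\,4$, breaks your construction in the same way. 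Your rule places $v_1$ (stubs $1,1,2$) after $4$, and $(1,v_1)$ nests $(3,4)$. Placing $v_{n-k}$ (stub $2$) among the top labels makes $(2,v_{n-k})$ nest $(3,4)$.

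The missing idea is what maximality actually says, and it is the opposite of your guessed ``final segment'' characterization. Give each vertex $v$ a capacity $m-\#\{i : a_i=v\}$. Any valid non-decreasing second row $b'$ is dominated termwise by the sorted multiset of the $d$ largest capacity slots. So the greedy maximal row is exactly those $d$ largest slots, and every vertex $v>b_1$ is saturated. Hence all vertices of positive deficit are $\le b_1$.

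This is what lets the paper extend the tableau to the \emph{left}:
\begin{itemize}
\item prepend columns whose second-row entries are $\operatorname{def}(v)$ copies of each such $v$; since all are $\le b_1$, the second row stays non-decreasing;
\item fill the first-row entries of these columns with new labels smaller than $1$, in non-decreasing order, via a monotone stub assignment in which $v_{n-k}$ gets the smallest such label and the smallest stubs;
\item cap on the far left with the gadget, which touches only $v_{n-k}$, and re-index.
\end{itemize}
Column strictness is then automatic, and the edge bound follows from regularity, as you noted.
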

\begin{proof}
    Extend the tableau $T \in S_{\max}(\ell,m,n)$ to the left by adding $\operatorname{def}(v)$ many copies of $v$ to the second row, then completing the first row with nonpositive integers and capping the tableau on the left according to \Cref{fig:cases}. Consider the example in \Cref{fig:tab}.
\end{proof}

\begin{figure}[h]
\definecolor{f8gad}{RGB}{250,216,168}   
\definecolor{f8stp2}{RGB}{198,227,190}  
\definecolor{f8stp1}{RGB}{198,216,240}  
\tikzset{
  f8cell/.style = {rectangle, draw, minimum width=0.5cm, minimum height=0.5cm,
                   inner sep=1pt, font=\scriptsize},
  f8vtx/.style  = {circle, draw, thick, inner sep=0pt, minimum size=6mm,
                   font=\small},
  f8e/.style    = {thick},
  f8cut/.style  = {thick, densely dashed},
  f8box/.style  = {draw=black!55, densely dotted, thick, rounded corners=4pt},
  f8arr/.style  = {-{Stealth[length=5pt]}, thick, draw=black!55},
  f8lab/.style  = {font=\footnotesize\itshape, text=black!70},
}
\centering
\begin{tikzpicture}
 
\node[f8lab, anchor=south] at (1.54,0.32) {$T$};
\foreach \a/\b [count=\i from 0] in {1/3, 2/4, 3/4, 3/4}{
  \node[f8cell, fill=white] at (0.79+\i*0.5, 0.00) {$\a$};
  \node[f8cell, fill=white] at (0.79+\i*0.5,-0.50) {$\b$};
}
 
\draw[f8arr] (2.94,-0.25) -- (3.64,-0.25);
 
\node[f8lab, anchor=south] at (5.325,0.32) {$G$};
\node[f8vtx, fill=white] (g1) at (4.05,-0.25) {$1$};
\node[f8vtx, fill=white] (g3) at (4.90,-0.25) {$3$};
\node[f8vtx, fill=white] (g4) at (5.75,-0.25) {$4$};
\node[f8vtx, fill=white] (g2) at (6.60,-0.25) {$2$};
\draw[f8e] (g1) -- (g3);
\draw[f8e] (g3) to[bend left=30]  (g4);
\draw[f8e] (g3) to[bend right=30] (g4);
\draw[f8e] (g4) -- (g2);
 
\node[f8lab, anchor=south] at (3.80,-1.34)
      {completion of $T$ by $0$ and negative entries};
\foreach \a/\b/\c [count=\i from 0] in {%
    -3/-2/f8gad, -3/-2/f8gad, -3/-1/f8gad, -2/-1/f8gad,
    -1/1/f8stp2,
    0/1/f8stp1, 0/2/f8stp1, 0/2/f8stp1,
    1/3/white, 2/4/white, 3/4/white, 3/4/white}{
  \node[f8cell, fill=\c] at (1.05+\i*0.5,-1.70) {$\a$};
  \node[f8cell, fill=\c] at (1.05+\i*0.5,-2.20) {$\b$};
}
 
\draw[f8arr] (3.80,-2.50) -- (3.80,-2.95);
\node[f8lab, anchor=west] at (3.95,-2.725) {re-index};
 
\foreach \a/\b/\c [count=\i from 0] in {%
    1/2/f8gad, 1/2/f8gad, 1/3/f8gad, 2/3/f8gad,
    3/5/f8stp2,
    4/5/f8stp1, 4/6/f8stp1, 4/6/f8stp1,
    5/7/white, 6/8/white, 7/8/white, 7/8/white}{
  \node[f8cell, fill=\c] at (1.05+\i*0.5,-3.25) {$\a$};
  \node[f8cell, fill=\c] at (1.05+\i*0.5,-3.75) {$\b$};
}
 
\node[f8lab, anchor=south] at (3.80,-4.55) {$R$};
 
\draw[f8box] (3.00,-6.48) rectangle (7.50,-5.52);
\node[f8lab, anchor=south, font=\scriptsize\itshape] at (5.25,-5.52) {$G$};
 
\node[f8vtx, fill=f8gad]  (r1) at (0.55,-5.05) {$1$};
\node[f8vtx, fill=f8gad]  (r2) at (0.55,-6.95) {$2$};
\node[f8vtx, fill=f8stp2] (r3) at (2.25,-6.00) {$3$};
\node[f8vtx, fill=f8stp1] (r4) at (5.25,-8.20) {$4$};
\node[f8vtx, fill=white]  (r5) at (3.45,-6.00) {$5$};
\node[f8vtx, fill=white]  (r7) at (4.65,-6.00) {$7$};
\node[f8vtx, fill=white]  (r8) at (5.85,-6.00) {$8$};
\node[f8vtx, fill=white]  (r6) at (7.05,-6.00) {$6$};
 
\draw[f8e]   (r1) to[bend left=28]  (r2);
\draw[f8e]   (r1) to[bend right=28] (r2);
\draw[f8e]   (r1) -- (r3);
\draw[f8e]   (r2) -- (r3);
\draw[f8cut] (r3) -- (r5);
\draw[f8e]   (r4) -- (r5);
\draw[f8e]   (r4) to[bend left=18]  (r6);
\draw[f8e]   (r4) to[bend right=18] (r6);
\draw[f8e]   (r5) -- (r7);
\draw[f8e]   (r7) to[bend left=28]  (r8);
\draw[f8e]   (r7) to[bend right=28] (r8);
\draw[f8e]   (r8) -- (r6);
 
\foreach \x/\y/\c/\t in {%
    0.40/-8.90/f8gad/{\Cref{fig:cases} construction},
    4.00/-8.90/f8stp2/{$v_{n-k}$ (step 2)},
    0.40/-9.35/f8stp1/{step-1 vertex},
    4.00/-9.35/white/{vertices of $G$}}{
  \node[f8cell, fill=\c, minimum width=0.30cm, minimum height=0.30cm]
        at (\x,\y) {};
  \node[f8lab, anchor=west] at (\x+0.24,\y) {\t};
}
 
\end{tikzpicture}
 
\caption{The tableau completion of \Cref{lem:maxreg} and the graphs it encodes, for $T$ with rows $1\,2\,3\,3$ and $3\,4\,4\,4$. Reading the completed tableau left to right reads off the external regularization: columns $1$--$4$ are the \Cref{fig:cases} construction, column $5$ is the single edge from $v_{n-k}$ into $G$, columns $6$--$8$ are the $m=3$ edges from the step-1 external vertex into $G$, and columns $9$--$12$ are the edges of $G$ itself. The only cut-edge of $R$ is $(3,5)$, so the bridge forest of $R$ is a single path, as required by \Cref{cor:path}.}
\label{fig:tab}
\end{figure}

\begin{lemma}
\label{lem:bipartite}
    Let $G \in S(\ell,m,n)$ be connected, let $f$ be a cut-edge of $G$, and let $Y$ and $Z$ be the components of $G - f$. If $Z$ contains the vertices labeled $1$ and $n$, then $Y$ must be bipartite.
\end{lemma}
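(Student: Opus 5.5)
The plan is to build an explicit 2-coloring of $Y$ from the way $Z$ sits inside the linear order $1<2<\cdots<n$, using only the nonnesting property of $G\in S(\ell,m,n)$ and the fact that $Y$ and $Z$ are joined only through $f$.

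First, since $Z$ is connected and contains $1$ and $n$, I fix a path $P=(1=p_0,p_1,\dots,p_s=n)$ in $Z$. I will take $P$ to be a shortest such path, so that $P$ is induced in $G$. The closed intervals $[\min(p_{i-1},p_i),\max(p_{i-1},p_i)]$ of its edges form a connected chain from $1$ to $n$, so their union is all of $[1,n]$. Hence every vertex $w\notin P$ lies strictly inside the span of some edge of $P$.

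Next I record the local consequence of nonnesting for an edge $(u,v)$ of $Y$ with $u<v$. Both $u$ and $v$ lie outside $Z$, and in particular outside $P$.
\begin{itemize}
\item No edge $(a,b)$ of $P$ can satisfy $a<u<v<b$, since $(a,b)$ and $(u,v)$ would then nest.
\item By the covering property, the span of some edge of $P$ contains $u$ and some span contains $v$. Together with the first point, this forces at least one vertex of $P$ to lie strictly between $u$ and $v$.
\item No edge of $P$ has both endpoints strictly inside $(u,v)$, since it would then be nested inside $(u,v)$. So every $P$-vertex in $(u,v)$ has both of its $P$-neighbours outside $[u,v]$.
\end{itemize}

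With these facts, I define $c(w)$, for $w\in Y$, to be the parity of the number of vertices of $P$ smaller than $w$. The goal is then to show that exactly one vertex of $P$ lies in $(u,v)$ for every edge $(u,v)$ of $Y$. This makes $c$ a proper 2-coloring of $Y$, and hence $Y$ is bipartite.

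The main obstacle is the ``at most one'' direction. Suppose $p<p'$ are two $P$-vertices in $(u,v)$. All their $P$-neighbours lie outside $[u,v]$, so the path $P$ leaves $(u,v)$ and re-enters it. I would examine the $P$-edges at $p$ and $p'$ that exit to the left of $u$ or to the right of $v$. Any same-side pair, say $(a,p)$ and $(a',p')$ with $a,a'<u$, should either nest or allow a shortcut that contradicts the minimality of $P$. A pair on opposite sides (one edge going left, one going right) should nest with one of those edges or with $(u,v)$.

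If parity along $P$ alone does not suffice, I would fall back on two further ingredients. One is the cut-edge hypothesis: every vertex of $Y$ other than the endpoint of $f$ has all its neighbours in $Y$. The other is to refine $c$ to count all vertices of $Z$ below $w$, rather than only those of $P$. In either case, I would check that an odd cycle in $Y$ would force some edge of $Y$ to nest with an edge of $Z$.
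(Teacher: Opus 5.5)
\textbf{There is a genuine gap: the central claim of your plan is false.} You claim that for every edge $(u,v)$ of $Y$ exactly one vertex of $P$ lies strictly between $u$ and $v$. This fails, and so does the colouring $c$ built on it. The paper's own Example~\ref{ex:big} is a counterexample.
\begin{itemize}
\item There $Z$ is itself the path $1-2-4-8-5-9-11$, so $P$ is forced and taking a shortest path buys nothing.
\item For the edge $(3,7)$ of $Y$, both $4$ and $5$ lie in $(3,7)$.
\item The vertex $4$ has $P$-neighbours $2<3$ and $8>7$, while $5$ has both $P$-neighbours $8,9>7$.
\item The same-side pairs $(4,8),(5,8)$ and $(4,8),(5,9)$ share an endpoint or cross, so they do not nest. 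The opposite-side pairs such as $(2,4),(5,9)$ are disjoint, and there is no shortcut.
\end{itemize}
Consequently $c(3)=c(7)$, since $2$ and $4$ vertices of $P$ lie below them, so $c$ is not a proper colouring. Your fallback of counting all vertices of $Z$ below $w$ fails on the same edge, again giving $2$ versus $4$. Your first and third bullets, and the ``at least one'' conclusion, are correct. Only the ``at most one'' direction breaks, and it cannot be repaired.

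\textbf{The missing idea is to count edges of the path, not vertices.} The paper colours $y$ by the parity of $\nu(y)$, the number of path edges with both ends below $y$. Fix an edge $(y,y')$ of $Y$ and let $L,M,R$ be the vertices below $y$, strictly between $y$ and $y'$, and above $y'$. Your two nonnesting observations are exactly the paper's (i) and (ii): no path edge lies inside $M$, and none joins $L$ to $R$. It follows that $\nu(y')-\nu(y)$ is the number of path edges joining $L$ to $M$.

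Each visit of the path to $M$ is a single vertex, and it contributes $2$, $1$ or $0$ such edges according to whether its two path-neighbours lie both in $L$, on opposite sides, or both in $R$. Modulo $2$, only the visits with neighbours on opposite sides count. Now delete the $M$'s from the sequence of regions the path visits. By (ii), the $L/R$ label changes exactly at those visits. The path starts at $1\in L$ and ends at $n\in R$, so the number of changes is odd, which makes $\nu(y')-\nu(y)$ odd.

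Visits to $M$ with both neighbours on the same side, like the visit to $5$ above, genuinely occur. That is why only a parity statement, not ``exactly one'', can be proved. The argument also needs no minimality of the path.
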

\begin{proof}
    Fix a path $1 = z_0, z_1, \dots, z_N = n$ from $1$ to $n$ in $Z$ with edges $e_1,\dots, e_N$, where $e_t$ is the edge between $z_{t-1}$ and $z_t$. Define a function on vertices of $Y$ by
    \[
        \nu(y) := \#\{t : \text{both ends of $e_t$ are less than $y$}\}.
    \]
    Color $y$ by the parity of $\nu(y)$. In \Cref{ex:big}, we will use red for $\nu(y)$ odd and blue for $\nu(y)$ even. To show that $Y$ is bipartite, we need to check that every edge in $Y$ goes between vertices of opposite color.

    Suppose that there is an edge between $y$ and $y'$, with $y < y'$. Split the other vertices into sets
    \[
        L = \{x : x < y\}, \qquad M = \{x : y < x < y'\}, \qquad R = \{x : x > y'\}.
    \]
    From the fact that $G \in S(\ell,m,n)$, we know that
    \begin{enumerate}
        \item[(i)] no $e_t$ has both ends in $M$;
        \item[(ii)] no $e_t$ has one end in $L$ and the other in $R$.
    \end{enumerate}
    See \Cref{fig:LMR}

    It suffices to show that $\nu(y') - \nu(y)$ is odd. An edge counted by $\nu(y')$ but not by $\nu(y)$ is one whose large end lies in $M$. We know that the smaller end cannot also be in $M$ by (i), so it must lie in $L$. Hence
    \[
        \nu(y') - \nu(y) = \#\{t : e_t \text{ joins $L$ to $M$}\}.
    \]
    Read off the sequence of regions $L,M,R$ containing $z_0,z_1,\dots,z_N$. By (i) no two consecutive entries are $M$, so each visit to $M$ is a single vertex $z_t$ with $z_{t-1},z_{t+1} \in L \cup R$. We have that $z_t$ is joined to $L$ by two of the path's edges if $z_{t-1},z_{t+1}$ are both in $L$, by one if they lie on opposite sides, and by none if both lie in $R$. So mod 2, $\nu(y') - \nu(y)$ counts the visits to $M$ whose neighbors lie on opposite sides.

    Now read the symbols $L$ and $R$ off in order, skipping all occurrences of $M$. By (ii), the sequence changes between $L$ and $R$ exactly at those visits to $M$ whose neighbors lie on opposite sides. Since the path goes from vertex 1 to $n$, the sequence starts at $L$ and finishes at $R$. Hence $\nu(y') - \nu(y)$ is odd.
\end{proof}

\begin{figure}[h!]
\centering
\begin{tikzpicture}[x=1cm,y=1cm,>=Stealth,font=\small]

  \draw[->] (-0.4,0) -- (10.4,0);
  \fill (3,0) circle (2.4pt); \node[below=4pt] at (3,0) {$y$};
  \fill (7,0) circle (2.4pt); \node[below=4pt] at (7,0) {$y'$};
  \node at (1.2,-2.1) {$L$}; \node at (5,-2.1) {$M$}; \node at (8.8,-2.1) {$R$};
  \draw[very thick] (3,0) to[bend left=30] (7,0);
  \draw[dashed] (4.2,0) to[bend right=45] node[below=1pt]{(i)} (5.8,0);
  \draw[dashed] (1.4,0) to[bend right=26] node[below=2pt,pos=0.5]{(ii)} (8.6,0);
  \draw (2.0,0) to[bend left=32] (4.5,0);
  \draw (5.5,0) to[bend left=32] (8.0,0);
  \foreach \x in {1.4,2.0,4.2,4.5,5.5,5.8,8.0,8.6}{\draw[fill=white] (\x,0) circle (1.9pt);}
\end{tikzpicture}
\caption{The two configurations ruled out by $G\in S(\ell,m,n)$. The thick arc is the edge of $Y$ between $y$ and $y'$. A path edge $e_t$ with both ends in $M$ would be nested inside $yy'$, giving (i), and one with an end in $L$ and an end in $R$ would have $yy'$ nested inside it, giving (ii). Edges of the path from $L$ to $M$ and from $M$ to $R$, drawn solid, are allowed.}
\label{fig:LMR}
\end{figure}

\begin{example}\label{ex:big}
    Let $n=11$ and let $G$ have the eleven edges
    \[
      \underbrace{(2,3)}_{f},\qquad
      \underbrace{(3,6),\,(3,7),\,(6,10),\,(7,10)}_{\text{edges of }Y},\qquad
      \underbrace{(1,2),\,(2,4),\,(4,8),\,(5,8),\,(5,9),\,(9,11)}_{\text{edges of }Z},
    \]
    shown in Figure~\ref{fig:big}. In lexicographic order the two rows are
    \[
      (1,2,2,3,3,4,5,5,6,7,9)\qquad\text{and}\qquad (2,3,4,6,7,8,8,9,10,10,11),
    \]
    both non-decreasing, and every degree is at most $3$, so $G\in S(4,3,11)$. It is
    connected, and $f=(2,3)$ is its only edge between $\{3,6,7,10\}$ and the rest, so
    $f$ is a cut-edge with
    \[
      Y=\{3,6,7,10\},\qquad Z=\{1,2,4,5,8,9,11\}\ni 1,11 .
    \]
    
    Take the path $1-2-4-8-5-9-11$ in $Z$, so
    \begin{gather*}
        (z_0,\dots,z_6)=(1,2,4,8,5,9,11) \\
      e_1=(1,2),\ e_2=(2,4),\ e_3=(4,8),\ e_4=(5,8),\ e_5=(5,9),\ e_6=(9,11),
    \end{gather*}
    which doubles back at $z_3=8$. Counting the edges of the path lying entirely below
    each vertex of $Y$ gives
    \[
      \nu(3)=1,\qquad \nu(6)=2,\qquad \nu(7)=2,\qquad \nu(10)=5,
    \]
    so $3$ and $10$ are {\color{red}red} and $6$ and $7$ are {\color{blue}blue}. Each of
    the four edges of $Y$ joins a red vertex to a blue one, and the coloring recovers
    the bipartition $\{3,10\}\sqcup\{6,7\}$ of the $4$-cycle.
    
    \smallskip
    \emph{Consider the edge $(3,7)$.} Here $L=\{1,2\}$, $M=\{4,5,6\}$, $R=\{8,\dots,11\}$
    and the path visits $L,L,M,R,M,R,R$. The visit $z_2=4$ has neighbors $z_1=2\in L$
    and $z_3=8\in R$ on opposite sides and contributes the single edge $e_2$ from $L$ to
    $M$; the visit $z_4=5$ has both neighbors in $R$ and contributes none. So
    $\nu(7)-\nu(3)=1$.
    
    \smallskip
    \emph{Consider the edge $(6,10)$.} Here $L=\{1,\dots,5\}$, $M=\{7,8,9\}$,
    $R=\{11\}$ and the path visits $L,L,L,M,L,M,R$. Now the visit $z_3=8$ has
    \emph{both} neighbors in $L$, so it contributes the two edges $e_3,e_4$ from $L$ to
    $M$ without changing the $L/R$ symbol; the visit $z_5=9$ has neighbors on opposite
    sides and contributes $e_5$. So $\nu(10)-\nu(6)=3$, odd, and deleting the $M$'s from
    $L,L,L,M,L,M,R$ leaves $L,L,L,L,R$, which changes letter exactly once.
\end{example}
    
\begin{figure}[ht]
    \centering
    \begin{tikzpicture}[x=1.2cm,y=1cm,>=Stealth,font=\small,
        v/.style={circle,draw,inner sep=0pt,minimum size=6.4mm,fill=white}]
      \foreach \x in {1,2,4,5,8,9,11}{\node[v] (n\x) at (\x,0) {$\x$};}
      \node[v,draw=red,fill=red!14,text=red]   (n3)  at (3,0)  {$3$};
      \node[v,draw=blue,fill=blue!14,text=blue] (n6) at (6,0)  {$6$};
      \node[v,draw=blue,fill=blue!14,text=blue] (n7) at (7,0)  {$7$};
      \node[v,draw=red,fill=red!14,text=red]   (n10) at (10,0) {$10$};
      \draw[very thick,densely dashed] (n2) to[bend left=60] node[above=1pt]{$f$} (n3);
      \draw[thick] (n3) to[bend left=30]  (n6);
      \draw[thick] (n3) to[bend left=55]  (n7);
      \draw[thick] (n6) to[bend left=55]  (n10);
      \draw[thick] (n7) to[bend left=30]  (n10);
      \draw (n1) to[bend right=55] (n2);
      \draw (n2) to[bend right=40] (n4);
      \draw (n4) to[bend right=52] (n8);
      \draw (n5) to[bend right=22] (n8);
      \draw (n5) to[bend right=38] (n9);
      \draw (n9) to[bend right=45] (n11);
    \end{tikzpicture}
    \caption{The graph of Example~\ref{ex:big}. The cut-edge $f$ is dashed; the four
    edges of $Y$, forming the $4$-cycle $3-6-10-7-3$, are thick above the line; the six
    edges of the path $1-2-4-8-5-9-11$ inside $Z$ are below it. Vertices of $Y$ are
    colored by the parity of $\nu$: {\color{red}red} for odd, {\color{blue}blue} for
    even.}
    \label{fig:big}
\end{figure}

\begin{corollary}
\label{cor:path}
    Each tree in the bridge forest of an element of $S_{\reg}(m,n)$ is a path graph.
\end{corollary}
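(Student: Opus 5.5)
The plan is to combine \Cref{lem:bipartite} with a parity obstruction coming from regularity. The conclusion is that, in each connected component $C$ of $R \in S_{\reg}(m,n)$, every cut-edge separates the smallest label of $C$ from the largest label of $C$. It then follows that every edge of the tree $T_C$ lies on a single path, so $T_C$ is a path.

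\emph{Step 1: the case $m=1$ and a reduction to connected graphs.} If $m = 1$, each component of $R$ is a single edge, and its bridge tree is trivially a path. So assume $m \geq 2$. Fix a connected component $C$ of $R$ and relabel its vertices $1,\dots,|C|$ by an order-preserving bijection. The nonnesting condition is inherited by subgraphs and is preserved under order-preserving relabeling. Hence $C$ becomes a connected element of $S(\ell',m,|C|)$ for a suitable $\ell'$, and it is still $m$-regular. The bridge tree $T_C$ is unchanged by the relabeling.

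\emph{Step 2: no side of a cut-edge is bipartite.} Let $f$ be a cut-edge of $C$, and let $Y$ be a component of $C - f$. Every vertex of $Y$ has degree $m$ in $C$, and exactly one edge, namely $f$, leaves $Y$. Suppose $Y$ were bipartite with parts $A \sqcup B$. Count the edges of $Y$ by their endpoints in $A$ and then by their endpoints in $B$. This gives $m|A| - \epsilon = m|B| - (1-\epsilon)$, where $\epsilon \in \{0,1\}$ records whether the endpoint of $f$ in $Y$ lies in $A$. Then $m(|A|-|B|) = \pm 1$, which is impossible for $m \geq 2$. So neither component of $C - f$ is bipartite.

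\emph{Step 3: conclude.} By \Cref{lem:bipartite}, the component of $C-f$ not containing both $1$ and $|C|$ would have to be bipartite if the other one contained both. Step 2 rules this out, so $1$ and $|C|$ lie on opposite sides of every cut-edge $f$. In $T_C$, this says that every edge of the tree separates the block containing $1$ from the block containing $|C|$. Therefore every edge lies on the unique path between these two blocks. A tree all of whose edges lie on one path is that path, since every vertex of the tree is incident to some edge unless the tree is a single vertex. This proves \Cref{cor:path}.

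I expect the main point needing care to be Step 1. One must check that restricting $R$ to a component and relabeling keeps the nonnesting property, because \Cref{lem:bipartite} is stated for connected graphs whose extreme labels are $1$ and $n$. The parity count in Step 2 is the key idea but is routine once set up.
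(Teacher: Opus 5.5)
Your proof is correct and uses the same two ingredients as the paper's: \Cref{lem:bipartite}, together with the degree count showing that a side of a cut-edge in an $m$-regular graph cannot be bipartite when $m \geq 2$. The difference is only in organization: the paper argues by contradiction from a block incident to three cut-edges, one of which must have both $1$ and $n$ on that block's side, while you show directly that every cut-edge separates $1$ from $n$, which also identifies the ends of the path as the blocks containing the smallest and largest labels.
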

\begin{proof}
    The statement is trivial for $m=1$, so we assume that $m \geq 2$. We can assume that $G \in S_{\reg}(m,n)$ is connected since each component is isomorphic to an element of $S_{\reg}(m,n')$ for some $n' \leq n$. Suppose there is some branching in $T_G$. That is, there is a block $B$ and cut-edges $f_1,f_2,f_3$ adjacent to $B$. No matter where vertices $1$ and $n$ are located, one of the cut-edges $f_i$ will satisfy the conditions of \Cref{lem:bipartite}. So one of the components $Y$ of $G - f_i$ must be bipartite. Now, the vertex degrees in $Y$ are all $m$ except for the vertex adjacent to $f_i$ which has degree $m-1$ in $Y$. Suppose $Y$ were bipartite with parts $P,Q$, and suppose that $P$ is the part containing the vertex with degree $m-1$. Then we would have
    \[
        m|P| - 1 = \sum_{v \in P} \deg_Y v = \sum_{v \in Q} \deg_Y v = m|Q|,
    \]
    which is impossible for $m \geq 2$. Thus $T_G$ must be a path graph.
\end{proof}

\begin{theorem}
\label{thm:maxFac}
    For all $m \geq 3$, 
    \[
        S_{\max}(\ell,m,n) \subseteq S_{\max}(\ell,2,n) \cdot S_{\max}(\ell,m-2,n).
    \]
\end{theorem}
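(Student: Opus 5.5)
Take $T \in S_{\max}(\ell,m,n)$ with underlying graph $G$. By \Cref{lem:maxreg}, $T$ has an external regularization $R \in S_{\reg}(m,n')$, and $R$ is obtained from the completed tableau by the re-indexing of \Cref{fig:tab}. The plan is to find a 2-factor of $R$ and restrict it to $G$ as in \Cref{thm:extreg}. This gives $G = F|_G \cdot F|_G^c$ with $F|_G \in G(\ell,2,n)$ and $F|_G^c \in G(\ell,m-2,n)$. Two things must then be upgraded: the 2-factor must exist also when $m$ is odd, and both factors must be \emph{maximal} semistandard tableaux, not merely graphs.

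For existence, if $m$ is even we use \Cref{thm:2f}(1). If $m \geq 3$ is odd, \Cref{cor:path} says every component of $T_R$ is a path. So each component has at most $2 < m$ leaf-blocks, and \Cref{cor:2fblock} produces a 2-factor of $R$.

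For maximality, I would not take an arbitrary 2-factor. Instead I would choose one compatible with the tableau structure. The natural choice is to split the columns of the completed tableau of $R$: for each vertex $v$, take the first two occurrences of $v$ in the combined multiset of entries (reading rows appropriately) and build a 2-regular subtableau. The aim is that both the chosen 2-factor and its complement are again semistandard and $2$- resp. $(m-2)$-regular, i.e. lie in $S_{\reg}(2,n')$ and $S_{\reg}(m-2,n')$.

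Concretely, a semistandard two-row tableau of an $m$-regular graph corresponds to a word in which each label appears $m$ times. A column-subset whose entries form a 2-regular semistandard tableau should be extractable by a greedy, RSK-like rule: repeatedly pick the leftmost column whose entries still need degree. The existence statements above, via \Cref{cor:path} and \Cref{thm:2f}, would guarantee that the greedy rule does not get stuck. Restricting to $G$ then removes only the columns of the completion that lie to the left, so the restricted pieces remain semistandard. Since regularity determines the second row, the pieces are also maximal in the sense of having the largest possible second row given their first row and degree bounds. The edge counts are controlled exactly as in \Cref{thm:extreg}.

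The main obstacle is this maximality and semistandardness step. The graph-theoretic 2-factor theorems give some 2-factor but say nothing about nonnesting. I expect to need an exchange argument: if a 2-factor $F$ of $R$ creates a nesting pair $(a,d),(b,c)$ in $F$ or its complement, swap to $(a,c),(b,d)$ or $(a,b),(c,d)$. The swap must preserve degrees, keep the new edges inside $R$, and strictly decrease a nesting statistic. Making such swaps stay within the edge multiset of $R$ is delicate, and this is where the path structure of the bridge forest from \Cref{cor:path} should be used. If this fails, the fallback is to show only that $G \in G(\ell,2,n)\cdot G(\ell,m-2,n)$ and then expand each factor in the basis $S_{\max}$ of \Cref{lem:basis}. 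That weaker statement would still suffice for generation in degree one, but not for the set-level containment claimed here.
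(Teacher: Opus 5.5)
There is a genuine gap, and it is exactly the step you flag as the main obstacle. As written, you never show that the two pieces are semistandard and maximal. Your greedy selection (first two occurrences of each $v$, or the leftmost column still needing degree) is never shown to produce a 2-factor: \Cref{thm:2f} and \Cref{cor:path} only guarantee that \emph{some} 2-factor exists, and say nothing about whether a particular greedy selection gets stuck. The exchange argument is left open. Your fallback gives only $G \in G(\ell,2,n)\cdot G(\ell,m-2,n)$, not the containment in $S_{\max}(\ell,2,n)\cdot S_{\max}(\ell,m-2,n)$.

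The idea you are missing is that no special choice is needed. The nonnesting condition (no edges $(a,d),(b,c)$ with $a<b<c<d$) is inherited by every sub-multigraph. Equivalently, deleting \emph{any} set of columns of a two-row semistandard tableau leaves it semistandard; this is the same fact you already use when you cut off the left-hand columns. Hence an arbitrary 2-factor $F$ of $R\in S_{\reg}(m,n')$ lies in $S_{\reg}(2,n')$, and $R-F$ lies in $S_{\reg}(m-2,n')$. Both are maximal tableaux, because a regular semistandard tableau's second row uses up all remaining degree capacity, so the greedy construction reproduces it.

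Your justification of maximality for the restrictions also needs repair. ``Regularity determines the second row'' applies to $F$ and $R-F$, not to $F|_G$ and $(R-F)|_G$, which are not regular. The correct reason, which is the paper's, is that a column suffix of a maximal tableau is maximal. The external vertices receive the smallest labels, so every edge with an external endpoint has that endpoint in the first row and precedes all edges of $G$ lexicographically. Thus $F|_G$ is a column suffix of $F$, and vertices of $G$ occur in the first row only inside that suffix. The maximal second row is built greedily from right to left, and every candidate entry in a suffix column exceeds a vertex of $G$. So the greedy run on the suffix coincides with the last columns of the greedy run on $F$, which is $F$ itself.

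Therefore $F|_G\in S_{\max}(\ell,2,n)$, and likewise $(R-F)|_G\in S_{\max}(\ell,m-2,n)$. Combined with the edge counts from \Cref{thm:extreg} and your existence argument via \Cref{thm:2f} and \Cref{cor:path}, this is the paper's entire proof.
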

\begin{proof}
    For $G \in S_{\max}(\ell,m,n)$, there is an external regularization $R \in S_{\reg}(m,n')$ by \Cref{lem:maxreg}. By \Cref{cor:path} and \Cref{thm:2f}, $R$ has a 2-factor. The result then follows from \Cref{thm:extreg}, noting that the restriction of a 2-factor $F$ (resp. $(m-2)$-factor $R-F$) of $R$ to $G$ is contained in $S_{\max}(\ell,2,n)$ (resp. $S_{\max}(\ell,m-2,n)$) because of how $R$ was constructed: $F_G$ (resp. $G-F_G$) is a column suffix of $F$ (resp. $R-F$), and any column suffix of a maximal tableau is maximal.
\end{proof}

\subsection{Some additive relations}

For $a < b < c$, we have linear relations
\begin{align*}
    (x_a-x_b) + (x_b-x_c) &= (x_a-x_c) \\
    \Matching{3}{1/2} + \Matching{3}{2/3} &= \Matching{3}{1/3},
\end{align*}
and for $a < b < c < d$, we have quadratic relations
\begin{align*}
    (x_a-x_b)(x_c-x_d) + (x_a-x_d)(x_b-x_c) &= (x_a-x_c)(x_b-x_d) \\
    \Matching{4}{1/2,3/4} + \Matching{4}{1/4,2/3} &= \Matching{4}{1/3,2/4}
\end{align*}
The quadratic relation is nice because it keeps vertex degrees the same, so it can always be applied. The linear relation can only be applied if it is amenable to the vertex degrees in the global graph.

\begin{theorem}
\label{thm:degOne}
    $G(\ell,1,n) \cdot G(\ell,1,n)$ spans $G(\ell,2,n)$.
\end{theorem}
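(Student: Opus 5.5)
By \Cref{lem:span}, $\dim V_{2\Delta_{\ell,n}}=c(2\ell,2,n)$, and this equals the number of realizable first rows of elements of $G(\ell,2,n)$. Every element of $G(\ell,1,n)\cdot G(\ell,1,n)$ lies in $V_{2\Delta_{\ell,n}}$. So it suffices to exhibit, for each realizable first row $a=(a_1\le\dots\le a_d)$, one graph $G_a\in G(\ell,1,n)\cdot G(\ell,1,n)$ with first row $a$. The lex-leading monomial of $\delta_G$ is $\prod_i x_{a_i}$, which depends only on the first row. Hence the family $\{\delta_{G_a}\}$ is linearly independent, has $\dim V_{2\Delta_{\ell,n}}$ elements, and therefore spans. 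This is the same triangularity argument as in \Cref{lem:basis}.

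The first step is a combinatorial criterion for membership in $G(\ell,1,n)\cdot G(\ell,1,n)$. A graph $G$ with maximum degree $\le 2$ is a disjoint union of paths and cycles, where a double edge counts as a $2$-cycle. If $G$ has no odd cycles, it is properly $2$-edge-colorable, and each color class is a matching. We then balance the two classes. Even cycles and even paths split evenly. On each odd path we may choose which class receives the extra edge, and we alternate this choice across the odd paths. The two classes then differ in size by at most $1$, and since $d\le 2\ell$ each has at most $\ell$ edges. So every bipartite $G\in G(\ell,2,n)$ lies in $G(\ell,1,n)\cdot G(\ell,1,n)$.

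The second step, and the main obstacle, is to show that every realizable first row $a$ admits a bipartite realization with degrees $\le 2$ and $d\le 2\ell$ edges. The maximal tableau of \Cref{lem:basis} need not work: for $n=3$, the triangle with rows $(1,1,2),(2,3,3)$ is maximal. I would construct $G_a$ greedily, processing the entries of $a$ from largest to smallest. Each occurrence of $a_i$ is joined to some $b_i>a_i$ of remaining capacity, chosen to avoid closing an odd cycle. The natural candidate is to prefer endpoints already of degree $0$, so that partial components stay paths, and otherwise to prefer the endpoint of a partial path whose parity makes the new cycle even.

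Realizability of $a$ means, via a Hall-type count, that for each $v$ the number of entries $\ge v$ is at most the capacity $2\cdot\#\{w>v\}$ minus the multiplicity of $w$ in $a$, summed appropriately. I would show that whenever the greedy rule is forced to close an odd cycle, this slack is zero on a terminal segment. In that situation the segment itself is rigid, and one can check directly that it is a path or even-cycle arrangement. If this direct argument breaks down, the fallback is to handle odd cycles algebraically. One would apply the degree-preserving quadratic relation to an odd cycle together with an edge of another component, splitting it into graphs with fewer odd cycles, and induct on the number of odd cycles. An isolated triangle occupying all vertices only occurs in degenerate small cases ($n=3$, which forces $\ell\le 1$), and these can be checked by hand.
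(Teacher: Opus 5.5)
Your reduction is sound: by \Cref{lem:span} and the lex-leading-monomial argument, one graph of $G(\ell,1,n)\cdot G(\ell,1,n)$ per realizable first row would suffice. Your criterion is also correct: a bipartite graph is a product, and balancing the odd paths gives two matchings of size at most $\ell$.

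The gap is that the claim carrying your second step is false, because a realizable first row can force an odd cycle. Take $n=4$, $\ell=2$ and first row $(2,2,3)$. Vertex $1$ is isolated, and the one edge out of $3$ must go to $4$. So $3$ and $4$ can each absorb only one more edge, and the two edges out of $2$ must be $(2,3)$ and $(2,4)$. The only realization is the triangle on $\{2,3,4\}$. It lies in $G(2,2,4)$ (three edges, $\ell m=4$) but not in $G(2,1,4)\cdot G(2,1,4)$. This is exactly a zero-slack terminal segment that is rigid and odd, so the check you planned cannot succeed, and no choice of one bipartite graph per first row exists. The leading monomial $x_2^2x_3$ is reached only through cancellation. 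Writing $x_2-x_4=(x_2-x_1)+(x_1-x_4)$ expresses $\delta$ of the triangle as the difference of $\delta$ for the paths $1-4-3-2$ and $1-2-3-4$, and their common leading monomial $x_1x_2x_3$ cancels.

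So everything rests on your fallback, and as stated it does not close the argument. A graph whose only edges form a single odd cycle admits no quadratic relation with another edge. The triangle above, with vertex $1$ isolated, is such a graph, so this situation is not limited to an odd cycle through all vertices.

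The missing ingredient, which the paper uses, is the linear relation $(x_a-x_c)=(x_a-x_b)+(x_b-x_c)$ with $b$ a vertex off the cycle. It rewrites the cycle as two paths with the same number of edges and degrees $\le 2$. The existence of $b$ is exactly where $\ell\le\lfloor n/2\rfloor$ is used. For $n$ even an odd cycle misses a vertex, and for $n$ odd an $n$-cycle would have $n>2\ell$ edges. In particular, your ``$n=3$ triangle'' never lies in $G(\ell,2,3)$ for $\ell\le 1$; if it did, it would be a genuine counterexample.

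Your induction measure also needs adjusting. Merging an odd cycle with an even cycle (including a double edge) by a quadratic relation yields a single odd cycle again, so the number of odd cycles does not drop. Induct instead on the number of components that contain edges. This reduces every non-bipartite term to a lone odd cycle, which is where the linear relation finishes the proof.
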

\begin{proof}
    An element of $G(\ell,2,n)$ is in $G(\ell,1,n) \cdot G(\ell,1,n)$ if and only if it contains no odd cycles. A 2-edge-coloring of a disjoint union of paths and even cycles can be chosen with each part of size $\leq \lceil E/2 \rceil \leq \ell$. 
    
    Suppose we have some $G \in G(\ell,2,n)$ with an odd cycle. We will show that $G$ is a linear combination of elements of $G(\ell,2,n)$ with no odd cycles. Applying quadratic relations, we can reduce to the case that there is one odd cycle and no edges not contained in the cycle (see \Cref{fig:relations}). Then there must be a vertex not contained in the cycle (if $n$ is even, this is clear; if $n$ is odd, we make use of the assumption that $\ell \leq n/2$). Then applying a linear relation finishes the proof (see \Cref{fig:relations}).
\end{proof}

\begin{figure}[h!]
    \centering
    \begin{tikzpicture}
    \begin{scope}[every node/.style={circle,thick,draw}]
        \node (A1) at (-0.8660254,0){};
        \node (B1) at (0,.5){};
        \node (C1) at (0,-.5){};
        \node (D1) at (1,.5){};
        \node (E1) at (1,-.5){};
        \node (F1) at (1.8660254,0){};
    \end{scope}

    \begin{scope}[>={Stealth[black]},
                  every node/.style={fill=white},
                  every edge/.style={draw=black,very thick}]
        \path (A1) edge (B1);
        \path (B1) edge (C1);
        \path (A1) edge (C1);
        \path (D1) edge (E1);
        \path (E1) edge (F1);
        \path (D1) edge (F1);
    \end{scope}

    \node at (3,0) {$\in \operatorname{span} \Bigg\{$};

    \begin{scope}[every node/.style={circle,thick,draw}]
        \node (A2) at (5-.8660254,0){};
        \node (B2) at (5,.5){};
        \node (C2) at (5,-.5){};
        \node (D2) at (6,.5){};
        \node (E2) at (6,-.5){};
        \node (F2) at (6.8660254,0){};
    \end{scope}

    \node at (7.5,0) {,};

    \begin{scope}[>={Stealth[black]},
                  every node/.style={fill=white},
                  every edge/.style={draw=black,very thick}]
        \path (A2) edge (B2);
        \path (B2) edge (D2);
        \path (A2) edge (C2);
        \path (C2) edge (E2);
        \path (E2) edge (F2);
        \path (D2) edge (F2);
    \end{scope}

    \begin{scope}[every node/.style={circle,thick,draw}]
        \node (A) at (9-0.8660254,0){};
        \node (B) at (9,.5){};
        \node (C) at (9,-.5){};
        \node (D) at (10,.5){};
        \node (E) at (10,-.5){};
        \node (F) at (10.8660254,0){};
    \end{scope}

    \begin{scope}[>={Stealth[black]},
                  every node/.style={fill=white},
                  every edge/.style={draw=black,very thick}]
        \path (A) edge (B);
        \path (B) edge (E);
        \path (A) edge (C);
        \path (D) edge (C);
        \path (E) edge (F);
        \path (D) edge (F);
    \end{scope}

    \node at (11.5,0) {$\Bigg\}$};

    \begin{scope}[every node/.style={circle,thick,draw}]
        \node (B3) at (0,.5-2){};
        \node (C3) at (0,-.5-2){};
        \node (D3) at (1,.5-2){};
        \node (E3) at (1,-.5-2){};
        \node (F3) at (1.8660254,0-2){};
    \end{scope}

    \begin{scope}[>={Stealth[black]},
                  every node/.style={fill=white},
                  every edge/.style={draw=black,very thick}]
        \path (B3) edge (C3);
        \path (D3) edge (E3);
        \path (E3) edge (F3);
        \path (D3) edge (F3);
    \end{scope}

    \node at (3,0-2) {$\in \operatorname{span} \Bigg\{$};

    \begin{scope}[every node/.style={circle,thick,draw}]
        \node (B4) at (5,.5-2){};
        \node (C4) at (5,-.5-2){};
        \node (D4) at (6,.5-2){};
        \node (E4) at (6,-.5-2){};
        \node (F4) at (6.8660254,0-2){};
    \end{scope}

    \node at (7.5,0-2) {,};

    \begin{scope}[>={Stealth[black]},
                  every node/.style={fill=white},
                  every edge/.style={draw=black,very thick}]
        \path (B4) edge (D4);
        \path (C4) edge (E4);
        \path (E4) edge (F4);
        \path (D4) edge (F4);
    \end{scope}

    \begin{scope}[every node/.style={circle,thick,draw}]
        \node (B5) at (9,.5-2){};
        \node (C5) at (9,-.5-2){};
        \node (D5) at (10,.5-2){};
        \node (E5) at (10,-.5-2){};
        \node (F5) at (10.8660254,0-2){};
    \end{scope}

    \begin{scope}[>={Stealth[black]},
                  every node/.style={fill=white},
                  every edge/.style={draw=black,very thick}]
        \path (B5) edge (E5);
        \path (D5) edge (C5);
        \path (E5) edge (F5);
        \path (D5) edge (F5);
    \end{scope}

    \node at (11.5,0-2) {$\Bigg\}$};

    \begin{scope}[every node/.style={circle,thick,draw}]
        \node (B6) at (0,.5-4){};
        \node (D6) at (1,.5-4){};
        \node (E6) at (1,-.5-4){};
        \node (F6) at (1.8660254,0-4){};
    \end{scope}

    \begin{scope}[>={Stealth[black]},
                  every node/.style={fill=white},
                  every edge/.style={draw=black,very thick}]
        \path (D6) edge (E6);
        \path (E6) edge (F6);
        \path (D6) edge (F6);
    \end{scope}

    \node at (3,0-4) {$\in \operatorname{span} \Bigg\{$};

    \begin{scope}[every node/.style={circle,thick,draw}]
        \node (B7) at (5,.5-4){};
        \node (D7) at (6,.5-4){};
        \node (E7) at (6,-.5-4){};
        \node (F7) at (6.8660254,0-4){};
    \end{scope}

    \node at (7.5,0-4) {,};

    \begin{scope}[>={Stealth[black]},
                  every node/.style={fill=white},
                  every edge/.style={draw=black,very thick}]
        \path (B7) edge (D7);
        \path (E7) edge (F7);
        \path (D7) edge (F7);
    \end{scope}

    \begin{scope}[every node/.style={circle,thick,draw}]
        \node (B8) at (9,.5-4){};
        \node (D8) at (10,.5-4){};
        \node (E8) at (10,-.5-4){};
        \node (F8) at (10.8660254,0-4){};
    \end{scope}

    \begin{scope}[>={Stealth[black]},
                  every node/.style={fill=white},
                  every edge/.style={draw=black,very thick}]
        \path (B8) edge (E8);
        \path (E8) edge (F8);
        \path (D8) edge (F8);
    \end{scope}

    \node at (11.5,0-4) {$\Bigg\}$};
    
    \end{tikzpicture}
    \caption{The reduction in the proof of \Cref{thm:degOne}. The first row shows how two cycles can be combined into larger cycles using a quadratic relation. The second row shows how a cycle and an edge not in the cycle can be turned into paths using a quadratic relation. The third row shows how a cycle can be turned into paths using a linear relation if there is a vertex not contained in the cycle.}
    \label{fig:relations}
\end{figure}

We arrive at the main result of this section, which gives another proof of the rationality of $E_{\Delta_{\ell,n}}(t,q)$.

\thmgendegone*
\begin{proof}
    By \Cref{lem:basis}, $\bigcup_{m=1}^\infty S_{\max}(\ell,m,n)$ generates $\mathcal{H}_{\Delta_{\ell,n}}$. Then by \Cref{thm:maxFac}, $S_{\max}(\ell,1,n) \cup S_{\max}(\ell,2,n)$ generates $\mathcal{H}_{\Delta_{\ell,n}}$. Finally, by \Cref{thm:degOne}, $G(\ell,1,n)$, hence $S_{\max}(\ell,1,n)$, generates $\mathcal{H}_{\Delta_{\ell,n}}$. That is, $\mathcal{H}_{\Delta_{\ell,n}}$ is generated in $x_0$-degree 1.
\end{proof}

\section{Future directions}

We list a few avenues for future exploration.

\begin{itemize}
    \item There are two parts of \cite[Conjecture 5.5]{reinerrhoades} that we did not address for hypersimplices:
    \begin{itemize}
        \item $\mathcal{H}_{\Delta_{\ell,n}}$ is Cohen-Macaulay.
        \item The interior ideal $\overline{\mathcal{H}}_{\Delta_{\ell,n}}$ is the canonical module of $\mathcal{H}_{\Delta_{\ell,n}}$. We have that $\operatorname{int}(m\Delta_{\ell,n}) \cap \mathbb{Z}^n \cong \tope_{\ell m -n, m-2,n} \cap \mathbb{Z}^n$. So $\overline{\mathcal{H}}_{\Delta_{\ell,n}}$ is spanned by graphs in $G(\ell,m,n)$ with degrees $\leq m-2$ and number of edges $\leq \ell m - n$.
    \end{itemize}

    \item Hypersimplices are the matroid base polytopes of uniform matroids. Given recent developments studying the graded Ehrhart theory of polytopes arising from matroids \cite{crowley2026graded}, it would be interesting to find a unified, matroid-theoretic approach for matroid base polytopes.
    
    \item Computational data suggests that $\mathcal{H}_{\tope_{\ell,m,n}}$ is generated in $x_0$-degree 1. It may be possible to generalize our approach in \Cref{sec:harm} to prove this observation.

    \item It would also be interesting to determine a necessary and/or sufficient condition for $\mathcal{H}_\tope$ to be generated in $x_0$-degree 1. 
\end{itemize}

\appendix
\crefalias{section}{appendix}

\section{Factoring when $m$ is odd}
\label{app:modd}

Let $\hat\ell := \min\{\ell,\lceil n/2 \rceil\}$. Then
\[
    G(\ell,m,n) = G(\hat\ell , m ,n)
\]
because the number of edges in a graph with $n$ vertices and vertex degrees $\leq m$ is bounded by $mn/2 \leq m \lceil n/2 \rceil$.

The goal of this section is the following characterization:

\thmfullfact*
\begin{proof}
    Case (1) follows from \Cref{cor:even}.

    Suppose $m$ and $m'$ are both odd. If $\hat\ell = 0$, there are no edges, so equality is immediate. If $\hat\ell = 1$, then an element of $G(\ell,m + m',n)$ has at most $m + m'$ edges. We can split these edges into a set of size at most $m$ and a set of size at most $m'$, showing equality. If $\hat\ell \geq 2$, then the triangle with $(m + m')/2$ edges between every pair of vertices is an element of $G(\ell,m + m',n)$ which is not contained in $G(\ell,m,n) \cdot G(\ell,m',n)$. Any factorization forces $\deg_{G_1}(v) = m$ at all three vertices, but the degree sum $3m$ is odd, which is not possible. This proves case (2).

    For case (3), first suppose that $m < \frac{2\hat \ell - 3}{3}$, so $m \leq \frac{2\hat \ell - 5}{3}$. Then the graph in \Cref{fig:minimal} (possibly with some additional isolated vertices added) is an element of $G(\ell,m+m',n)$ that is not contained in $G(\ell,m,n) \cdot G(\ell,m',n)$. All vertices other than the central vertex $z$ have degree $m + m'$, so we must have $\deg_{G_1}(v) = m$ for all $v \neq z$. By parity, this fact forces $G_1$ to contain all $m + 1$ cut-edges to $z$. But then we have $\deg_{G_1}(z) = m+1 > m$, a contradiction. The number of edges in the graph in \Cref{fig:minimal} is 
    \[
        (m + 1)\cdot \frac{3(m + m') + 1}{2} = (m + m')\left(\frac{3(m+1)}{2} + \frac{m + 1}{2(m+m')} \right) \leq (m+m') \cdot \frac{3m + 5}{2} \leq \hat\ell(m + m')
    \]
    and the number of vertices is 
    \[
        3(m+1) + 1 = 3m + 4 \leq 2\hat\ell - 1 \leq n,
    \]
    so we do indeed get an element of $G(\ell,m + m',n)$.

    It remains to show that we have equality in case (3) when $m \geq \frac{2\hat\ell - 3}{3}$. It is sufficient to show that 
    \[
        G(\ell,2,n)\cdot G(\ell,M-2,n) = G(\ell,M,n)
    \]
    if $M - 2 \geq \frac{2\hat \ell - 3}{3}$ is odd. This will be proven in \Cref{thm:bound}.
\end{proof}

\begin{figure}[h!]
\centering
\begin{tikzpicture}
  \coordinate (c) at (0,-0.2);
  \coordinate (OL) at (-4.4,2.2); \coordinate (TL) at (-3.2,4.279); \coordinate (IL) at (-2.0,2.2);
  \coordinate (OR) at (4.4,2.2);  \coordinate (TR) at (3.2,4.279);  \coordinate (IR) at (2.0,2.2);
  \draw[e] (IL)--(c); \draw[e] (IR)--(c);
  \draw[e] (c)--(-0.85,1.9); \draw[e] (c)--(0.85,1.9);
  \node at (0,1.28){$\cdots$};
  \node[tag] at (0,2.75){$m+1$ branches};
  \draw[e] (OL)--(TL) node[mult]{$\lceil \frac{m + m'}{2}\rceil$};
  \draw[e] (TL)--(IL) node[mult]{$\lfloor \frac{m + m'}{2}\rfloor$};
  \draw[e] (OL)--(IL) node[mult]{$\lfloor \frac{m + m'}{2}\rfloor$};
  \draw[e] (OR)--(TR) node[mult]{$\lceil \frac{m + m'}{2}\rceil$};
  \draw[e] (TR)--(IR) node[mult]{$\lfloor \frac{m + m'}{2}\rfloor$};
  \draw[e] (OR)--(IR) node[mult]{$\lfloor \frac{m + m'}{2}\rfloor$};
  \foreach \p in {OL,TL,IL,OR,TR,IR,c} \node[vtx] at (\p){};
  \end{tikzpicture}
  \caption{An element of $G(\ell,m + m',n)$ that is not contained in $G(\ell,m,n) \cdot G(\ell,m',n)$ when $m < \frac{2\hat{\ell} - 3}{3}$ is odd and $m'$ is even.}
\label{fig:minimal}
\end{figure}

The following lemma is trivial but will allow us to make an important simplification later.

\begin{lemma}
\label{lem:addv}
    Consider the injection $\iota : G(\ell, m, n) \hookrightarrow G(\ell, m , n')$ for $n < n'$ obtained by adding $n' - n$ isolated vertices with labels $n + 1, \dots, n'$. We have
    \[
        \iota(G(\ell, 2,n)) \cdot \iota(G(\ell,m-2,n)) = \iota(G(\ell,m,n)) \cap (G(\ell,2,n') \cdot G(\ell,m-2,n')).
    \]
    In particular, to show that $G \in G(\ell,2,n) \cdot G(\ell,m-2,n)$, it suffices to show that $\iota(G)$ has an external regularization with a 2-factor for some $n' \geq n$.
\end{lemma}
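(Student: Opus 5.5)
The equality has two inclusions. The inclusion $\subseteq$ of the displayed identity I would get directly. If $G = \iota(G_1)\cdot\iota(G_2)$ with $G_1 \in G(\ell,2,n)$ and $G_2 \in G(\ell,m-2,n)$, then $G = \iota(G_1\cdot G_2)$. Since $G_1\cdot G_2 \in G(\ell,m,n)$, we get $G \in \iota(G(\ell,m,n))$. Also $\iota(G_1) \in G(\ell,2,n')$ and $\iota(G_2)\in G(\ell,m-2,n')$, because adding isolated vertices changes neither the degrees nor the number of edges.

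For $\supseteq$, I would take $H = \iota(G)$ with $G \in G(\ell,m,n)$ and suppose $H = H_1 \cdot H_2$ with $H_1 \in G(\ell,2,n')$ and $H_2 \in G(\ell,m-2,n')$. Every edge of $H_1$ or $H_2$ is an edge of $H$, counted with multiplicity. The vertices $n+1,\dots,n'$ are isolated in $H$, so no edge of $H_1$ or $H_2$ touches them. Hence $H_i = \iota(G_i)$, where $G_i$ is the restriction of $H_i$ to $\{1,\dots,n\}$. The degree bounds and edge-count bounds carry over unchanged, so $G_1\in G(\ell,2,n)$ and $G_2 \in G(\ell,m-2,n)$. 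Since $\iota$ is injective, $G = G_1\cdot G_2$, and this proves the identity.

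For the ``in particular'' statement, suppose $\iota(G)$ has an external regularization with a 2-factor, for some $n' \ge n$. Apply \Cref{thm:extreg} with $n'$ in place of $n$. This gives $\iota(G) \in G(\ell,2,n')\cdot G(\ell,m-2,n')$. Since also $\iota(G) \in \iota(G(\ell,m,n))$, the identity just proved shows that $\iota(G)$ lies in $\iota(G(\ell,2,n))\cdot\iota(G(\ell,m-2,n))$. Injectivity of $\iota$ then yields $G \in G(\ell,2,n)\cdot G(\ell,m-2,n)$.

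The only point needing care is that \Cref{thm:extreg} is stated for a generic $n$. Its proof uses $n$ only through the number of added external vertices. So I would check that the construction and the bound $k = \lfloor 2d/m\rfloor \le 2\ell$ hold verbatim with $n'$. They do, because $d \le \ell m$ does not depend on $n$. This is routine, so no step is a real obstacle.
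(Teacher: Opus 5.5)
Your proof is correct and is the direct verification the paper has in mind; the paper calls the lemma trivial and gives no proof. The key point is that $e_H(i,j)=e_{H_1}(i,j)+e_{H_2}(i,j)$ with nonnegative summands, so neither factor can touch the vertices $n+1,\dots,n'$, and the ``in particular'' then follows from \Cref{thm:extreg} applied to $\iota(G)\in G(\ell,m,n')$. Your final paragraph of caution is unnecessary: \Cref{thm:extreg} is stated for arbitrary $n$, so using it with $n'$ is simply an instance of the theorem.
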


\begin{proposition}
\label{thm:bound}
    If $m \geq \frac{2\hat\ell + 3}{3}$ is odd, then
    \[
        G(\ell,m,n) = G(\ell,2,n) \cdot G(\ell,m-2,n).
    \]
\end{proposition}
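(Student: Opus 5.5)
The plan is to reduce to \Cref{cor:2fblock}, via \Cref{lem:addv} and \Cref{thm:extreg}. Fix $G \in G(\ell,m,n)$ with $m$ odd and $m \geq \frac{2\hat\ell+3}{3}$. By \Cref{lem:addv}, I may first pad $G$ with isolated vertices. I will then construct one carefully chosen external regularization $R$ of $\iota(G)$ and show that every component of $T_R$ has fewer than $m$ leaf-blocks. \Cref{cor:2fblock} then gives a 2-factor of $R$, and \Cref{thm:extreg} gives $G \in G(\ell,2,n)\cdot G(\ell,m-2,n)$. The real content is the freedom in steps (1) and (2) of the external regularization: which deficient vertices of $G$ each external vertex $v_i$ is joined to.

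\textbf{Step 1: structure of leaf-blocks.} Let $B$ be a leaf-block of an $m$-regular loopless multigraph with $m$ odd. The degree sum inside $B$ is $m|B|-1$, and it must be even. So $|B|$ is odd, and since there are no loops, $|B| \geq 3$. Hence $B$ has at least $\frac{3m-1}{2}$ edges. The same bound holds for a full leaf-block of $G$ itself, because inside $G$ such a block looks exactly like a leaf-block of an $m$-regular graph.

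\textbf{Step 2: choose the attachments so that $R$ creates no new leaf-blocks.} I will make the edges from each external vertex spread over deficient vertices lying in different blocks or components of $G$ whenever possible. Each external vertex then closes cycles that merge the pieces it touches into a single block of $R$. I also always use the spare isolated vertices supplied by \Cref{lem:addv} as absorbers, so that the external vertices together with the gadget of \Cref{fig:cases} end up in one large 2-edge-connected piece. The target claim is: every leaf-block of $R$ is either a full leaf-block of $G$, or the unique block containing the external structure, which contributes at most one leaf per tree.

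Blocks of positive deficit, in particular the almost full leaf-blocks, receive an edge from an external vertex. That edge together with their existing cut-edge makes them non-leaves, or merges them into the external block. I expect this step to be the main obstacle. The delicate cases are components of $G$ with total deficit exactly $1$, and step (2) where $v_{n-k}$ has few edges to distribute. There I would first try ordering the deficient vertices by block and attaching greedily in a round-robin fashion, checking that each resulting external vertex has edges into at least two distinct pieces.

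\textbf{Step 3: counting.} Suppose some tree of $T_R$ had at least $m$ leaf-blocks. By Step 2, at least $m-1$ of them are disjoint full leaf-blocks of $G$, and each has at least $\frac{3m-1}{2}$ edges by Step 1. That forces $|E(G)| \geq (m-1)\frac{3m-1}{2}$. Compare this with $|E(G)| \leq \hat\ell m$, using the bound $G(\ell,m,n) = G(\hat\ell,m,n)$. The inequality $m \geq \frac{2\hat\ell+3}{3}$ should give $(m-1)\frac{3m-1}{2} > \hat\ell m$, which is a contradiction. The constant $+3$ in the hypothesis, rather than $-3$, is presumably exactly the slack needed to allow for the one extra leaf coming from the external structure. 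If the arithmetic turns out tight, I would additionally use the vertex bound: $3(m-1)$ vertices in full leaf-blocks plus the vertices hosting the external attachments must fit in $n \geq 2\hat\ell - 1$.
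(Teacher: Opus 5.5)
Your plan is to always build an external regularization that satisfies \Cref{cor:2fblock} and then apply \Cref{thm:extreg}. This cannot cover the whole range of the hypothesis, and the Step 3 inequality meant to rule out the bad cases is false.

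The hypothesis only gives $\hat\ell \le \frac{3m-3}{2}$, hence $|E(G)| \le \hat\ell m \le \frac{3m^2-3m}{2}$. But $(m-1)\frac{3m-1}{2} = \frac{3m^2-4m+1}{2}$ is smaller than this for every $m \ge 1$, and $3(m-1) \le n$ gives no contradiction either. The $+3$ is not slack. It is just the shift $m' = m-2$ from the odd/even case of the characterization theorem, where the bound is sharp.

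Concretely, take $\hat\ell = \frac{3m-3}{2}$, so $m = \frac{2\hat\ell+3}{3}$ exactly. Let $G$ be the graph of \Cref{fig:disjoint}: $m-1$ disjoint triangles with multiplicities $\lceil m/2\rceil, \lfloor m/2\rfloor, \lfloor m/2\rfloor$, padded by $p \ge 0$ isolated vertices.
\begin{itemize}
\item Each triangle has total deficit $1$, so in any external regularization $R$ its unique new edge is a cut-edge.
\item Here $k = 3m-4$, so there are exactly $p+1$ external vertices, and the single edge from $v_{n-k}$ to the gadget is also a cut-edge.
\item A 2-factor meets every edge cut in an even number of edges, so it avoids all cut-edges.
\item It would therefore restrict to a 2-factor of the bipartite graph between the $p+1$ external vertices and the $p$ padding vertices, which is impossible since the two sides would need equal degree sums $2(p+1)$ and $2p$.
\end{itemize}
So no external regularization of any padding of this $G$ has a 2-factor, even though $G$ does lie in $G(\ell,2,n)\cdot G(\ell,m-2,n)$. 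The same example refutes your Step 2 claim. Components of $G$ that are a single block of deficit $1$ (or $2$) necessarily become leaf-blocks of $R$ without being full leaf-blocks of $G$. Likewise, deficit-$1$ leaf-blocks can be merged by one external vertex only $m$ at a time.

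What is missing is the second half of the paper's argument. The paper builds the regularization so that a tree of $T_R$ has at most $\alpha + \lceil \beta/m\rceil + 1$ leaf-blocks, where:
\begin{itemize}
\item $\alpha$ counts full leaf-blocks plus single-block components of deficit $1$, each with at least $\frac{3m-1}{2}$ edges;
\item $\beta$ counts almost full leaf-blocks plus single-block components of deficit $2$, each with at least $m-1$ edges.
\end{itemize}
The edge bound then leaves only $(\alpha,\beta) \in \{(m-1,0),(m-2,1),(m-2,2)\}$, plus $(0,4)$ for $m=3$.

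For $(0,4)$ (four disjoint digons) a suitable regularization still works. In the cases with $\alpha \in \{m-1, m-2\}$, however, the paper abandons \Cref{thm:extreg} and writes down a factorization directly. For $(m-1,0)$ one takes $F$ to be the $m-1$ simple triangles, so $E(F) = 3m-3 = 2\hat\ell$ and $E(G\setminus F) \le \frac{3m^2-9m+6}{2} = \hat\ell(m-2)$. The case $(m-2,1)$ is analogous, and the residual $m=3$ configurations are handled by \Cref{fig:m=3}. Without a direct treatment of these extremal graphs, your argument cannot be completed.
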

\begin{proof}
The bound $m \geq \frac{2\hat\ell + 3}{3}$ translates to a bound on the number of edges of $G$ as a function of $m$:
\[
    E(G) \leq \hat\ell m \leq \frac{3m^2 - 3m}{2}.
\]

We begin by considering what happens to the bridge forest when we add edges from an external vertex. Suppose for simplicity that $G$ is connected and that we add edges from only a single vertex. Say we add edges going into blocks $B_1$ and $B_2$ in $T_G$. These added edges have the effect of collapsing the unique path between $B_1$ and $B_2$ into a single block in the bridge tree of the new graph. So if we were to add edges from a single vertex into every block with positive deficit, the resulting bridge tree would have as its leaves the full leaf-blocks of $G$ and possibly one more if the identification of all positive deficit blocks creates a leaf-block (\Cref{fig:bridge-contraction}).

\begin{figure}[ht]
  \centering
  \begin{tikzpicture}[
      blk/.style = {circle, draw, minimum size=6.5mm, inner sep=0pt, font=\small}, 
      ext/.style = {circle, draw, fill=black, minimum size=4pt, inner sep=0pt},     
      wl/.style  = {font=\small, inner sep=2pt},
      br/.style  = {thick},        
      nw/.style  = {dashed},       
      enc/.style = {draw, dashed, rounded corners, inner sep=5pt},
      tt/.style  = {font=\small},
    ]
    \begin{scope}
      \node[tt] at (0,2.3) {(A) collapsing the $B_1$--$B_2$ path};
      \node[blk] (B1) at (-1.95,0) {$B_1$};
      \node[blk] (a)  at (-0.65,0) {};
      \node[blk] (b)  at (0.65,0)  {};
      \node[blk] (B2) at (1.95,0)  {$B_2$};
      \node[blk] (c2) at (-1.0,1.25) {};
      \node[blk] (c3) at (1.0,1.25)  {};
      \draw[br] (B1)--(a)--(b)--(B2);
      \draw[br] (a)--(c2); \draw[br] (b)--(c3);
      \node[ext] (w) at (0,-1.55) {};
      \draw[nw] (w)--(B1); \draw[nw] (w)--(B2);
      \node[enc, fit=(B1)(a)(b)(B2)(w)] (box) {};
    \end{scope}
    \begin{scope}[shift={(8,0)}]
      \node[tt] at (0.1,2.3) {(B) the merged block is a leaf};
      \node[blk] (H)  at (0,0)       {};
      \node[blk] (H2) at (-1.4,0.3)  {};
      \node[blk] (L1) at (-2.5,0.95) {};
      \node[blk] (L2) at (-2.5,-0.5) {};
      \node[blk] (L3) at (-0.2,-1.4) {};
      \node[blk] (Q1) at (1.4,0)     {};
      \node[blk] (Q2) at (2.4,0.85)  {};
      \node[blk] (Q3) at (2.4,-0.85) {};
      \draw[br] (H)--(H2); \draw[br] (H2)--(L1); \draw[br] (H2)--(L2); \draw[br] (H)--(L3);
      \draw[br] (H)--(Q1); \draw[br] (Q1)--(Q2); \draw[br] (Q1)--(Q3);
      \node[ext] (w) at (3.1,0) {};
      \draw[nw] (w)--(Q2); \draw[nw] (w)--(Q3);
      \node[enc, fit=(Q1)(Q2)(Q3)(w)] (box2) {};
    \end{scope}
  \end{tikzpicture}
  \caption{The effect on the bridge forest of adding edges from a single external vertex. Circles are blocks of $G$, the lines between them are cut-edges, the solid dot is the external vertex, and the dashed box encloses the blocks that merge into a single block of the new graph. (A) The entire $B_1$--$B_2$ path is collapsed into one block. (B) When every block of positive deficit is hit, the merged block can itself be a leaf of the new bridge tree, causing a ``$+1$'' in the leaf-block count.}
  \label{fig:bridge-contraction}
\end{figure}

But for an external regularization, we are adding edges from multiple external vertices, and we must be careful about how we partition the total deficit among these vertices. Still assuming that $G$ is connected, each added vertex becomes part of the same block as all of the vertices of $G$ it connects to, as well as all paths between these vertices (except in the case when the last external vertex has only one edge into $G$ --- when the total deficit is $1 \mod m$).

Since we want to minimize the degrees of blocks in $T_R$, we begin by adding edges to leaf-blocks of $G$, as this will tend to collapse the most edges in $T_G$. Moreover, a block of $R$ containing an external vertex can only be a leaf-block if it also contains a leaf-block of $G$ (except possibly when the total deficit is $1 \mod m$). So by taking care of the leaf-blocks of $G$ first, we create at most $\lceil L_{+}/m \rceil + 1$ leaf-blocks in $R$, where $L_+$ is the number of leaf-blocks of $G$ with positive deficit and the $+1$ comes from the case that the total deficit is $\pm 1 \mod m$. 

But we can do better. If a leaf-block of $G$ has deficit $\geq 2$, then we can split up the added edges connecting to it across multiple external vertices. These external vertices then become part of the same block of $R$, so the number of possible leaf-blocks decreases. Hence we create at most $\lceil \#AFL(G)/m \rceil + 1$ leaf-blocks in $R$. The +1 comes from at most one of the following cases:
\begin{itemize}
    \item If the total deficit is $\pm 1 \mod m$, then the process in \Cref{fig:cases} adds a leaf-block.

    \item If $\#AFL(G) = 0$ and the identification of all blocks with positive deficit creates a leaf-block.
\end{itemize}
Suppose for contradiction that both cases occur: let $B$ be the leaf-block in $T_R$ arising from the second case. In order for $B$ to be a leaf-block, we must have a leaf-block with deficit $\geq 2$ in $T_G$ (using that $\#AFL(G) = 0$).
\begin{itemize}
        \item If the total deficit is $1 \mod m$, then we ensure that the leaf-block from \Cref{fig:cases} attaches to a leaf-block of $G$ with deficit $\geq 2$ (before attaching any other edges). This makes it so that $B$ is not a leaf-block. 

        \item If the total deficit is $-1 \mod m$, then the leaf-block from \Cref{fig:cases} already attaches to $B$, so $B$ cannot be a leaf-block.
    \end{itemize}

We now suppose that $G$ is not connected. Let $G_0 = \bigcup_{C \in C_0(G)} C$ be the union of the connected components of $G$ with total deficit 0, and let $G_+ = G \setminus G_0$. We begin by making $G_+$ connected by adding an edge from an external vertex to each connected component of $G$ with positive deficit. If there are more than $m$ components, then we continue with a second external vertex using a component with total deficit $\geq 2$ to bridge between them, and so on. This produces a connected graph because our bound on the edges forces there to be at most $m-1$ components with total deficit $1$ (a component with deficit $1$ contains at least $(3m-1)/2$ edges). If the number of edges added in this connection process is not a multiple of $m$, then we add additional vertices to $G$ and use \Cref{lem:addv}.

An added edge is a cut-edge iff it is the unique edge between an external vertex and a connected component of $G$. So all of our added edges at this point are cut-edges. It follows that we only create new full leaf-blocks (resp. almost full leaf-blocks) during this connection process when we connect to a component consisting of a single block with deficit 1 (resp. 2). We will denote the set of such components of $G$ as $C'_1(G)$ and $C'_2(G)$, respectively.

We then add the remaining edges to $G_+$ as we did for a connected graph. Then there are at most
\[
    \#FL(G) + \#C_1'(G) + \left\lceil \frac{\#AFL(G) + \#C_2'(G) }{m} \right\rceil + 1
\]
leaf-blocks in the resulting regular graph. 

The next step is to compute how many edges are necessary for each of the involved objects. Elements of $FL(G)$ and $C_1'(G)$ must contain at least $(3m-1)/2$ edges, and elements of $AFL(G)$ and $C_2'(G)$ must contain at least $m-1$ edges (we could get better bounds with a more careful analysis, but these will suffice for our purposes). 

Let $\alpha = \#FL(G) + \#C_1'(G)$ and $\beta = \#AFL(G) + \#C_2'(G)$. If the total number of edges in $G$ is $\leq 3m(m-1)/2$ and the external regularization procedure above produces at least $m$ leaf-blocks, then we must have
\[
    m - \lceil \beta/m\rceil - 1 \leq \alpha \leq \frac{(3m-2\beta)(m-1)}{3m-1}.
\]
For $m \geq 3$, the only non-negative integer solutions are $(\alpha,\beta) \in \{(m-1,0), (m-2,1), (m-2,2)\}$ and $(0,4)$ only in the case $m=3$. In the cases where $\beta \neq 0$, we must have that the total deficit is $\pm 1 \mod m$ justifying the +1 in the leaf-block count.

\begin{itemize}
    \item $\alpha = m-1$, $\beta = 0$. In this case we know that
    \[
        \frac{3m^2 - 4m + 1}{2} \leq E(G) \leq \frac{3m^2 - 3m}{2}.
    \]
    So $G$ is the graph in \Cref{fig:disjoint} possibly with additional vertices and at most $\frac{m-1}{2}$ additional edges. In particular, since $\frac{m-1}{2} \leq m-2$, all other vertices must have degree $\leq m-2$. The graph in \Cref{fig:disjoint} truly has no external regularization with a 2-factor. 
    
    We must have $\hat\ell = \frac{3m-3}{2}$. Consider the 2-factor $F$ consisting of $m-1$ disjoint triangles in $\alpha$. Then $G \setminus F$ has vertex degrees $\leq m-2$ and
    \begin{align*}
        E(F) &= 3m-3 = 2\hat\ell \\ 
        E(G \setminus F) &\leq \frac{3m^2-9m + 6}{2} = \hat\ell(m-2).
    \end{align*}

    \item $\alpha = m-2$, $\beta = 1$. The number of edges must be 
    \[
        \frac{3m^2 - 4m \pm 1}{2}
    \]
    (using that the total deficit is $\pm 1 \mod m$). So in addition to the 
    \[
        (m-2)\frac{3m-1}{2} + m-1 = \frac{3m^2-5m}{2}
    \]
    necessary edges for $\alpha$ and $\beta$, we have
    \[
        \frac{m \pm 1}{2}
    \]
    additional edges. In particular, for $m > 3$ the only vertices with degree $\geq m-1$ are those in the subgraph consisting of the necessary edges for $\alpha$ and $\beta$. Also we must have $\hat\ell = \frac{3m-3}{2}$. For such a graph, take $F$ to be the subgraph consisting of $m-2$ disjoint triangles in $\alpha$ and 2 of the edges from $\beta$. Then $G \setminus F$ has vertex degrees $\leq m-2$ and
    \begin{align*}
        E(F) &= 3m - 4 \leq 2\hat\ell \\
        E(G \setminus F) &= \frac{3m^2 - 10m + 8 \pm 1}{2} \leq \frac{3m^2 - 9m + 6}{2} \leq \hat\ell (m-2)
    \end{align*}

    For $m = 3$, it remains to consider the case where there are $2$ additional edges connected to a vertex not in $\alpha$ or $\beta$. There are four cases shown in \Cref{fig:m=3}. We include the case $\beta=2$ as well. A choice of $F$ is shown in red. One can check that $E(F) \leq 2\hat\ell = 6$ and $E(G \setminus F) \leq \hat\ell = 3$.

    \item $\alpha = m-2$, $\beta = 2$. Considering the number of edges, there is only one such $G$ (up to adding vertices) which has $m=3$ and was addressed in the previous case.

    \item $\alpha = 0$, $\beta = 4$. There is only one graph to consider consisting of four disjoint digons. There is no issue in constructing an external regularization for this graph.
\end{itemize}
\end{proof}

\begin{figure}[h!]
        \centering
        \begin{tikzpicture}
      \foreach \x in {-2.0,2.0}{
        \begin{scope}[xshift=\x cm]
          \coordinate (a) at (-1.2,2.079); \coordinate (b) at (1.2,2.079); \coordinate (d) at (0,0);
          \draw[e] (a)--(b) node[mult]{$\lceil m/2\rceil$};
          \draw[e] (a)--(d) node[mult]{$\lfloor m/2\rfloor$};
          \draw[e] (b)--(d) node[mult]{$\lfloor m/2\rfloor$};
          \foreach \p in {a,b,d} \node[vtx] at (\p){};
        \end{scope}
      }
      \node at (0,1.1){$\cdots$};
      \node[tag] at (0,0.45){$m-1$ copies};
    \end{tikzpicture}
    \caption{The graph in the case $\alpha = m-1$, $\beta = 0$ (recall $m$ is odd): $m-1$ disjoint copies of the triangle with edge multiplicities $\lceil m/2 \rceil, \lfloor m/2 \rfloor, \lfloor m/2 \rfloor$. Each component is a single block with $(3m-1)/2$ edges and has deficit $1$. Any external regularization therefore has at least $m$ leaf-blocks and, by \Cref{thm:2f}, no $2$-factor. The $2$-factor $F$ of the proof takes the triangle from each copy.}
    \label{fig:disjoint}
    \end{figure}

    \begin{figure}[h!]
        \centering
        \begin{tikzpicture}
    \begin{scope}[every node/.style={circle,thick,draw}]
        \node (A1) at (0,.866) {};
        \node (B1) at (1,.866) {};
        \node (C1) at (.5,0) {};
        \node (D1) at (1.5,0) {};
        \node (E1) at (2,.866) {};
        \node (F1) at (3,.866) {};
        \node (G1) at (2.5,0) {};
    \end{scope}

    \begin{scope}[every node/.style={circle,thick,draw}]
        \node (A2) at (0+5,.866) {};
        \node (B2) at (1+5,.866) {};
        \node (C2) at (.5+5,0) {};
        \node (D2) at (1.5+5,0) {};
        \node (E2) at (2+5,.866) {};
        \node (F2) at (3+5,.866) {};
        \node (G2) at (2.5+5,0) {};
    \end{scope}

    \begin{scope}[every node/.style={circle,thick,draw}]
        \node (A3) at (0,.866-2) {};
        \node (B3) at (1,.866-2) {};
        \node (C3) at (.5,0-2) {};
        \node (D3) at (1.5,0-2) {};
        \node (E3) at (2,.866-2) {};
        \node (F3) at (3,.866-2) {};
        \node (G3) at (2.5,0-2) {};
    \end{scope}

    \begin{scope}[every node/.style={circle,thick,draw}]
        \node (A4) at (0+5,.866-2) {};
        \node (B4) at (1+5,.866-2) {};
        \node (C4) at (.5+5,0-2) {};
        \node (D4) at (1.5+5,0-2) {};
        \node (E4) at (2+5,.866-2) {};
        \node (F4) at (3+5,.866-2) {};
    \end{scope}

    \begin{scope}[>={Stealth[red]},
                  every node/.style={fill=white},
                  every edge/.style={draw=red,very thick}]
        \path[bend right] (A1) edge (B1);
        \path (A1) edge (C1);
        \path (B1) edge (C1);
        \path[bend right] (E1) edge (F1);
        \path[bend right] (D1) edge (G1);

        \path[bend right] (A2) edge (B2);
        \path (A2) edge (C2);
        \path (B2) edge (C2);
        \path[bend right] (E2) edge (F2);
        \path (D2) edge (G2);

        \path[bend right] (A3) edge (B3);
        \path (A3) edge (C3);
        \path (B3) edge (C3);
        \path[bend right] (E3) edge (F3);
        \path (D3) edge (E3);

        \path[bend right] (A4) edge (B4);
        \path (A4) edge (C4);
        \path (B4) edge (C4);
        \path[bend right] (E4) edge (F4);
        \path (D4) edge (E4);
    \end{scope}

    \begin{scope}[>={Stealth[black]},
                  every node/.style={fill=white},
                  every edge/.style={draw=black,very thick}]
        \path[bend left] (A1) edge (B1);
        \path[bend left] (E1) edge (F1);
        \path[bend left] (D1) edge (G1);
        
        \path[bend left] (A2) edge (B2);
        \path[bend left] (E2) edge (F2);
        \path (D2) edge (C2);

        \path[bend left] (A3) edge (B3);
        \path[bend left] (E3) edge (F3);
        \path (D3) edge (G3);
 
        \path[bend left] (A4) edge (B4);
        \path[bend left] (E4) edge (F4);
        \path (D4) edge (C4);
    \end{scope}
    
    \end{tikzpicture}
    \caption{The four remaining configurations when $m=3$: the block from $\alpha$ with $(3m-1)/2 = 4$ edges, the digon(s) from $\beta$, and two additional edges meeting a vertex outside $\alpha$ and $\beta$. The additional edges form their own digon (top left, which also covers $\beta = 2$), hang off the block in $\alpha$ (top right), hang off the digon in $\beta$ (bottom left), or join the two (bottom right). A choice of $F$ is drawn in red.}
    \label{fig:m=3}
    \end{figure}

\begin{remark}
    The above construction is a little awkward in its handling of multiple connected components. One would like to simply construct external regularizations for each component separately, then take a 2-factor in each component. The issue that arises here is with the $\ell$ parameter. Consider the graph in $G(2,3,4)$ pictured in \Cref{fig:discon}. It has two possible choices of factors in $G(2,2,4)$ -- one with 3 edges and one with 4 edges. But an external regularization can only detect the one with 3 edges.

    Now consider the graph in $G(5,3,12)$ that is 3 disjoint copies of the previous graph. Because of the restrictions with $\ell = 5$, our factor in $G(5,2,12)$ must have exactly 10 edges. So we need to choose the factor with 3 edges in two of the components and the factor with 4 edges in the third. We have shown in \Cref{fig:disconfac} how our process above detects this.
\end{remark}

\begin{figure}[h!]
    \centering
    \begin{tikzpicture}
    \begin{scope}[every node/.style={circle,thick,draw}]
        \node (A1) at (1,1) {};
        \node (B1) at (0,0) {};
        \node (C1) at (2,0) {};
        \node (D1) at (1,-1) {};
        \node (A2) at (4,1) {};
        \node (B2) at (3,0) {};
        \node (C2) at (5,0) {};
        \node (D2) at (4,-1) {};
    \end{scope}

    \begin{scope}[>={Stealth[black]},
                  every node/.style={fill=white},
                  every edge/.style={draw=black,very thick}]
        \path (A1) edge (C1);
        \path (B1) edge (D1);
        \path (B2) edge (C2);
    \end{scope}
    \begin{scope}[>={Stealth[black]},
                  every node/.style={fill=white},
                  every edge/.style={draw=red,very thick}]
        \path (A1) edge (B1);
        \path (B1) edge (C1);
        \path (C1) edge (D1);
        \path (A2) edge (B2);
        \path (A2) edge (C2);
        \path (B2) edge (D2);
        \path (C2) edge (D2);
    \end{scope}
    
    \end{tikzpicture}
    \caption{An element of $G(2,3,4)$: a $4$-cycle with one diagonal. Its two factors in $G(2,2,4)$ are drawn in red: the $3$-edge path (left) and the $4$-cycle (right). Restricting a $2$-factor of an external regularization detects only the first.}
    \label{fig:discon}
\end{figure}

\begin{figure}[h!]
    \centering
    \begin{tikzpicture}
    \begin{scope}[every node/.style={circle,thick,draw}]
        \node (A1) at (1,1) {};
        \node (B1) at (0,0) {};
        \node (C1) at (2,0) {};
        \node (D1) at (1,-1) {};
        \node (A2) at (4,1) {};
        \node (B2) at (3,0) {};
        \node (C2) at (5,0) {};
        \node (D2) at (4,-1) {};
        \node (A3) at (7,1) {};
        \node (B3) at (6,0) {};
        \node (C3) at (8,0) {};
        \node (D3) at (7,-1) {};
        \node (E) at (4,2) {};
        \node (F) at (4,-2) {};
    \end{scope}

    \begin{scope}[>={Stealth[black]},
                  every node/.style={fill=white},
                  every edge/.style={draw=black,very thick}]
        \path (A1) edge (C1);
        \path (B1) edge (D1);
        \path (B2) edge (C2);
        \path (A3) edge (C3);
        \path (B3) edge (D3);
        \path (E) edge (A2);
        \path (F) edge (D2);
    \end{scope}
    \begin{scope}[>={Stealth[black]},
                  every node/.style={fill=white},
                  every edge/.style={draw=red,very thick}]
        \path (A1) edge (B1);
        \path (B1) edge (C1);
        \path (C1) edge (D1);
        \path (A2) edge (B2);
        \path (A2) edge (C2);
        \path (B2) edge (D2);
        \path (C2) edge (D2);
        \path (A3) edge (B3);
        \path (B3) edge (C3);
        \path (C3) edge (D3);
        \path (E) edge (A1);
        \path (F) edge (D1);
        \path (E) edge (A3);
        \path (F) edge (D3);
    \end{scope}
    
    \end{tikzpicture}
    \caption{The external regularization of the disjoint union of three copies of the graph in \Cref{fig:discon}, viewed in $G(5,3,12)$. The top and bottom external vertices each send one edge into every copy. A $2$-factor of the regularization is drawn in red. Its restriction takes the $4$-edge factor in the middle copy and the $3$-edge factor in the outer two, for the $10$ edges that $\ell = 5$ demands.}
    \label{fig:disconfac}
\end{figure}

\begin{remark}
    It is worth noting that there are two types of obstructions that can occur when trying to factor an element of $G(\ell,m,n)$. The first is a structural obstruction coming from cut-edges like the one in \Cref{fig:minimal}. The second occurs when the $\ell$ parameter is too tight. Consider for example the graph in \Cref{fig:disjoint} but with $m$ copies instead of $m-1$. If we consider this as an element of $G\left(\frac{3m-1}{2} , m, 3m \right)$, then it does not have a factorization because the only possible 2-factor has $3m > 3m-1$ edges. The construction above deals primarily with obstructions of the first kind because these are encountered first as one increases the number of edges.
\end{remark}

\bibliographystyle{amsalpha}
\bibliography{bib}

\end{document}